\documentclass[11pt]{article}
\usepackage{amssymb,latexsym,amsmath,amsbsy,amsthm,amsxtra,amsgen,graphicx,makeidx,dsfont,xcolor,bbm,amsfonts,tikz}
\usepackage[colorlinks]{hyperref}
\oddsidemargin=0in
\evensidemargin=0in
\topmargin=-.4in
\textheight=134ex
\textwidth=42em
\baselineskip=16pt

\newfont{\msbm}{msbm10 at 11pt}
\newcommand {\R} {\mbox{\msbm R}}

\newcommand {\N} {\mbox{\msbm N}}
\newcommand {\1} {\mathds{1}}

\newtheorem{theorem}{Theorem}[section]
\newtheorem{lemma}[theorem]{Lemma}

\newtheorem{corollary}[theorem]{Corollary}
\newtheorem{proposition}[theorem]{Proposition}

\newtheorem{definition}[theorem]{Definition}
\newtheorem{remark}[theorem]{Remark}

\def\eps{\varepsilon}
\def\Var{\textup{{\bf Var}}}
\DeclareMathOperator{\E}{\mathbf{E}}
\renewcommand{\P}{\mathbf{P}}
\newcommand{\dd}{\mathrm{d}}
\newcommand{\ind}[1]{\1_{\left\{#1\right\}}}
\newcommand{\indset}[1]{\mathbbm{1}_{#1}}
\renewcommand{\bar}[1]{\overline{#1}}
\renewcommand{\hat}[1]{\widehat{#1}}
\renewcommand{\epsilon}{\varepsilon}
\renewcommand{\epsilon}{\varepsilon}
\newcommand{\BBMabs}{BBM${}^+(-\rho)$}
\newcommand{\BBM}{BBM${}(-\rho)$}

\newcommand{\footremember}[2]{
    \footnote{#2}
    \newcounter{#1}
    \setcounter{#1}{\value{footnote}}
}

\begin{document}
\title{The Yaglom limit for branching Brownian motion with absorption and slightly subcritical drift}
\author{Julien Berestycki\footremember{JB}{University of Oxford, E-mail: julien.berestycki@stats.ox.ac.uk}
\and Jiaqi Liu\footremember{JL}{University of Pennsylvania, E-mail: liujiaqi@sas.upenn.edu}
\and Bastien Mallein\footremember{BM}{Institut de Mathématiques de Toulouse, UMR 5219, Université de Toulouse, UPS, F-31062 Toulouse Cedex 9, France. Email: {bastien.mallein@math.univ-toulouse.fr}}
\and Jason Schweinsberg\footremember{JS}{University of California San Diego, Email: jschweinsberg@ucsd.edu}
}
\maketitle

\begin{abstract}
Consider branching Brownian motion with absorption in which particles move independently as one-dimensional Brownian motions with drift $-\rho$, each particle splits into two particles at rate one, and particles are killed when they reach the origin.  Kesten \cite{kesten} showed that this process dies out with probability one if and only if $\rho \geq \sqrt{2}$.  We show that in the subcritical case when $\rho > \sqrt{2}$, the law of the process conditioned on survival until time $t$ converges as $t \rightarrow \infty$ to a quasi-stationary distribution, which we call the Yaglom limit.  We give a construction of this quasi-stationary distribution.
We also study the asymptotic behavior as $\rho \downarrow \sqrt{2}$ of this quasi-stationary distribution.  We show that the logarithm of the number of particles and the location of the highest particle are of order $\epsilon^{-1/3}$, and we obtain a limit result for the empirical distribution of the particle locations.
\end{abstract}


{\small MSC: Primary 60J80; Secondary: 60J65, 60J25

Keywords: Branching Brownian motion, Yaglom limit, quasi-stationary distribution}

\section{Introduction}

A binary branching Brownian motion is a particle system on the real line in which particles move according to independent Brownian motions, while splitting at rate $1$ into two children. This particle system is an archetypal example of a spatial branching process, as the behavior of each particle is independent of its past and of all the other particles alive at the same time.

We consider in this article a branching Brownian motion in $\R_+$ with drift $-\rho$ and absorption at $0$. In this process, each particle moves according to an independent Brownian motion with drift $-\rho$, starting from its current position. If at some time a particle hits the level $0$, it is killed and removed from the process. Additionally, each particle is associated with an independent exponential random time of parameter $1$.  At that time, if the particle is still alive and at position $x > 0$, it is killed and replace by two new particles starting from $x$. We call this process \BBMabs{}.

To construct the \BBMabs{}, we begin by constructing branching Brownian motion without absorption in which particles have drift $-\rho$.  For all $t \geq 0$, we denote by $\mathcal{N}_t$ the set of particles alive at time $t$.  For all $u \in \mathcal{N}_t$ and $s \leq t$, we write $X_s(u)$ for the position at time $s$ of the particle $u$, or its ancestor that was alive at that time.
For all $t \geq 0$, we define
\[
  \mathcal{N}_t^+ := \{ u \in \mathcal{N}_t : \inf_{s \leq t} X_s(u) > 0 \},
\]
which is the set of individuals that stayed in the positive real line up to time $t$.
The state of \BBMabs{} at time $t$ is then encoded by the point measure
\[
  {\bf X}_t := \sum_{u \in \mathcal{N}_t^+} \delta_{X_t(u)},
\]
where $\delta_x$ denotes the unit mass at position $x$. The process $({\bf X}_t, t \geq 0)$ is a Markov process on the set $\mathfrak{P}$ of finite point measures on $\R_+ = (0,\infty)$.

For each probability distribution $\mathcal{D}$ on $\mathfrak{P}$, we denote by $\P^\rho_\mathcal{D}$ the law of the \BBMabs{} starting with an initial condition of law $\mathcal{D}$.  For all $\nu \in \mathfrak{P}$, we write $\P_{\nu}^{\rho}$ for the law of \BBMabs{} started from $\nu$ (i.e. with initial distribution $\delta_{\nu}$).  We also write $\P_x^{\rho}$ for the law of \BBMabs{} started from one particle at $x$ (i.e. with initial distribution $\delta_{\delta_x}$).  We denote by $\E^\rho_\mathcal{D}$, $\E^{\rho}_{\nu}$, and $\E^\rho_x$ the corresponding expectations.

Branching Brownian motion with absorption was first studied in 1978 by Kesten \cite{kesten}, who showed that starting from one particle at $x > 0$, the process dies out almost surely if $\rho \geq \sqrt{2}$ and survives forever with positive probability if $\rho < \sqrt{2}$.  We say that the drift is critical if $\rho = \sqrt{2}$, subcritical if $\rho > \sqrt{2}$, and supercritical if $\rho < \sqrt{2}$. We denote the extinction time for the process by
\begin{equation}
  \label{eqn:defExtinction}
  \zeta := \inf\{ t \geq 0 : \mathcal{N}^+_t = \emptyset \} = \inf\{t \geq 0 : {\bf X}_t(\R^+) = 0\}.
\end{equation}

The asymptotic behaviour of the right tail of the distribution of $\zeta$ was studied by Harris and Harris \cite{hh07}, who showed that when $\rho > \sqrt{2}$, there is a positive constant $K_{\rho}$, depending on $\rho$ but not on $x$, such that
\begin{equation}\label{hhsurvival}
\P_x^{\rho}(\zeta > t) \sim \frac{K_{\rho}}{\sqrt{2 \pi} t^{3/2}} \, x e^{\rho x + (1 - \rho^2/2) t},
\end{equation}
where $\sim$ means that the ratio of the two sides tends to one as $t \rightarrow \infty$.  As a consequence, they deduced that
\begin{equation}
  \label{eqn:meanSize}
  \lim_{t \to \infty} \E^{\rho}_x\left({\bf X}_t(\R_+) \,\middle|\, \zeta > t\right) = \frac{2}{\rho^2 K_\rho}.
\end{equation}
Harris and Harris \cite{hh07} also observed that, as a consequence of arguments due to Chauvin and Rouault \cite{cr88}, there is a probability distribution $\pi^\rho$ on $\N$ with mean $\frac{2}{\rho^2 K_\rho}$ such that for all positive integers $j$, we have
\begin{equation}
  \label{eqn:defPiRho}
  \lim_{t \to \infty} \P^{\rho}_x\left( {\bf X}_t(\R_+) = j \,\middle|\, \zeta > t \right) = \pi^{\rho}(j).
\end{equation}
Our aim in this article to improve on this description of the law of ${\bf X}_t$ conditionally on $\zeta > t$, which we refer to as the Yaglom limit of the \BBMabs{}.  We focus on understanding the asymptotic behavior of this law as $\rho \downarrow \sqrt{2}$. Thus, the present work can be seen as a companion paper to \cite{EnCours} in which we study the asymptotic behavior of $\P_x(\zeta > t)$ as $\rho \downarrow \sqrt{2}$ and $t,x \to \infty$.

\subsection{The Yaglom limit of branching Brownian motion with absorption}

We first study minimal quasi-stationary distributions of the \BBMabs{}. We prove the existence of a unique such distribution as the Yaglom limit of the process. Let us start by defining these different terms.

Let $X = (X_t, t \geq 0)$ be a Feller Markov process on a metric state space $E \cup \{\partial\}$, with $\partial$ an absorbing state.  Let $T = \inf\{t > 0 : X_t = \partial\}$ be the first exit time of $E$ by $X$. We work here under the assumption that $E$ is an irreducible state space for $X$, meaning that for all $x, y \in E$ and $r > 0$, there exists $t > 0$ such that $\P_x(X_t \in B(y,r), T > t) > 0$, where $B(y,r)$ denotes the ball centered at $y$ with radius $r$.

In the context of this article, the Yaglom limit of $X$ is defined as the limit in distribution of the law of $X_t$ conditionally on $T > t$, provided this limit exists. In other words, we say that the probability distribution $\nu$ on $E$ is the Yaglom limit of $X$ if for all continuous bounded functions $f$ and $x \in E$, we have
\begin{equation}
  \label{eqn:defYaglomLimit}
  \lim_{t \to \infty} \E_x(f(X_t) | T > t) = \int_E f \dd \nu.
\end{equation}

On the other hand, a quasi-stationary distribution with parameter $\theta>0$ is defined as a probability distribution $\pi$ on $E$, such that if $X_0$ is started according to the law $\pi$, then the law of $X_t$ conditionally on $T > t$ is $\pi$, and
\begin{equation}\label{theta}
\P_\pi(T > t) = e^{-\theta t}.
\end{equation}
In other words, $\pi$ is a quasi-stationary distribution with parameter $\theta$ of $X$ if for all continuous bounded functions $f$ and $t > 0$, we have
\begin{equation}
  \label{eqn:defQuasiStationary}
  \E_\pi(f(X_t)\ind{T > t}) = e^{-\theta t}\int_E f \dd \pi.
\end{equation}
According to Theorem 2.2 of \cite{cmsm13}, every quasi-stationary distribution satisfies \eqref{theta} for some $\theta > 0$.
A \emph{minimal quasi-stationary distribution} of $X$ is defined, when it exists, as a quasi-stationary distribution with parameter $\mu$ such that no quasi-stationary distribution with parameter larger than $\mu$ exists.

We show that $({\bf X}_t, t\geq 0)$ admits a Yaglom limit, which is a minimal quasi-stationary distribution of the process. The state space we are working with is $\mathfrak{P} = \mathfrak{P}^* \cup \{0\}$, the set of finite point measures on $\R_+$, with $0$ representing the null point measure and $\mathfrak{P}^*$ consisting of all nonzero point measures. We remark that $0$ is an absorbing state for $({\bf X}_t, t\geq 0)$.
We endow the space $\mathfrak{P}$ with the topology of weak convergence, which means $\mu_n \to \mu$ in $\mathfrak{P}$ as $n \to \infty$ if $\int f \: d\mu_n \rightarrow \int f \: d\mu$
for all continuous bounded functions $f$. This topology makes $\mathfrak{P}$ a metrizable complete space.
The following result proves the existence of the Yaglom limit of $({\bf X}_t, t \geq 0)$.

\begin{proposition}
\label{prop:existenceYaglom}
For all $\rho > \sqrt{2}$, there exists a probability distribution $\mathcal{D}^\rho$ on $\mathfrak{P}^*$ such that for all continuous bounded functions $F : \mathfrak{P}^* \to \R$ and $\nu \in \mathfrak{P}^*$, we have
\begin{equation}
  \label{eqn:yaglomPointMeasure}
  \int F({\bf x})\mathcal{D}^\rho(\dd {\bf x}) = \lim_{t \to \infty} \E^\rho_\nu( F({\bf X}_t) \,|\, \zeta > t ).
\end{equation}
\end{proposition}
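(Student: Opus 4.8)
The plan is to reduce, via the branching property, to Laplace functionals and to a single initial particle. For a bounded continuous $\phi:\R_+\to[0,\infty)$ write $\langle{\bf x},\phi\rangle:=\int\phi\,\dd{\bf x}$, $u^\phi_t(x):=\E^\rho_x[e^{-\langle{\bf X}_t,\phi\rangle}]$ and $q_t(x):=\P^\rho_x(\zeta>t)$. Since $\{\zeta\le t\}=\{{\bf X}_t=0\}$ and $e^{-\langle 0,\phi\rangle}=1$, the independence of the subtrees rooted at the atoms of $\nu=\sum_j\delta_{y_j}\in\mathfrak{P}^*$ gives
\[
  \E^\rho_\nu\!\left[e^{-\langle{\bf X}_t,\phi\rangle}\,\middle|\,\zeta>t\right]
  =\frac{\prod_j u^\phi_t(y_j)-\prod_j(1-q_t(y_j))}{1-\prod_j(1-q_t(y_j))}.
\]
As the functionals ${\bf x}\mapsto e^{-\langle{\bf x},\phi\rangle}$ separate point measures, and pointwise convergence of Laplace functionals together with tightness yields weak convergence of the corresponding laws, it suffices to prove: (a) the family of conditional laws of ${\bf X}_t$ given $\zeta>t$, for $t\ge1$, is tight in $\mathfrak{P}^*$; and (b) for every such $\phi$, the ratio $w^\phi_t(x)/q_t(x)$, where $w^\phi_t:=1-u^\phi_t=\E^\rho_\cdot[(1-e^{-\langle{\bf X}_t,\phi\rangle})\ind{\zeta>t}]$, converges as $t\to\infty$ to a constant $\kappa(\phi)\in[0,1]$ not depending on $x$. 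Given (a) and (b), set $h(x):=xe^{\rho x}$ and $C(t):=K_\rho(2\pi)^{-1/2}t^{-3/2}e^{(1-\rho^2/2)t}$, so that $q_t\sim C(t)h$ by \eqref{hhsurvival}; then $w^\phi_t(y_j)\sim\kappa(\phi)C(t)h(y_j)$, and expanding both products in the display to first order gives $\E^\rho_\nu[e^{-\langle{\bf X}_t,\phi\rangle}\mid\zeta>t]\to 1-\kappa(\phi)=:L(\phi)$ for every $\nu\in\mathfrak{P}^*$. Hence $L$ is the Laplace functional of a probability measure $\mathcal{D}^\rho$ on $\mathfrak{P}^*$, and convergence of Laplace functionals plus tightness upgrades to the weak convergence asserted in \eqref{eqn:yaglomPointMeasure}.

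For (a) I would use the many-to-one lemma: for bounded continuous $g\ge 0$, $\E^\rho_\nu[\langle{\bf X}_t,g\rangle]=\sum_j P_tg(y_j)$, where $P_tg(x):=e^t\,\E_x[g(B_t)\ind{\tau_0>t}]$ is the first-moment semigroup of a Brownian motion $B$ with drift $-\rho$ killed at $0$ at time $\tau_0$. A Girsanov transform together with the fact that $t^{3/2}$ times the Dirichlet heat kernel of $B$ on $(0,\infty)$ converges to a multiple of $xy$ gives $P_tg(x)\sim\frac{2}{K_\rho}C(t)h(x)\int_0^\infty yg(y)e^{-\rho y}\,\dd y$, while $\P^\rho_\nu(\zeta>t)\sim C(t)\langle\nu,h\rangle$ by \eqref{hhsurvival}; hence
\[
  \E^\rho_\nu\!\left[\langle{\bf X}_t,g\rangle\,\middle|\,\zeta>t\right]\ \xrightarrow[t\to\infty]{}\ \frac{2}{K_\rho}\int_0^\infty yg(y)e^{-\rho y}\,\dd y .
\]
Choosing $g$ supported near $0$ or near $\infty$ controls escape of mass to the boundary of $(0,\infty)$, and $g\equiv1$ together with \eqref{eqn:meanSize} controls the total number of particles; thus the family is tight in $\mathfrak{P}^*$, and, the limiting mean measure $\frac{2}{K_\rho}ye^{-\rho y}\,\dd y$ being finite with positive total mass, no subsequential limit charges the null measure or loses mass at the boundary.

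The core of the argument is (b). The function $w_t:=w^\phi_t$ satisfies $0\le w_t\le q_t$ and solves the FKPP-type equation $\partial_t w=\tfrac12 w''-\rho w'+w-w^2$ on $(0,\infty)$ with Dirichlet condition $w_t(0)=0$ and initial datum $1-e^{-\phi}$, which lies below the datum $1$ producing $q_t$; the goal is to run the proof of \eqref{hhsurvival} with an arbitrary bounded datum, obtaining $w_t\sim\kappa(\phi)C(t)h$. Linearising near $0$ one uses that $h$ is an eigenfunction, $P_sh=e^{(1-\rho^2/2)s}h$; that the Doob $h$-transform of $P_s$ is the (Markov, transient) semigroup $Q_s$ of a three-dimensional Bessel process, since the drift $-\rho$ becomes $-\rho+h'/h=1/x$; and that parabolic regularity makes $g_t:=w_t/(C(t)h)$ locally equicontinuous. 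Applying the branching property at a fixed time $s$ gives $w_t=P_sw_{t-s}-\E^\rho_\cdot[R^{(t,s)}]$ with $0\le R^{(t,s)}\le\tfrac12\langle{\bf X}_s,w_{t-s}\rangle^2$, and dividing by $C(t)h$ one checks, using $C(t-s)/C(t)\to e^{(\rho^2/2-1)s}$ and that the correction term is negligible, that every locally uniform subsequential limit of $(g_t)$ is a bounded $Q$-harmonic function, hence a constant.

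The hard part, which I expect to be the main obstacle, is to show that this constant is subsequence-independent and to identify it — equivalently, that $w_t\sim\kappa(\phi)q_t$ with the correct $t^{-3/2}$ correction. The upper bound is soft: iterating $w_t\le P_sw_{t-s}$ gives $w_t\le P_{t-T}w_T$, and the heat-kernel asymptotics then yield $\limsup_t w_t(x)/(C(t)h(x))\le e^{(\rho^2/2-1)T}\frac{2}{K_\rho}\int_0^\infty yw_T(y)e^{-\rho y}\,\dd y$ for every $T$. The matching lower bound is the real work: one must control the FKPP nonlinearity, i.e.\ show that the rare particles far from the origin do not contribute at leading order to $\int_0^t P_{t-r}(w_r^2)\,\dd r$; the corresponding computation for a subcritical continuous-time Galton--Watson process shows that this nonlinear term genuinely shifts $\kappa(\phi)$, so a bare linearisation would give the wrong value. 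I would carry this out either through a renewal-type analysis of the half-line FKPP flow, parallel to the derivation of \eqref{hhsurvival}, or — to avoid computing $\kappa(\phi)$ by hand — by combining the already known convergence \eqref{eqn:defPiRho}, which is the case of constant $\phi$, with monotonicity of $\kappa(\phi)$ in $\phi$ and an approximation of $\phi$ by constants. With (a) and (b) in hand, the reduction of the first paragraph yields \eqref{eqn:yaglomPointMeasure}.
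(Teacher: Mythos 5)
Your scaffolding is sound and in part coincides with the paper: the reduction from a general $\nu\in\mathfrak{P}^*$ to a single initial particle via the branching property (only one subtree survives to time $t$ at leading order) is exactly how the paper passes from its single-particle result to Proposition~\ref{prop:existenceYaglom}, and your Laplace-functional reformulation and first-moment tightness argument are fine. But the proof has a genuine gap precisely where you flag it: step (b), the refined asymptotic $w^\phi_t(x)\sim\kappa(\phi)\,C(t)\,x e^{\rho x}$ with a subsequence-independent constant, is never established, and neither of your two proposed routes closes it. Route (ii) cannot work: \eqref{eqn:defPiRho} only gives the Yaglom law of the particle \emph{count}, i.e.\ $\kappa(\phi)$ for spatially constant $\phi$, and monotonicity of $\phi\mapsto w^\phi_t$ only sandwiches a general $\phi$ between $\inf\phi$ and $\sup\phi$; for a genuinely space-dependent $\phi$ this yields $\kappa(\inf\phi)\le\liminf\le\limsup\le\kappa(\sup\phi)$ and nothing more — the spatial structure of the limit, which is the whole content of the proposition beyond \eqref{eqn:defPiRho}, cannot be recovered from the count-only result. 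Route (i) ("rerun the derivation of \eqref{hhsurvival} with general bounded data") is not an argument but a restatement of the problem: identifying $\kappa(\phi)$, equivalently controlling the FKPP nonlinearity $\int_0^t P_{t-r}(w_r^2)\,dr$ at the $t^{-3/2}$ scale for arbitrary data, is exactly the hard analytic content, and your Liouville-type step only shows each subsequential limit of $w_t/(C(t)h)$ is constant, leaving the constants unidentified and possibly subsequence-dependent.

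The paper avoids this issue entirely by a different mechanism: it decomposes $\E^\rho_x(F(\mathbf{X}_t))$ over the identity of the right-most surviving particle, uses the spine change of measure (Proposition~\ref{prop:spineDecomposition}) to turn that particle's trajectory into a three-dimensional Bessel bridge carrying independent \BBMabs{} decorations (Lemma~\ref{EFDtLem}), and then lets $t\to\infty$ so the bridge becomes a Bessel process (Lemmas~\ref{lem:convergeofX} and~\ref{lem:yaglomLimit}). This yields the limit
$t^{3/2}e^{(\rho^2/2-1)t}\E^\rho_x(F(\mathbf{X}_t))\to x e^{\rho x}\sqrt{2/\pi}\int_0^\infty z e^{-\rho z}\,\E\bigl(F(\check{\mathbf{X}}^{z})\ind{\check{\mathbf{X}}^{z}((z,\infty))=0}\bigr)\,\dd z$ for \emph{all} bounded continuous $F$ at once, which is exactly the statement your step (b) would need (with $\kappa(\phi)$ given explicitly), and it simultaneously furnishes the explicit construction of $\mathcal{D}^\rho$ and a probabilistic formula for $K_\rho$. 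To complete your approach you would have to supply an analogue of this construction or a full renewal/Feynman–Kac analysis of the half-line FKPP flow with general data; as written, the central limit statement is assumed rather than proved.
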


The measure $\mathcal{D}^\rho$ is defined in Section~\ref{sec:spine} through a backward spinal construction of the \BBMabs{} seen from its right-most particle. This backward construction gives us a probabilistic interpretation of the constant $K_\rho$ appearing in \eqref{hhsurvival}, see Remark~\ref{rem:krho}. In particular, this allows the computation of $K_\rho$ by Monte Carlo simulations.

We next show that the Yaglom law $\mathcal{D}^\rho$ is also a minimal quasi-stationary distribution of \BBMabs{}. Similar results are known to hold for multiple examples of Markov chains, see \cite{cmsm13} or \cite{MV12} among others.

\begin{corollary}
\label{survivalcor}
Let $\rho > \sqrt{2}$. For all continuous bounded functions $F : \mathfrak{P}^* \to \R$, we have
\[
  \E_{\mathcal{D}^\rho}^{\rho} \left( F (\mathbf{X}_t) \ind{\zeta > t} \right) = e^{-(\rho^2/2 - 1)t} \int F(\bf x)\mathcal{D}^\rho(\dd \bf x).
\]
In particular, we have $$\P^{\rho}_{\mathcal{D}^{\rho}}(\zeta > t) = e^{-(\rho^2/2 - 1)t}$$ and
\[
 \E_{\mathcal{D}^\rho}^{\rho} \left( F(\mathbf{X}_t) \, | \, \zeta>t \right) =  \E_{\mathcal{D}^\rho}^{\rho} (F(\mathbf{X}_0)).
\]
Moreover, there exists no quasi-stationary distribution for \BBMabs{} with a larger parameter than $\rho^2/2 - 1$.
\end{corollary}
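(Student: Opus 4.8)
The plan is to deduce the corollary from Proposition \ref{prop:existenceYaglom} together with the Markov property, using the following standard bootstrapping argument for Yaglom limits. First I would fix a bounded continuous $F : \mathfrak{P}^* \to \R$ and $s > 0$, and analyze $\E^\rho_\nu(F(\mathbf{X}_{t+s}) \ind{\zeta > t+s} \,|\, \zeta > t)$ for an arbitrary starting measure $\nu \in \mathfrak{P}^*$. By the Markov property at time $t$, this equals $\E^\rho_\nu\big( G_s(\mathbf{X}_t) \,\big|\, \zeta > t \big)$ where $G_s(\mathbf{x}) := \E^\rho_{\mathbf{x}}\big( F(\mathbf{X}_s) \ind{\zeta > s} \big)$. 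Taking $t \to \infty$ and assuming $G_s$ is bounded continuous on $\mathfrak{P}^*$, Proposition \ref{prop:existenceYaglom} gives
\[
  \lim_{t \to \infty} \E^\rho_\nu\big( F(\mathbf{X}_{t+s}) \ind{\zeta > t+s} \,\big|\, \zeta > t \big) = \int G_s(\mathbf{x})\, \mathcal{D}^\rho(\dd \mathbf{x}) = \E^\rho_{\mathcal{D}^\rho}\big( F(\mathbf{X}_s) \ind{\zeta > s}\big).
\]
On the other hand, writing the left-hand side as $\lim_t \E^\rho_\nu(F(\mathbf{X}_{t+s})\ind{\zeta>t+s})/\P^\rho_\nu(\zeta>t)$ and inserting the factor $\P^\rho_\nu(\zeta>t+s)/\P^\rho_\nu(\zeta>t)$, the asymptotics \eqref{hhsurvival} give $\P^\rho_\nu(\zeta>t+s)/\P^\rho_\nu(\zeta>t) \to e^{-(\rho^2/2-1)s}$ (the polynomial and exponential prefactors in $x$ cancel since they are $t$-independent, and the $t^{-3/2}$ factors have ratio tending to $1$; for a general $\nu \in \mathfrak{P}^*$ one first reduces to a single initial particle by linearity of the first moment, or invokes the corresponding estimate for $\nu$). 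Combined with another application of Proposition \ref{prop:existenceYaglom} to $F$ itself, this yields $\E^\rho_{\mathcal{D}^\rho}\big( F(\mathbf{X}_s) \ind{\zeta > s}\big) = e^{-(\rho^2/2-1)s} \int F(\mathbf{x})\, \mathcal{D}^\rho(\dd\mathbf{x})$, which is the main identity. Taking $F \equiv 1$ gives $\P^\rho_{\mathcal{D}^\rho}(\zeta > s) = e^{-(\rho^2/2-1)s}$, and dividing the two identities gives the stationarity statement $\E^\rho_{\mathcal{D}^\rho}(F(\mathbf{X}_s)\,|\,\zeta>s) = \int F\, \dd\mathcal{D}^\rho = \E^\rho_{\mathcal{D}^\rho}(F(\mathbf{X}_0))$.

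For the final minimality assertion, suppose $\pi$ is a quasi-stationary distribution for \BBMabs{} with parameter $\theta$, so by \eqref{eqn:defQuasiStationary} and \eqref{theta} we have $\P^\rho_\pi(\zeta > t) = e^{-\theta t}$ and the law of $\mathbf{X}_t$ conditioned on $\zeta > t$ is $\pi$ for every $t$. I would show $\theta \le \rho^2/2 - 1$ by a first-moment comparison: since $\pi$ is supported on $\mathfrak{P}^*$, write $\P^\rho_\pi(\zeta > t) = \int \P^\rho_\nu(\zeta > t)\, \pi(\dd\nu)$, and use $\P^\rho_\nu(\zeta>t) \le \E^\rho_\nu(\mathbf{X}_t(\R_+))$, which by the many-to-one lemma equals $e^{(1-\rho^2/2)t} \E^\rho_\nu(\text{survival of a single Brownian particle in } \R_+)$ and is therefore $\le C_\nu e^{(1-\rho^2/2)t}$ with $C_\nu := \int x\,\nu(\dd x)$ finite for $\nu \in \mathfrak{P}^*$ (since $\pi$ assigns full mass to finite point measures, one can further argue $\int C_\nu\, \pi(\dd\nu) < \infty$, e.g. using that $\E_\pi(\mathbf{X}_0(\R_+)) < \infty$ for a quasi-stationary distribution, or truncating). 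Hence $e^{-\theta t} = \P^\rho_\pi(\zeta>t) \le \big(\int C_\nu\,\pi(\dd\nu)\big) e^{(1-\rho^2/2)t}$ for all $t$, forcing $\theta \ge \rho^2/2 - 1$ to fail unless $\theta \le \rho^2/2-1$; more precisely, dividing and letting $t \to \infty$ gives $\theta \ge \rho^2/2-1$ is impossible when strict, i.e. $\theta \le \rho^2/2 - 1$. Since $\mathcal{D}^\rho$ realizes the value $\rho^2/2-1$, it is a minimal quasi-stationary distribution and no quasi-stationary distribution with a strictly larger parameter exists.

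The main obstacle I anticipate is the continuity and boundedness of the map $\mathbf{x} \mapsto G_s(\mathbf{x}) = \E^\rho_{\mathbf{x}}(F(\mathbf{X}_s)\ind{\zeta>s})$ on $\mathfrak{P}^*$ with the weak topology: boundedness is immediate since $|F| \le \|F\|_\infty$, but continuity requires that the \BBMabs{} depend continuously (in distribution) on its initial point measure, which in turn needs some care because adding a particle near $0$ has a small but nonzero effect, and weak convergence of point measures $\nu_n \to \nu$ can move mass across particle-count thresholds. I would handle this either by a direct coupling of the \BBMabs{} started from $\nu_n$ and from $\nu$ (coupling the Brownian motions and branching times of the common particles, and controlling the contribution of the extra/displaced particles via the first-moment bound above, which is small when the displaced mass $\int x\, |\nu_n - \nu|(\dd x)$ is small), or by appealing to a Feller-type property of the process on $\mathfrak{P}$ if such a statement is available earlier in the paper. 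A secondary technical point is justifying the passage from a single initial particle to a general $\nu \in \mathfrak{P}^*$ in the ratio $\P^\rho_\nu(\zeta>t+s)/\P^\rho_\nu(\zeta>t) \to e^{-(\rho^2/2-1)s}$; this follows from the branching property, since $\P^\rho_\nu(\zeta>t) = 1 - \prod_{i}(1 - \P^\rho_{x_i}(\zeta > t))$ for $\nu = \sum_i \delta_{x_i}$, and each factor is governed by \eqref{hhsurvival}, but the cleanest route is to note that \eqref{hhsurvival}-type estimates give $\P^\rho_\nu(\zeta > t) \sim c_\nu t^{-3/2} e^{(1-\rho^2/2)t}$ for a constant $c_\nu \in (0,\infty)$, and take the ratio.
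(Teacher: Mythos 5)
Your derivation of the main identity follows the paper's proof essentially verbatim: decompose at time $t$ via the branching/Markov property, apply Proposition~\ref{prop:existenceYaglom} to $G_s(\mathbf{x})=\E^\rho_{\mathbf{x}}\big(F(\mathbf{X}_s)\ind{\zeta>s}\big)$, use \eqref{hhsurvival} to get $\P^\rho_x(\zeta>t+s)/\P^\rho_x(\zeta>t)\to e^{-(\rho^2/2-1)s}$, and apply Proposition~\ref{prop:existenceYaglom} once more; the continuity of $G_s$ that you flag is also used without further justification in the paper, and since Proposition~\ref{prop:existenceYaglom} is proved for arbitrary $\nu\in\mathfrak{P}^*$ your reduction step for general $\nu$ is not even needed. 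Taking $F\equiv 1$ and invoking \eqref{theta} then gives $\P^\rho_{\mathcal{D}^\rho}(\zeta>t)=e^{-(\rho^2/2-1)t}$ and the stationarity statement, exactly as in the paper.

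The minimality argument, however, has a genuine gap: your inequality points the wrong way. From $\P^\rho_\nu(\zeta>t)\le C_\nu e^{(1-\rho^2/2)t}$ and (assuming it) $\int C_\nu\,\pi(\dd\nu)<\infty$, you obtain $e^{-\theta t}\le C\,e^{-(\rho^2/2-1)t}$ for all $t$, and letting $t\to\infty$ this forces $\theta\ge\rho^2/2-1$, not $\theta\le\rho^2/2-1$; an upper bound on survival probabilities can only bound the QSD parameter from \emph{below}. To rule out parameters larger than $\rho^2/2-1$ one needs a pointwise \emph{lower} bound on survival: for every $\nu\in\mathfrak{P}^*$ with a particle at $x_1$, the branching property gives $\P^\rho_\nu(\zeta>t)\ge\P^\rho_{x_1}(\zeta>t)$, which by \eqref{hhsurvival} is of order $t^{-3/2}e^{-(\rho^2/2-1)t}$, so $\E^\rho_\nu(e^{\lambda\zeta})=\infty$ for every $\lambda>\rho^2/2-1$ and every $\nu\in\mathfrak{P}^*$. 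If a quasi-stationary distribution $\mathcal{E}$ with parameter $\mu>\rho^2/2-1$ existed, then $\P^\rho_{\mathcal{E}}(\zeta>t)=e^{-\mu t}$ would give $\E^\rho_{\mathcal{E}}(e^{\lambda\zeta})<\infty$ for $\lambda\in(\rho^2/2-1,\mu)$, hence $\E^\rho_\nu(e^{\lambda\zeta})<\infty$ for $\mathcal{E}$-a.e.\ $\nu$, forcing $\mathcal{E}(\mathfrak{P}^*)=0$, a contradiction. This is the paper's argument; note that it also avoids the integrability $\int C_\nu\,\pi(\dd\nu)<\infty$, which you cannot take for granted for an arbitrary QSD. (A minor further point: the many-to-one bound yields a constant like $\int e^{\rho x}\,\nu(\dd x)$ rather than $\int x\,\nu(\dd x)$, but this is immaterial next to the directional error.)
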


\subsection{Properties of the Yaglom limit as $\eps \rightarrow 0$}

Let $\nu$ be an arbitrary configuration in $\mathfrak{P}$ described by
\begin{equation}\label{nudef}
\nu = \sum_{i \in I} \delta_{x_i}
\end{equation}
of particles at the locations $(x_i, i \in I)$, where $I$ is a finite set of indices. We denote by $$N(\nu) = \nu(\R^+)= \# I$$ the number of particles in the configuration $\nu$, and by $$M(\nu) = \max_{i \in I} x_i = \sup\{a \geq 0: \nu([a, \infty)) > 0\}$$ the location of the right-most particle in $\nu$.

We describe here some of the properties of the quasi-stationary measure when $\rho > \sqrt{2}$, related to the number and configuration of particles.  These results can equivalently be interpreted as results for the behavior of the \BBMabs{} conditioned to survive for a long time. Preliminary results in this direction were obtained by Harris and Harris \cite{hh07}. Observe that \eqref{eqn:defPiRho} can be seen as a preliminary to Proposition~\ref{prop:existenceYaglom}, and $\pi^\rho$ is the image measure of $\mathcal{D}^\rho$ by $\nu \mapsto N(\nu)$. More precisely, let $D^{\rho}$ denote a point measure having law $\mathcal{D}^\rho$, we have
\[
  \P(D^\rho(\R_+) = j) = \pi^{\rho}(j) \quad \text{and} \quad \E(D^\rho(\R_+)) = \frac{2}{\rho^2 K_\rho},
\]
using \cite[Theorem 3]{cr88} to identify the first moment of $\pi^\rho$. These equalities can be compared to \eqref{eqn:meanSize} and \eqref{eqn:defPiRho}.

The primary aim of the present article is to study the asymptotic behavior of the Yaglom measure $\mathcal{D}^\rho$ as $\rho \to \sqrt{2}$.  That is, we consider the case of slightly subcritical drift, so that $$\rho = \sqrt{2} + \eps$$ with $\eps>0$ and $\eps \to 0$. Liu \cite{liu21} obtained upper and lower bounds for $K_{\rho}$ as $\rho \downarrow \sqrt{2}$, showing that there exist two constants $0 < C_1 < C_2 < \infty$ such that for $\eps > 0$ sufficiently small, we have
\begin{equation}\label{meanN}
e^{C_1/\sqrt{\eps}} \leq \E[D^{\rho}(\R_+)] \leq e^{C_2/\sqrt{\eps}}.
\end{equation}

We obtain here a much more precise result for the configuration $D^{\rho}$ of particles under the quasi-stationary distribution as $\rho \to \sqrt{2}$.
We show that $M(D^\rho)$ and $\log N(D^\rho)$ both are of the order $\epsilon^{-1/3}$ as $\epsilon = \rho - \sqrt{2} \to 0$. More precisely, we have the following result.

\begin{theorem}\label{NandM}
Let $V$ have an exponential distribution with mean one.  For BBM${}^+(-\rho$) started from the quasi-stationary distribution $\mathcal{D}^{\rho}$, as $\eps = \rho - \sqrt{2} \rightarrow 0$ we have the joint convergence in distribution
\[
  \left( \eps \zeta, \eps^{1/3} \log N({\bf X}_0), \eps^{1/3} M({\bf X}_0) \right) \Rightarrow \left( \frac{1}{\sqrt{2}} V, \frac{(3 \pi^2)^{1/3}}{2^{1/6}} V^{1/3}, \frac{(3 \pi^2)^{1/3}}{2^{2/3}} V^{1/3} \right).
\]
\end{theorem}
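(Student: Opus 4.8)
The plan is to separate the three coordinates in the limit. The first one is essentially free: by Corollary~\ref{survivalcor}, under $\P^\rho_{\mathcal{D}^\rho}$ the extinction time $\zeta$ is exponential with parameter $\rho^2/2-1=\sqrt2\,\eps+\eps^2/2$, so $\eps\zeta$ is exponential with parameter $\sqrt2+\eps/2\to\sqrt2$, and hence $\eps\zeta\Rightarrow\frac1{\sqrt2}V$. Everything then reduces to showing that the other two coordinates are, asymptotically, deterministic functions of $\eps\zeta$; concretely it suffices to prove that, in probability under $\P^\rho_{\mathcal{D}^\rho}$ as $\eps\to0$,
\begin{equation}\label{eqn:planShape}
  \eps^{1/3}\bigl(\log N({\bf X}_0)-\sqrt2\,M({\bf X}_0)\bigr)\longrightarrow 0
  \qquad\text{and}\qquad
  \eps\,\zeta-\tfrac{2\sqrt2}{3\pi^2}\,\eps\,M({\bf X}_0)^3\longrightarrow 0 .
\end{equation}
Indeed, the second relation gives $\eps^{1/3}M({\bf X}_0)=\bigl(\tfrac{3\pi^2}{2\sqrt2}\,\eps\zeta\bigr)^{1/3}+o_\P(1)$, the first then gives $\eps^{1/3}\log N({\bf X}_0)=\sqrt2\bigl(\tfrac{3\pi^2}{2\sqrt2}\,\eps\zeta\bigr)^{1/3}+o_\P(1)$, and feeding in $\eps\zeta\Rightarrow\frac1{\sqrt2}V$ together with the continuous mapping theorem produces the joint limit, using $\sqrt2\bigl(\tfrac{3\pi^2}{2\sqrt2}\cdot\tfrac1{\sqrt2}\bigr)^{1/3}=\tfrac{(3\pi^2)^{1/3}}{2^{1/6}}$ and $\bigl(\tfrac{3\pi^2}{2\sqrt2}\cdot\tfrac1{\sqrt2}\bigr)^{1/3}=\tfrac{(3\pi^2)^{1/3}}{2^{2/3}}$.

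For the first relation in \eqref{eqn:planShape} I would use the spinal description of $\mathcal{D}^\rho$ from Section~\ref{sec:spine}. Under that description the right-most particle of ${\bf X}_0$ is carried by the spine, so $M({\bf X}_0)$ is the spine's current location, and the remaining particles form independent copies of \BBMabs{} immigrated along the spinal trajectory and conditioned to stay below it. On the (typical) event that $M({\bf X}_0)$ is of order $\eps^{-1/3}$, the configuration near its upper edge should have the universal critical-front shape: a many-to-one computation, conditional on the spine, shows that the expected number of particles of ${\bf X}_0$ within distance $[a,a+1]$ of the maximum is of order $a\,e^{\sqrt2 a}$ for $1\ll a\ll M({\bf X}_0)$, whence $N({\bf X}_0)$ is of order $M({\bf X}_0)\,e^{\sqrt2 M({\bf X}_0)}$ up to factors polynomial in $M({\bf X}_0)$, and therefore $\log N({\bf X}_0)=\sqrt2\,M({\bf X}_0)+O\bigl(\log M({\bf X}_0)\bigr)=\sqrt2\,M({\bf X}_0)+o(\eps^{-1/3})$. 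The matching lower bound on $N({\bf X}_0)$ and the control of its fluctuations would come from a second moment estimate for the number of particles near the front.

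The core of the argument is the second relation in \eqref{eqn:planShape}, which ties the lifetime of the process to the cube of the initial height. The heuristic is that, once the cloud has reached height $M_0\asymp\eps^{-1/3}$, the extra drift $-\eps$ is negligible next to the Bramson $t^{1/3}$ correction, so the front $M_s$ obeys $\tfrac{\dd}{\dd s}M_s^3\approx-\tfrac{3\pi^2}{2\sqrt2}$ and the process dies around time $\tfrac{2\sqrt2}{3\pi^2}M_0^3$. To make this rigorous I would combine three ingredients: the sharp near-critical asymptotics of $\P^\rho_x(\zeta>t)$ proved in the companion paper~\cite{EnCours}; the branching identity $\P^\rho_\nu(\zeta>t)=1-\prod_{i\in I}\bigl(1-\P^\rho_{x_i}(\zeta>t)\bigr)$ valid for $\nu=\sum_{i\in I}\delta_{x_i}$; and the front profile of $\nu\sim\mathcal{D}^\rho$ from the previous paragraph. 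Together these should show that $\sum_{i\in I}\P^\rho_{x_i}(\zeta>t)$ jumps from $\gg1$ to $\ll1$ as $t$ crosses $\tfrac{2\sqrt2}{3\pi^2}M(\nu)^3$, hence $\P^\rho_\nu(\zeta>t)\to\ind{t<\tfrac{2\sqrt2}{3\pi^2}M(\nu)^3}$ for $\mathcal{D}^\rho$-typical $\nu$. Integrating this against $\mathcal{D}^\rho$ and using the exact identity $\P^\rho_{\mathcal{D}^\rho}(\zeta>t)=e^{-(\rho^2/2-1)t}$ from Corollary~\ref{survivalcor} then simultaneously identifies the limiting law of $\eps^{1/3}M({\bf X}_0)$ and establishes the claimed joint concentration.

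I expect the main obstacle to be precisely this last step. On the one hand one must capture the sharp constant $\tfrac{2\sqrt2}{3\pi^2}$, which ultimately comes from the critical constant $(3\pi^2)^{1/3}$ governing survival of branching Brownian motion below a barrier of the form $\sqrt2\,t-c\,t^{1/3}$, so that one has to keep track of the Bramson correction at leading order rather than merely up to constants. On the other hand — and this is the more delicate point — one must show that the fluctuations of $\log N({\bf X}_0)$, $M({\bf X}_0)$ and $\zeta$ around their deterministic profiles are genuinely negligible, that is $o(\eps^{-1/3})$, $o(\eps^{-1/3})$ and $o(\eps^{-1})$ respectively. This needs both the precision of the estimates in~\cite{EnCours} and a sufficiently uniform control of $\nu\mapsto\P^\rho_\nu(\zeta>t)$ over the configurations charged by $\mathcal{D}^\rho$, together with an a priori tail bound under $\mathcal{D}^\rho$ excluding atypically populous or atypically high initial configurations.
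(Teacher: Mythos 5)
Your reduction is sound and matches the paper's endpoint: the first coordinate is indeed immediate from Corollary~\ref{survivalcor}, and once one knows $\eps^{1/3}(\log N({\bf X}_0)-\sqrt2\,M({\bf X}_0))\rightarrow_p 0$ and $\eps\zeta-\tfrac{2\sqrt2}{3\pi^2}\eps M({\bf X}_0)^3\rightarrow_p 0$, the constants come out exactly as in the statement. The gap is that neither of these two relations is actually established, and the route you sketch for them glosses over precisely the point where the real work lies. For the shape relation, a ``many-to-one computation conditional on the spine'' under $\mathcal{D}^\rho$ must cope with the conditioning $\check{\mathbf{X}}^{Z}((Z,\infty))=0$ built into the construction of $\mathcal{D}^\rho$, and, more seriously, with the fact that moments under $\mathcal{D}^\rho$ are dominated by rare events: $\E[N(D^\rho)]$ is of order $e^{C/\sqrt{\eps}}$ while the typical value of $\log N(D^\rho)$ is only of order $\eps^{-1/3}$, so a plain first/second-moment argument for $N$ (even conditionally on $M$) does not concentrate. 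This is exactly why the paper does not work with ${\bf X}_0$ directly: it runs the process for a short time $s=\eps^{-5/6}$, truncates at the moving barrier $L_t(\cdot)$ so that $Z_t(0)\le 1/2$ tames the moments (Lemma~\ref{momlem}, Lemma~\ref{meanR}), obtains concentration for ${\bf X}_s$, and then uses quasi-stationarity (Corollary~\ref{survivalcor}) to transfer the statements back to ${\bf X}_0$. Your program would have to reproduce estimates of comparable strength for the Bessel-spine decoration conditioned to stay below $Z$, which is not easier and is not sketched.

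For the second relation, two further issues arise. First, the bulk density $2ye^{-\sqrt2 y}$ together with $\log N\approx\sqrt2 M$ is not enough to locate the extinction time: the sums $\sum_i\P^\rho_{x_i}(\zeta>t)$ (equivalently the $Z$-type functionals) are dominated by particles near the top, so you need the sinusoidal near-front profile with quantitative error bounds, i.e.\ essentially the content of Theorem~\ref{etachi} and the $\eta$-statistics, before the ``sharp crossing at $t\approx\tfrac{2\sqrt2}{3\pi^2}M^3$'' argument can be run; asserting the front is ``universal'' is not a proof. Second, your crossing argument leans on sharp asymptotics for $\P^\rho_x(\zeta>t)$ from \cite{EnCours}, uniformly in the regime $x\asymp\eps^{-1/3}$, $t\asymp\eps^{-1}$, and uniformly over $\mathcal{D}^\rho$-typical configurations; the paper deliberately avoids this dependence, getting $\zeta\approx T({\bf X}_0)$ instead from the critical-drift result Lemma~\ref{lemma2.11} of \cite{ms22}, transferred to drift $-\rho$ by the two-sided coupling of Section~\ref{couplesec}, after showing via Lemma~\ref{biglemma} that $M(D^\rho)$ is well below $L_{T(D^\rho)}$ so that the lemma applies. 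You yourself flag the uniform control of $\nu\mapsto\P^\rho_\nu(\zeta>t)$ and the fluctuation bounds as the main obstacle; as written, that obstacle is the proof, so the proposal is a plausible alternative programme rather than a complete argument.
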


This result is quite similar to Theorem 1.5 in \cite{ms22}, which applies to branching Brownian motion with absorption in the case of critical drift $\rho = \sqrt{2}$, conditioned to survive for an unusually long time.  Note that, because ${\bf X}_0$ has the same law as $D^{\rho}$ by definition, the convergence of the second and third coordinates can equivalently be written as $$(\eps^{1/3} \log N(D^{\rho}), \eps^{1/3} M(D^{\rho})) \Rightarrow ((3 \pi^2)^{1/3} 2^{-1/6} V^{1/3}, (3 \pi^2)^{1/3}2^{-2/3} V^{1/3}).$$

Corollary \ref{survivalcor} implies that under $\mathcal{D}^{\rho}$, the amount of time for which {BBM$^{+}(-\rho)$} survives is exponentially distributed with rate $(\rho^2/2) - 1$, which implies the convergence $\eps \zeta \Rightarrow V/\sqrt{2}$ as $\eps \rightarrow 0$.  Because the survival time is comparable to $\eps^{-1}$, particles drift to the left only a constant distance more over this time period than they would in the case of critical drift.  Therefore, we can apply results from \cite{ms22} for the case of critical drift to study {BBM$^{+}(-\rho)$} over this time period.
The key idea, which is also behind the results in \cite{ms22}, is that when the particles are in the quasi-stationary distribution, for the process to survive for an additional time $t$, the location of the right-most particle will be close to $ct^{1/3}$ and the logarithm of the number of particles will be close to $\sqrt{2} ct^{1/3}$, where $c = (3 \pi^2)^{1/3}/\sqrt{2}.$
In particular, if the process will survive for an additional time $V/(\sqrt{2} \eps)$, then the location of the right-most particle will be close to $2^{-1/6} c V^{1/3} \eps^{-1/3}$, and the logarithm of the number of particles will be close to $2^{1/3} c V^{1/3} \eps^{-1/3}$, which is consistent with Theorem \ref{NandM}.

One aspect of this result which may at first be surprising is that the logarithm of the number of particles is of the order $
\eps^{-1/3}$, while (\ref{meanN}) implies that the logarithm of the expected number of particles is comparable to $\eps^{-1/2}$.  The reason for this discrepancy is that the mean is dominated by rare events in which the number of particles is unusually large.  Indeed, if the logarithm of the number of particles were exactly $2^{1/3} c V^{1/3} \eps^{-1/3}$, then the expected number of particles would be
$$\int_0^{\infty} e^{2^{1/3} c s^{1/3} \eps^{-1/3}} \cdot e^{-s} \: ds,$$
and the integrand is maximized when $s$ is of the order $\eps^{-1/2}$, consistent with \eqref{meanN}.

We can also describe the empirical measure of the \BBMabs{} under its quasi-stationary distribution as $\rho \to \sqrt{2}$. The limit is the same as the one obtained in \cite{ms22} for the Yaglom limit of the BBM$^+(-\sqrt{2})$.  For $\nu \in \mathfrak{P}$ defined as in \eqref{nudef}, we define
$$\chi(\nu) = \frac{1}{N(\nu)} \sum_i \delta_{x_i} = \frac{\nu}{\nu(\R^+)}$$ and
$$\eta(\nu) = \bigg( \sum_i e^{\sqrt{2} x_i} \bigg)^{-1} \sum_i e^{\sqrt{2} x_i} \delta_{x_i/M(\nu)}.$$
We note that $\xi(\nu)$ is the empirical distribution of $\nu$ and $\eta(\nu)$ characterizes the empirical distribution of particles close to the right-most position in $\nu$. The following theorem shows that as the process approaches criticality from the subcritical case, under the Yaglom limit law, the empirical distribution has a density proportional to $ye^{-\sqrt{2}y}$ and the empirical distribution of particles close to the right-most position has a sinusoidal shape.

\begin{theorem}\label{etachi}
Let $\mu$ be the probability measure on $(0, \infty)$ with density $h_1(y) = 2ye^{-\sqrt{2} y}$, and let $\xi$ be the probability measure on $(0,1)$ with density $h_2(y) = \frac{\pi}{2} \sin(\pi y)$.  Then, as $\eps \rightarrow 0$, we have
$$\chi(D^{\rho}) \Rightarrow \mu \qquad\mbox{and}\qquad\eta(D^{\rho}) \Rightarrow \xi,$$ where $\Rightarrow$ denotes convergence in distribution for random elements in the Polish space of probability measures on $(0, \infty)$, endowed with the weak topology.
\end{theorem}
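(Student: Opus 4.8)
The plan is to deduce Theorem~\ref{etachi} from the corresponding empirical-measure statements for critical branching Brownian motion established in~\cite{ms22}, using Corollary~\ref{survivalcor} as the bridge between the subcritical Yaglom measure $\mathcal D^\rho$ and the picture of BBM${}^+(-\sqrt2)$ conditioned to survive for an unusually long time, and using the backward spinal description of $\mathcal D^\rho$ from Section~\ref{sec:spine} to organise the argument. By Corollary~\ref{survivalcor}, started from $\mathcal D^\rho$ the extinction time $\zeta$ is exponential with rate $\rho^2/2-1=\sqrt2\,\eps+\eps^2/2$, so $\eps\zeta$ is of order one, and conditioning on $\{\zeta=r\}$ the configuration $D^\rho=\mathbf X_0$ is distributed as that of \BBMabs{} seen a time $r$ before extinction. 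By memorylessness the relevant values of $r$ are of order $\eps^{-1}$, as already quantified by Theorem~\ref{NandM}, and over such a window the extra drift $-\eps$ displaces particles by only an $O(1)$ amount.

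First I would set up the comparison with the critical process quantitatively. Writing $Y_s(u):=X_s(u)+\eps s$ turns the underlying trajectories into Brownian motions with drift $-\sqrt2$, so \BBMabs{} run up to time $r$ becomes BBM with critical drift and unit branching rate, killed below the line $s\mapsto\eps s$, which over $[0,r]$ rises by only $\eps r=O(1)$ and has slope $\eps\to0$. The point I would argue is that the two empirical functionals are, to leading order as $\eps\to0$, insensitive to this slowly moving boundary: the tip profile read off by $\eta$ lives at scale $M\asymp\eps^{-1/3}$, so an $O(1)$ perturbation of the boundary is negligible after rescaling positions by $M$; and the bulk measure $\chi$ is governed by the behaviour of the process near the absorbing boundary, where a boundary drift of size $\eps$ vanishes in the limit. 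Making this precise — via a Girsanov/coupling comparison of \BBMabs{} conditioned on $\{\zeta>r\}$ with critical BBM conditioned to survive to time $r$, showing that the relevant Radon--Nikodym density is $1+o(1)$ on the events that carry the mass — is where most of the effort goes.

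Granting this, I would invoke~\cite{ms22}: for BBM${}^+(-\sqrt2)$ conditioned to survive for a long time $r$, the bulk empirical measure converges to $\mu$ (density $2ye^{-\sqrt2 y}$) and the tip empirical measure to $\xi$ (density $\tfrac\pi2\sin(\pi y)$), with $M\sim cr^{1/3}$ and $\log N\sim\sqrt2\,cr^{1/3}$, $c=(3\pi^2)^{1/3}/\sqrt2$. Since these limits do not depend on $r$, integrating over the rescaled exponential law of $\eps\zeta$ does not affect them: if $\langle\chi(\mathbf X_0),f\rangle\to\langle\mu,f\rangle$ in probability conditionally on $\{\eps\zeta=a\}$, uniformly for $a$ in compact subsets of $(0,\infty)$, then the convergence also holds unconditionally, and likewise for $\eta$ against any bounded continuous $g$. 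To upgrade these one-dimensional convergences to convergence in distribution in the Polish space of probability measures on $(0,\infty)$ with the weak topology, I would establish tightness of $\{\chi(D^\rho)\}_\eps$ and $\{\eta(D^\rho)\}_\eps$ — equivalently, that no mass escapes towards $0$ or $\infty$ — and then identify every subsequential limit by testing against a convergence-determining family. Tightness of $\chi$ should come from a many-to-one first-moment bound on $\E[\chi(D^\rho)((0,a])]$ and $\E[\chi(D^\rho)([b,\infty))]$, uniform in $\eps$, for $a$ small and $b$ large.

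The main obstacle is twofold and intertwined: making the critical/subcritical comparison sharp enough to control simultaneously the bulk (scale $1$, where $\chi$ lives) and the tip (scale $\eps^{-1/3}$, where $\eta$ lives); and proving tightness of the tip measure $\eta(D^\rho)$, i.e.\ ruling out an anomalous build-up of particles just below the maximum or near the origin on the $M$-scale. Both rely on the delicate ballot-type barrier estimates and truncated first/second-moment computations for near-critical BBM that underpin~\cite{ms22} and the companion paper~\cite{EnCours}, and the bulk of the work will be transporting those to the slightly subcritical regime and combining them with the spinal representation of $\mathcal D^\rho$.
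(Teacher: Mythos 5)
There is a genuine gap at the heart of your plan: the step where you condition on $\zeta\approx r$ with $r\asymp\eps^{-1}$ and claim that, after the change of variables $Y_s(u)=X_s(u)+\eps s$, the law of $D^{\rho}$ given its residual lifetime is that of critical BBM conditioned to survive to time $r$ ``up to a Radon--Nikodym density $1+o(1)$ on the events that carry the mass.'' Over a time window of order $\eps^{-1}$ the Girsanov factor for a \emph{single} particle trajectory is already of order $e^{\Theta(1)}$, not $1+o(1)$, and the configuration carries $e^{\Theta(\eps^{-1/3})}$ particles, so there is no regime in which the full change of measure is a negligible tilt; the coupling of Section~\ref{couplesec} only gives stochastic domination of point configurations, which does not control \emph{normalized} empirical measures under conditioning on survival. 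A second, related problem is that the results of \cite{ms22} you want to invoke are statements about a process started from a \emph{given} initial condition and conditioned to survive until a large time, whereas the object you need to handle is the unknown law $\mathcal{D}^{\rho}$ conditioned on its future extinction time; you never explain how to bring $\mathcal{D}^{\rho}$ into a form to which those theorems apply, and this is precisely the nontrivial content of the theorem.

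The paper avoids the conditioning-on-$\zeta$ decomposition altogether. It runs the process from $\mathcal{D}^{\rho}$ for a mesoscopic time $s=\eps^{-5/6}$, chosen so that $\eps s\to 0$ (which makes the drift correction harmless at the level of the single-particle density in the strip, see \eqref{qGirsanov}) while $s\gg T({\bf X}_0)^{2/3}$ (so the sine/exponential profile has emerged). Conditional on the initial configuration, Lemma~\ref{momlem} gives first and second moments of the additive functionals defining $\chi$ and $\eta$ for the process truncated at the moving barrier $L_t(\cdot)$, with the initial configuration entering only through $Z_t(0)$ and $Y_t(0)$; the hypotheses are verified using Lemmas~\ref{T0upper}, \ref{M0bound} and \ref{biglemma} (via Skorokhod representation), Chebyshev gives concentration, and Lemma~\ref{meanR} removes the truncation. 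Finally, quasi-stationarity (Corollary~\ref{survivalcor}) is the bridge back to time $0$: since $\P^{\rho}_{\mathcal{D}^{\rho}}(\zeta>s)\to 1$ and the law of ${\bf X}_s$ given $\zeta>s$ is again $\mathcal{D}^{\rho}$, the statements proved at time $s$ hold verbatim for $D^{\rho}$, and Kallenberg's Theorem 16.16 upgrades convergence of integrals of bounded continuous test functions to convergence in distribution of the random measures. Incidentally, your concern about tightness of $\eta(D^{\rho})$ is largely moot: after rescaling by $M$ it is a random probability measure on $[0,1]$, and the route through bounded continuous test functions plus Kallenberg requires no separate tightness argument for $\chi$ either.
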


The rest of the article is organized as follows. We construct the Yaglom limit of \BBMabs{} in Section~\ref{sec:spine}, showing in particular Proposition~\ref{prop:existenceYaglom} and Corollary \ref{survivalcor}. We then prove Theorems~\ref{NandM} and \ref{etachi} in Section~\ref{Yaglomsec}, largely by adapting the arguments of Maillard and Schweinsberg \cite{ms22}.

\section{Construction of the Yaglom limit}\label{sec:spine}

We study the asymptotic behaviour of \BBMabs{} conditionally on surviving for a long time using the same method as the one used in \cite{bbcm} to study the large deviations of branching Brownian motion or \cite{ll} for the large deviations of branching random walks.

We first introduce the spinal decomposition of the branching Brownian motion without absorption. This technique allows for the representation of the law of branching Brownian motion biased by an additive martingale as a branching process with a distinguished particle called the spine.  By conditioning the spine to be the right-most particle in the process, we obtain a representation of the Yaglom limit law $\mathcal{D}^\rho$.

Recall that for all $t > 0$, we denote by $\mathcal{N}_t$ the set of particles alive at time $t$ in a binary branching Brownian motion with drift $-\rho$, which we call \BBM{},
while $\mathcal{N}_t^+$ denotes the set of particles that stay in the positive half-line until time $t$.
For $t \geq 0$, we write $\mathcal{F}_t = \sigma(X_s(u), u \in \mathcal{N}_t, s \leq t)$ for the natural filtration associated with the branching Brownian motion. We define the additive martingale of this process as
\[
  W_t := \sum_{u \in \mathcal{N}_t} e^{\rho X_t(u) - (1 - \rho^2/2)t}.
\]
Using that $(W_t, t \geq 0)$ is a non-negative martingale, the biased law of the branching Brownian motion is defined as
\[
  \forall t \geq 0, \forall A \in \mathcal{F}_t, \quad \bar{\P}^\rho_x(A) = e^{-\rho x}\E^\rho_x(W_t \indset{A}).
\]
This law can be represented as a branching Brownian motion with spine. This representation was introduced by Chauvin and Rouault \cite{cr88}, extended to branching random walks by Lyons \cite{Lyo97}, and finally extended by Biggins and Kyprianou \cite{BiK04} to general branching processes.

The \BBM{} with spine is defined as follows. A spine particle, starting from position $x$ at time $0$, moves according to a Brownian motion without drift, while giving birth at rate $2$ to new particles. Each of these newborn particles then starts an independent copy of the \BBM. At time $t$, we denote by $w_t$ the label of the spine particle. We denote by $\hat{\P}_x$ the law of this process, and $(\mathcal{F}_t, t \geq 0)$ the filtration related to the positions of particles in the process with spine, without the information on the spine. The so-called spine decomposition of the branching Brownian motion (see \cite[Theorem~5]{cr88}, \cite{Lyo97} or \cite[Theorem 8.1]{hh04} for iterations on this argument) is the following result.

\begin{proposition}
\label{prop:spineDecomposition}
For all $A \in \mathcal{F}_t$, we have $\bar{\P}^\rho_x(A) = \hat{\P}^\rho_x(A)$. Moreover, we have
\[
  \hat{\P}^\rho_x(w_t = u| \mathcal{F}_t) = \frac{e^{\rho X_t(u)-(1 - \rho^2/2)t}}{W_t}.
\]
\end{proposition}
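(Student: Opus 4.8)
The plan is to obtain both assertions from the additive-martingale change of measure, realised on an enriched space that also records a distinguished line of descent. First I would build a probability space carrying the \BBM{} tree together with a \emph{marked spine}: at time $0$ the spine is the initial particle and follows its trajectory, and every time the current spine particle branches the spine continues into one of its two children, chosen uniformly at random and independently of everything else. Write $\widetilde{\P}^\rho_x$ for this law, $\widetilde{\mathcal F}_t$ for the filtration generated by all particle trajectories \emph{and} the spine up to time $t$, $w_t$ for the spine particle at time $t$, and $n_t$ for the number of branch events along the spine on $[0,t]$. By construction the image of $\widetilde{\P}^\rho_x$ under forgetting the spine is $\P^\rho_x$, and, conditionally on $\mathcal F_t$, the spine ends at a given $u \in \mathcal N_t$ with probability $2^{-g(u)}$, where $g(u)$ is the number of branch events on the ancestral line of $u$.

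Next I would introduce the candidate change-of-measure process
\[
  \widetilde{\zeta}_t := e^{-\rho x}\, 2^{n_t}\, e^{\rho X_t(w_t) - (1-\rho^2/2)t} \;=\; \Big( e^{\rho X_t(w_t) - \rho x + \rho^2 t/2} \Big)\Big( 2^{n_t}\, e^{-t}\Big).
\]
Under $\widetilde{\P}^\rho_x$ the spine performs a Brownian motion with drift $-\rho$ started from $x$ and, independently, accumulates branch events as a rate-one Poisson process, while the subtrees grafted onto the spine are, conditionally on the spine data, independent copies of \BBM{}. The first bracket above is the exponential martingale of the Brownian motion driving the spine, the second bracket is $e^{-t}$ times $2^{n_t}$, again a mean-one martingale, and $\widetilde{\zeta}_t$ depends only on the spine data; hence $\widetilde{\zeta}_t$ is a nonnegative mean-one $\widetilde{\mathcal F}_t$-martingale. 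Since on $\{w_t = u\}$ one has $n_t = g(u)$ and $X_t(w_t) = X_t(u)$, conditioning on the tree $\mathcal F_t$ and using $\widetilde{\P}^\rho_x(w_t = u \mid \mathcal F_t) = 2^{-g(u)}$ gives
\[
  \E_{\widetilde{\P}^\rho_x}\!\big[ \widetilde{\zeta}_t\,\ind{w_t = u} \,\big|\, \mathcal F_t \big] = 2^{-g(u)}\cdot e^{-\rho x}\,2^{g(u)}\, e^{\rho X_t(u) - (1-\rho^2/2)t} = e^{-\rho x}\, e^{\rho X_t(u) - (1-\rho^2/2)t},
\]
so that, summing over $u \in \mathcal N_t$, we obtain $\E_{\widetilde{\P}^\rho_x}[\widetilde{\zeta}_t \mid \mathcal F_t] = e^{-\rho x}\,W_t$.

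The remaining step is to identify the law $\hat{\P}^\rho_x$ of the \BBM{} with spine described before the statement with the tilt $\widetilde{\zeta}_t\cdot\widetilde{\P}^\rho_x$ on $\widetilde{\mathcal F}_t$: the Girsanov factor $e^{\rho X_t(w_t)-\rho x+\rho^2 t/2}$ turns the spine's drift $-\rho$ into drift $0$; the factor $2^{n_t}e^{-t}$ turns the rate-one branching along the spine into rate-two branching; and, $\widetilde{\zeta}_t$ being a function of the spine data alone, the grafted subtrees remain independent copies of \BBM{}. These are exactly the three defining features of $\hat{\P}^\rho_x$. Granting this, for $A \in \mathcal F_t$,
\[
  \hat{\P}^\rho_x(A) = \E_{\widetilde{\P}^\rho_x}[\widetilde{\zeta}_t\,\ind{A}] = \E_{\widetilde{\P}^\rho_x}\!\big[\E_{\widetilde{\P}^\rho_x}[\widetilde{\zeta}_t \mid \mathcal F_t]\,\ind{A}\big] = e^{-\rho x}\,\E^\rho_x[W_t\,\ind{A}] = \bar{\P}^\rho_x(A),
\]
using that the $\mathcal F_t$-marginal of $\widetilde{\P}^\rho_x$ is $\P^\rho_x$; and likewise
\[
  \hat{\P}^\rho_x(w_t = u \mid \mathcal F_t) = \frac{\E_{\widetilde{\P}^\rho_x}[\widetilde{\zeta}_t\,\ind{w_t = u} \mid \mathcal F_t]}{\E_{\widetilde{\P}^\rho_x}[\widetilde{\zeta}_t \mid \mathcal F_t]} = \frac{e^{\rho X_t(u)-(1-\rho^2/2)t}}{W_t}.
\]
The genuine work here is not the algebra but the bookkeeping behind it: one must set up the enriched filtration $\widetilde{\mathcal F}_t$ so that conditioning on it really does reduce to conditioning on the spine trajectory and its branch count (the grafted subtrees being irrelevant to $\widetilde{\zeta}_t$, by the branching property), and one must carry out the Girsanov-for-Brownian-motion and Poisson-thinning computations that identify $\widetilde{\zeta}_t\cdot\widetilde{\P}^\rho_x$ with $\hat{\P}^\rho_x$. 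All of this is by now classical, and if preferred one may simply cite it from \cite{cr88}, \cite{Lyo97} and \cite{hh04} rather than reproduce it.
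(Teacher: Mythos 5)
Your argument is correct, and it is essentially the argument the paper relies on: the paper gives no proof of Proposition~\ref{prop:spineDecomposition}, instead citing \cite{cr88}, \cite{Lyo97} and \cite{hh04}, and your construction (uniform spine, the single-particle martingale $e^{-\rho x}2^{n_t}e^{\rho X_t(w_t)-(1-\rho^2/2)t}$, projection onto $\mathcal{F}_t$ giving $e^{-\rho x}W_t$, and the Girsanov/Poisson-tilting identification of the tilted law with $\hat{\P}^\rho_x$) is precisely the classical proof found in those references. The one step you defer — identifying $\widetilde{\zeta}_t\cdot\widetilde{\P}^\rho_x$ with the spine-BBM law — is exactly the part the paper also leaves to the cited literature, so citing it there is appropriate.
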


Using the spine decomposition, we now propose a description of the law of $(\mathbf{X}_t, t\geq 0)$ through a point measure that we now describe. Let $\beta^{z,x,t}$ be a three-dimensional Bessel bridge of length $t$ from $z$ to $x$, and let $\tau = (\tau_j, j \geq 1)$ be the atoms of an independent Poisson point process with intensity $2$. Conditionally on $(\beta^{z,x,t},\tau)$, for each positive integer $j$, let $(\tilde{\mathbf{X}}^{(j)}_{s}, s \geq 0)$ be an independent copy of \BBMabs{} started from a single particle at position $\beta^{z,x,t}_{\tau_j}$ at time $0$. We define
\begin{equation}
  \label{eqn:defBarD}
  \tilde{\mathbf{X}}^{x,z}_t = \delta_z + \sum_{j=1}^\infty \ind{\tau_j < t} \tilde{\mathbf{X}}^{(j)}_{\tau_j}.
\end{equation}
In other words, to construct $\tilde{\mathbf{X}}^{x,z}_t$, we view $\beta^{z,x,t}$ as a Bessel bridge moving backward in time from $z$ at time $0$ to $x$ at time $-t$, while giving birth at rate $2$ to particles. These particles then start independent copies (forward in time) of a \BBMabs{}. The point measure $\tilde{\mathbf{X}}^{x,z}_t$ then describes the set of particles alive at time $0$.

\begin{figure}[ht]
\centering
\input{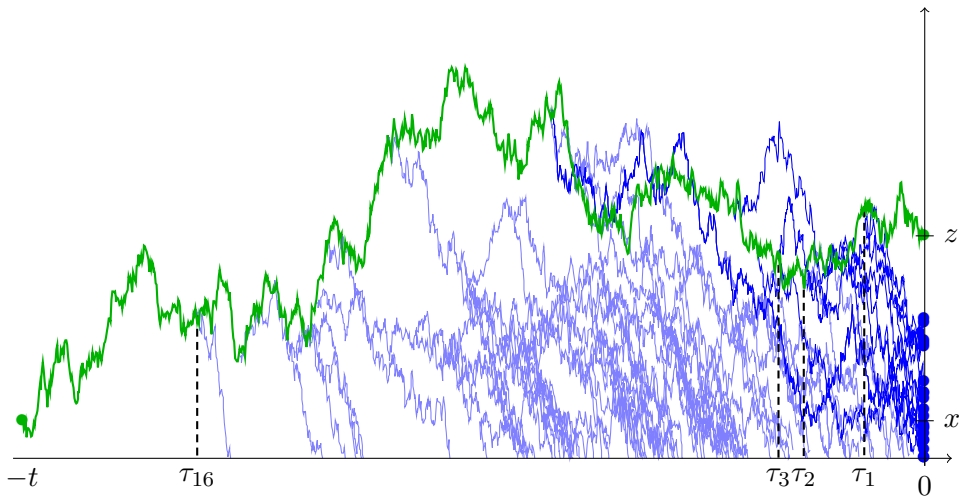}
\caption{Construction of the auxiliary point measure $\bar{D}^{\rho,x,z}_t$. The trajectory of the Bessel bridge is drawn in green, while the \BBMabs{} spawning from this spine are drawn in blue. The point measure is the set of point masses on $\R_+$ drawn at time $0$.}
\end{figure}

The following result allows one to describe the law of \BBMabs{} at time $t$ in terms of $\tilde{\mathbf{X}}^{x,z}_t$. The backward construction of $\tilde{\mathbf{X}}^{x,z}_t$ will make it convenient to consider limiting behavior as $t \to \infty$.

\begin{lemma}\label{EFDtLem}
Let $F$ be a measurable bounded function on $\mathfrak{P}$ such that $F(0) = 0$. For all $t \geq 0$ and $x > 0$, we have
\begin{equation*}
  e^{- t(1 - \rho^2/2)} \E^\rho_x\left( F(\mathbf{X}_t) \right)
   =  \int_0^\infty e^{\rho(x-z)}e^{-\frac{(x-z)^2}{2t}} \frac{1 - e^{-2xz/t}}{\sqrt{2 \pi t}} \E\left( F(\tilde{\mathbf{X}}^{x,z}_t) \ind{\tilde{\mathbf{X}}^{x,z}_t((z,\infty)) = 0} \right) \dd z.
\end{equation*}
\end{lemma}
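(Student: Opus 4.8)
The plan is to use the spinal decomposition of Proposition~\ref{prop:spineDecomposition} to rewrite $\E^\rho_x(F(\mathbf{X}_t))$ as an expectation over a branching Brownian motion carrying a distinguished spine which is \emph{forced to coincide with the right-most surviving particle}, then to split the genealogy along the spine, recognise the conditioned spine trajectory as a three-dimensional Bessel bridge, and match the outcome with the integral on the right-hand side.

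\emph{Step 1 (passing to the spine).} Let $R=R(\mathcal{F}_t)$ be the a.s.\ unique particle of $\mathcal{N}_t^+$ at which $X_t$ is maximal, with the convention $\ind{w_t=R}:=0$ when $\mathcal{N}_t^+=\emptyset$; note $R$ is $\mathcal{F}_t$-measurable. From Proposition~\ref{prop:spineDecomposition}, by conditioning on $\mathcal{F}_t$ and using both $\hat{\P}^\rho_x=\bar{\P}^\rho_x$ on $\mathcal{F}_t$ and $\bar{\P}^\rho_x(A)=e^{-\rho x}\E^\rho_x(W_t\indset{A})$, one extracts the many-to-one identity
\begin{equation*}
  \E^\rho_x\Bigl(\sum_{u\in\mathcal{N}_t}e^{\rho X_t(u)-(1-\rho^2/2)t}\,\Phi(\mathcal{F}_t,u)\Bigr)=e^{\rho x}\,\hat{\E}^\rho_x\bigl(\Phi(\mathcal{F}_t,w_t)\bigr),
\end{equation*}
valid for any family $(\Phi(\mathcal{F}_t,u))_u$ of $\mathcal{F}_t$-measurable weights. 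I would then take $\Phi(\mathcal{F}_t,u)=e^{-\rho X_t(u)+(1-\rho^2/2)t}F(\mathbf{X}_t)\ind{u=R}$, so that the exponential weights cancel and the left-hand side collapses to $\E^\rho_x(F(\mathbf{X}_t)\sum_u\ind{u=R})=\E^\rho_x(F(\mathbf{X}_t))$ (the hypothesis $F(0)=0$ being used to discard the event $\mathcal{N}_t^+=\emptyset$), giving
\begin{equation*}
  e^{-(1-\rho^2/2)t}\,\E^\rho_x(F(\mathbf{X}_t))=e^{\rho x}\,\hat{\E}^\rho_x\!\left(e^{-\rho X_t(w_t)}F(\mathbf{X}_t)\,\ind{w_t=R}\right).
\end{equation*}

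\emph{Step 2 (decomposition along the spine).} Under $\hat{\P}^\rho_x$ the spine is a driftless Brownian motion issued from $x$; conditionally on the spine trajectory and on the atoms $(\sigma_i)$ of the rate-$2$ Poisson process of branching times, the sub-populations born at the $\sigma_i$ are independent, the one born at $\sigma_i$ being a \BBM{} started from $X_{\sigma_i}(w_{\sigma_i})$ and run for time $t-\sigma_i$. Restricting a \BBM{} to the particles whose trajectory stays positive is by definition a \BBMabs{}, so on the event $\{\inf_{s\le t}X_s(w_s)>0\}$ — on which the spine segment $[0,\sigma_i]$ of each such particle is automatically positive — the $i$-th sub-population contributes to $\mathbf{X}_t$ through an independent \BBMabs{}, and $\mathbf{X}_t=\delta_{X_t(w_t)}+\sum_i\tilde{\mathbf{X}}^{(i)}_{t-\sigma_i}$. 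Up to a $\hat{\P}^\rho_x$-null set of ties, $\{w_t=R\}$ is then exactly the intersection of $\{\inf_{s\le t}X_s(w_s)>0\}$ with the events $\{\tilde{\mathbf{X}}^{(i)}_{t-\sigma_i}((X_t(w_t),\infty))=0\}$ ranging over $i$.

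\emph{Step 3 (Bessel bridge, conclusion, and the hard part).} Conditioning additionally on $z=X_t(w_t)$, the Brownian spine started at $x$, conditioned to stay positive on $[0,t]$ and to end at $z$, is a three-dimensional Bessel bridge of length $t$ from $x$ to $z$; by reversibility of the three-dimensional Bessel bridge (equivalently, after the time change $\sigma\mapsto t-\sigma$, which leaves the rate-$2$ branching process invariant) it has the law of $\beta^{z,x,t}$ read as in the construction of $\tilde{\mathbf{X}}^{x,z}_t$, and the associated sub-probability density is the transition kernel of Brownian motion killed at $0$, which by the reflection principle gives
\begin{equation*}
  e^{\rho x}e^{-\rho z}\,\frac{1}{\sqrt{2\pi t}}\Bigl(e^{-\frac{(x-z)^2}{2t}}-e^{-\frac{(x+z)^2}{2t}}\Bigr)=e^{\rho(x-z)}e^{-\frac{(x-z)^2}{2t}}\,\frac{1-e^{-2xz/t}}{\sqrt{2\pi t}}.
\end{equation*}
Under this conditioning the sub-trees along the spine are precisely the copies $\tilde{\mathbf{X}}^{(j)}$ started from $\beta^{z,x,t}_{\tau_j}$ and run for time $\tau_j$, one has $\delta_{X_t(w_t)}+\sum_i\tilde{\mathbf{X}}^{(i)}_{t-\sigma_i}=\tilde{\mathbf{X}}^{x,z}_t$, the Step~2 event becomes $\{\tilde{\mathbf{X}}^{x,z}_t((z,\infty))=0\}$, and $F(\mathbf{X}_t)=F(\tilde{\mathbf{X}}^{x,z}_t)$ there; integrating over $z$ against the above density yields the claim. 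I expect the main obstacle to be Step~2: one must argue carefully that restricting an unabsorbed \BBM{} sub-tree to its positive-staying particles genuinely produces a \BBMabs{} (the killing not disturbing the branching property), and that once the spine is conditioned to survive the event $\{w_t=R\}$ really factors over the sub-trees exactly as stated — in particular that the positivity of a surviving particle constrains only its own sub-tree. Reconciling the forward-time spine picture with the backward-in-time construction of $\tilde{\mathbf{X}}^{x,z}_t$, which is what forces the time reversal of the Bessel bridge, is the other bookkeeping point to handle with care.
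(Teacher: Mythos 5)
Your proposal is correct and follows essentially the same route as the paper's proof: decompose $\E^\rho_x(F(\mathbf{X}_t))$ over the label of the right-most surviving particle, apply the spine decomposition of Proposition~\ref{prop:spineDecomposition}, identify the spine conditioned to stay positive and end at $z$ with the (time-reversed) three-dimensional Bessel bridge underlying $\tilde{\mathbf{X}}^{x,z}_t$, and evaluate the killed Brownian density $\frac{1}{\sqrt{2\pi t}}e^{-(x-z)^2/2t}(1-e^{-2xz/t})$ via the reflection principle. The only differences are presentational (you phrase the change of measure as a many-to-one identity and spell out the factorization of $\{w_t=R\}$ over the sub-trees, which the paper treats more tersely), so nothing further is needed.
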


\begin{proof}
Given $t > 0$, we write $m^+_t \in \mathcal{N}^+_t$ for the label of the (a.s. unique) particle at the largest position at time $t$ among the particles that survived the absorption. If $\mathcal{N}^+_t = \emptyset$, we give an arbitrary value to $m^+_t$, and observe that on that event, $F(\mathbf{X}_t) = 0$. We observe that we can decompose $\E^\rho_x(F(\mathbf{X}_t))$ according to the label of the particle reaching the maximal position:
\begin{align*}
\E^\rho_x \left( F(\mathbf{X}_t) \right) &= \E^\rho_x\left(  F(\mathbf{X}_t) \sum_{u \in \mathcal{N}^+_t} \ind{m^+_t = u} \right)\\
&= \E^\rho_x\left( \sum_{u \in \mathcal{N}_t}  F(\mathbf{X}_t) \ind{m^+_t = u} \right) \\
&= \E^\rho_x\left( \sum_{u \in \mathcal{N}_t} F(\mathbf{X}_t) \ind{{X}_s(u) > 0 \: \forall s \leq t} \ind{\mathbf{X}_t(({X}_t(u),\infty)) = 0} \right).
\end{align*}
Therefore, using Proposition \ref{prop:spineDecomposition},
\begin{align*}
  &\E^\rho_x\left( F(\mathbf{X}_t) \right)\\
  &\qquad = \bar{\E}^\rho_x \left( \frac{e^{\rho x}}{W_t} \sum_{u \in \mathcal{N}_t} F(\mathbf{X}_t) \ind{X_s(u) > 0 \: \forall s \leq t} \ind{\mathbf{X}_t((X_t(u),\infty)) = 0} \right)\\
  &\qquad = \hat{\E}^\rho_x \left( \sum_{u \in \mathcal{N}_t} e^{\rho (x-X_t(u)) + (1 - \rho^2/2)t} \hat{\P}^\rho_x (w_t= u|\mathcal{F}_t) F(\mathbf{X}_t) \ind{X_s(u) > 0 \: \forall s \leq t} \ind{\mathbf{X}_t((X_t(u),\infty)) = 0} \right)\\
  &\qquad = e^{(1 - \rho^2/2)t} \hat{\E}^\rho_x \left( e^{\rho (x-X_t(w_t))}  F(\mathbf{X}_t) \ind{X_s(w_t) > 0 \: \forall s \leq t} \ind{\mathbf{X}_t((X_t(w_t),\infty)) = 0} \right).
\end{align*}
Because a three-dimensional Bessel bridge of length $t$ from $z$ to $x$ has the same law as the Brownian bridge of length $t$ from $z$ to $x$ conditioned to stay positive, $\tilde{\mathbf{X}}^{x,z}_t$ has the same law as ${\bf X}_t$ conditioned on $w_t = z$ and $X_s(w_t) > 0$ for all $s \leq t$.  As a result, conditioning on the value of $X_t(w_t)$ and then using the backward construction of the branching Brownian motion with spine, we have
\begin{align*}
&e^{-(1 - \rho^2/2)t} \E^\rho_x\left( F(\mathbf{X}_t) \right)\\
  &\quad = \int_0^\infty e^{\rho(x-z)} \frac{e^{- \frac{(x-z)^2}{2t}}}{\sqrt{2\pi t}} \P_x(B_s > 0, s\leq t|B_t = z) \E\left(F(\tilde{\mathbf{X}}^{x,z}_t) \ind{\tilde{\mathbf{X}}^{x,z}_t((z,\infty)) = 0} \right) \dd z,
\end{align*}
where $B$ is a Brownian motion started from $x$ under $\P_x$.
Finally, we have $$\P_x(B_s > 0, s \leq t | B_t = z) = 1 - e^{ - 2 x z/t}$$ using the reflection principle (see e.g. Proposition 3 of \cite{Sch92}), which completes the proof.
\end{proof}

Next, we consider the limit of $\tilde{\bf X}_t^{x,z}$ as $t\to\infty$. Let $R$ be a Bessel process of dimension 3 started from $z$ and $(\tau_j)_{j=1}^\infty$ be the atoms of an independent Poisson point process with intensity $2$. Conditionally on $(R,(\tau_j)_{j=1}^\infty)$, let $(\check{\mathbf{X}}^{(j)}_t, t\geq 0)_{j=1}^\infty$ be independent BBM$^+(-\rho$) processes, such that $(\check{\mathbf{X}}^{(j)}_t, t\geq 0)$ starts with one particle at position $R_{\tau_j}$ at time $0$. We show that $\tilde{\mathbf{X}}^{x,z}_t$ converges in law, as $t \to \infty$ towards the random variable
\[
  \check{\mathbf{X}}^{z} := \delta_z + \sum_{j=1}^\infty \check{\mathbf{X}}^{(j)}_{\tau_j}.
\]

\begin{lemma}\label{lem:convergeofX}
\label{lem}
For all $x > 0$, $\tilde{\mathbf{X}}^{x,z}_t \to \check{\mathbf{X}}^{z}$ in law as $t \rightarrow \infty$, and $\P(\check{\mathbf{X}}^{z}((z,\infty)) = 0)>0$.
\end{lemma}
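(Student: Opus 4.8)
The plan is to treat the two assertions in turn. For the weak convergence, the key observation is that $\tilde{\mathbf X}^{x,z}_t$ and $\check{\mathbf X}^{z}$ are built from the very same recipe — a Poisson process of ``branch ages'' $(\tau_j)$ of intensity $2$, and conditionally independent copies of \BBMabs{} each spawned from the value at age $\tau_j$ of a spine path and run for time $\tau_j$ — the only differences being that the spine is a Bessel-$3$ \emph{bridge} $\beta^{z,x,t}$ of length $t$ from $z$ to $x$ rather than a Bessel-$3$ \emph{process} $R$ from $z$, and that the ages are truncated at $t$. So the first step is: for every fixed $K>0$, the law of $(\beta^{z,x,t}_u)_{u\le K}$ converges in total variation, as $t\to\infty$, to the law of $(R_u)_{u\le K}$. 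Writing $q_s(a,b)=\tfrac ba\tfrac1{\sqrt{2\pi s}}\bigl(e^{-(b-a)^2/2s}-e^{-(b+a)^2/2s}\bigr)$ for the Bessel-$3$ transition density, the Markov property identifies the restriction of the bridge law to $[0,K]$ as the restriction of the law of $R$ tilted by $q_{t-K}(R_K,x)/q_t(z,x)$. Since $q_s(a,b)\le 2b^2/(\sqrt{2\pi}\,s^{3/2})$ for all $a$, and $q_s(a,b)\sim 2b^2/(\sqrt{2\pi}\,s^{3/2})$ as $s\to\infty$ for fixed $a,b$, this Radon--Nikodym density is bounded (by a constant once $t\ge2K$) and tends to $1$ almost surely; uniform integrability gives the total variation convergence.

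Given this, fix $K$ and couple the bridge and the Bessel process on $[0,K]$ so that they coincide with probability $\to1$, using the same Poisson process and the same conditionally chosen spawned \BBMabs{}'s. On the coincidence event the ``young'' parts $\delta_z+\sum_{j:\tau_j<K}\tilde{\mathbf X}^{(j)}_{\tau_j}$ and $\delta_z+\sum_{j:\tau_j<K}\check{\mathbf X}^{(j)}_{\tau_j}$ are literally equal, so the corresponding truncated measures converge in law. It then remains to show the ``old'' branch points are asymptotically negligible: for $|F|\le1$,
\[
\bigl|\E F(\tilde{\mathbf X}^{x,z}_t)-\E F(\check{\mathbf X}^{z})\bigr|\le 2\,\P\bigl(\exists\, j:\ K\le\tau_j<t,\ \tilde{\mathbf X}^{(j)}_{\tau_j}\ne 0\bigr)+o_t(1)+2\,\P\bigl(\exists\, j:\ \tau_j\ge K,\ \check{\mathbf X}^{(j)}_{\tau_j}\ne 0\bigr),
\]
and I would bound the two extreme terms by first moments. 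By Campbell's formula, $\P(\exists\,j:\tau_j\ge K,\ \check{\mathbf X}^{(j)}_{\tau_j}\ne0)\le\int_K^\infty 2\,\E[\P^\rho_{R_s}(\zeta>s)]\,ds$, and analogously for the bridge with $R$ replaced by $\beta^{z,x,t}$ and the integral over $[K,t]$. The crude many-to-one bound $\P^\rho_y(\zeta>s)\le\E^\rho_y[{\bf X}_s(\R_+)]\le C_\rho\,y\,e^{\rho y}s^{-3/2}e^{(1-\rho^2/2)s}$ holds \emph{uniformly in $y$} (from the hitting-time density of drifted Brownian motion, consistently with \eqref{hhsurvival}); splitting the expectation over $\{R_s\le\delta s\}$ and $\{R_s>\delta s\}$ with $\delta<(\rho^2-2)/(2\rho)$ makes $\E[\P^\rho_{R_s}(\zeta>s)]$ decay exponentially in $s$, so the integral tends to $0$ as $K\to\infty$. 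For the bridge I use, for $s\le t/2$, that the law of $(\beta^{z,x,t}_u)_{u\le s}$ is absolutely continuous with respect to that of $(R_u)_{u\le s}$ with density bounded by a constant $C(x,z)$ (same estimate on $q$), and for $s\in(t/2,t)$ the analogous comparison after time reversal (the reversed bridge is the bridge from $x$ to $z$); in the latter regime the spawned \BBMabs{}'s run for time $s>t/2$, so the same split shows that contribution is $O(t^2e^{-ct})\to0$. Letting $t\to\infty$ and then $K\to\infty$ gives $\E F(\tilde{\mathbf X}^{x,z}_t)\to\E F(\check{\mathbf X}^{z})$ for all bounded continuous $F$; the same estimates show $\E[\#\{j:\check{\mathbf X}^{(j)}_{\tau_j}\ne0\}]<\infty$, so $\check{\mathbf X}^{z}$ is almost surely a finite, nonzero point measure.

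For the positivity, conditionally on $R$ and $(\tau_j)$ the events $\{\check{\mathbf X}^{(j)}_{\tau_j}((z,\infty))=0\}$ are independent, so
\[
\P\bigl(\check{\mathbf X}^{z}((z,\infty))=0\bigm|R,(\tau_j)\bigr)=\prod_{j\ge1}\P^\rho_{R_{\tau_j}}\bigl({\bf X}_{\tau_j}((z,\infty))=0\bigr).
\]
Each factor is positive because already $\P^\rho_y(\zeta\le s)>0$ (with positive probability the initial particle does not branch before time $s$ and its trajectory reaches $0$), and $1-\P^\rho_y({\bf X}_s((z,\infty))=0)\le\P^\rho_y(\zeta>s)$; hence by the first-moment bound $\sum_j\bigl(1-\P^\rho_{R_{\tau_j}}({\bf X}_{\tau_j}((z,\infty))=0)\bigr)\le\sum_j\P^\rho_{R_{\tau_j}}(\zeta>\tau_j)<\infty$ almost surely, so the infinite product is a.s.\ strictly positive. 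Taking expectations yields $\P(\check{\mathbf X}^{z}((z,\infty))=0)>0$.

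The main obstacle is the uniform-in-$t$ tail control in the convergence step: showing that branch points of large age $\tau_j$, in particular those with $\tau_j$ comparable to $t$, contribute nothing in the limit. This requires both the uniform-in-starting-point survival estimate for \BBMabs{} and the absolute-continuity comparison of the Bessel bridge (and its time reversal) with the Bessel-$3$ process at \emph{all} intermediate times, so that the relevant first moments can be dominated by an integral that is finite and vanishes as $K\to\infty$. By comparison the total variation convergence on compact time intervals, the coupling of the young parts, and the positivity statement are comparatively routine.
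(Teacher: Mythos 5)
Your proposal is correct, but it takes a genuinely different route from the paper at the crucial step, namely the control of the branch points of large age. The paper argues pathwise: it combines the exponential tail of the all-time maximum of \BBM{} (inequality \eqref{fct:maxDep}) with a Borel--Cantelli argument and iterated-logarithm-type growth bounds on the Bessel bridge and the Bessel process, to get that with probability at least $1-\eta$, uniformly in $t$, only the first $N$ spawned subtrees are non-extinct; it then passes to the limit in these finite sums, quoting from Harris and Harris the weak convergence of $\beta^{z,x,t}$ to $R^z$ on compact time intervals. You instead control the old branch points in the mean: the uniform many-to-one bound $\P^\rho_y(\zeta>s)\le C_\rho\, y e^{\rho y} s^{-3/2} e^{(1-\rho^2/2)s}$, the split on $\{R_s\le \delta s\}$ with $\delta<(\rho^2-2)/(2\rho)$, Campbell's formula, and the explicit Radon--Nikodym comparison $q_{t-s}(R_s,x)/q_t(z,x)$ of the bridge with the Bessel process (plus time reversal of the bridge near its right endpoint, which uses the symmetry of the Bessel-$3$ semigroup with respect to its speed measure) show that the expected number of non-extinct subtrees with age in $[K,t)$, respectively $[K,\infty)$, vanishes as $K\to\infty$ uniformly in $t$. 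What your route buys: the same density estimate upgrades the bridge-to-Bessel convergence to total variation on compacts, so the ``young'' parts can be coupled to be literally equal, and in fact you obtain convergence in total variation of the point measures, stronger than the weak convergence asserted; your treatment of the positivity claim (a.s.\ summability of $1-\P^\rho_{R_{\tau_j}}(\mathbf{X}_{\tau_j}((z,\infty))=0)$ and strict positivity of each factor, hence a strictly positive infinite product) is also more quantitative than the paper's brief remark. What it costs is the reliance on explicit Bessel transition-density estimates and on the time-reversal identity for the bridge, where the paper only needs soft path growth bounds; both of these facts are standard but should be stated with a short justification if you write this up.
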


To prove this lemma, we make use of the following well-known fact, that the overall maximum of the BBM$(-\rho)$ has exponential tails. This result can be found for example in \cite[Equation~(4.12)]{madaule}, in the context of branching random walks. For all $x \geq 0$, we have
\begin{equation}
\label{fct:maxDep}
  \P^0_0\left(\sup_{t \geq 0, u \in \mathcal{N}_t} X_t(u) - \sqrt{2} t \geq x \right) = \P^\rho_0\left( \sup_{t\geq 0, u \in \mathcal{N}_t} X_t(u) - (\sqrt{2} -\rho)t \geq x \right) \leq e^{-\sqrt{2}x}.
\end{equation}
This result can alternatively be proved by computing the expected value of the $\P^0$-martingale $V_t = \sum_{u \in \mathcal{N}_t} e^{\sqrt{2} (X_t(u) - \sqrt{2} t)}$, stopped when a particle first crosses the curve $s \mapsto x + \sqrt{2} s$.

\begin{proof}[Proof of Lemma~\ref{lem}]
Recall that $\beta^{z,x,t}$ denotes a three-dimensional Bessel bridge from $z$ to $x$ of length $t$.
Also, recall that for each fixed $s \geq 0$, we have (see, for example, p. 70 of \cite{hh07})
\[
  \lim_{t \to \infty} (\beta^{z,x,t}_u, 0 \leq u \leq s) = (R^z_u, 0 \leq u \leq s) \text{ in distribution,}
\]
where $R^z$ is a three-dimensional Bessel process starting from $z$.

Next, using \eqref{fct:maxDep} and a trivial coupling, we note that
\[
  \sum_{j \in \mathbb{N}} \P\left(M(\tilde{\mathbf{X}}^{(j)}_{\tau_j}) \geq \beta^{z,x,t}_{\tau_j} + \frac{\sqrt{2} - \rho}{2} \tau_j\right) \leq \E\left( \sum_{j \in \mathbb{N}} e^{-\sqrt{2}(\rho - \sqrt{2})\tau_j/2}\right) < \infty.
\]
Therefore, by the Borel-Cantelli lemma, almost surely there exists $n \in \N$ such that for all $j \geq n$, we have
\[
  M(\mathbf{X}^{(j)}_{\tau_j}) \leq \beta^{z,x,t}_{\tau_j} + \frac{\sqrt{2} - \rho}{2}\tau_j.
\]

Finally, we use the law of iterated logarithm for the Bessel bridge $\beta^{z,x,t}$ and the Bessel process $R^z$, which implies
\[
  \left(\sup_{0  \leq s \leq t} \frac{\beta^{z,x,t}_s}{1+s^{2/3}}, t \geq 0 \right) \text{ is tight, and } \sup_{t \geq 0} \frac{R^z_t}{1+t^{2/3}} < \infty.
\]
Hence, for all $\eta > 0$, there exists $N \in \N$ such that with probability at least $1-\eta$, we have for all $t > 0$,
\[
  \tilde{\mathbf{X}}^{x,z}_t = \delta_z + \sum_{j=1}^N \ind{\tau_j < t} \tilde{\mathbf{X}}^{(j)}_{\tau_j} \text{ and similarly } \check{\mathbf{X}}^{z} = \delta_z + \sum_{j=1}^N \check{\mathbf{X}}^{(j)}_{\tau_j}.
\]
We complete the proof by letting $t$ and then $N$ grow to $\infty$, and using that $(\beta^{z,x,t}_s, s \leq \tau_N) \to (R^z_s, s \leq \tau_N)$ in law as $t \to \infty$.  Therefore, in particular $(\tilde{\mathbf{X}}^{(j)}, j \leq N) \to (\check{\mathbf{X}}^{(j)}, j \leq N)$ in law, and it follows that $\tilde{\mathbf{X}}^{x,z}_t \to \check{\mathbf{X}}^{z}$ in law as $t \rightarrow \infty$, which is the first part of the lemma.  The second claim in the lemma follows because $N < \infty$ and for each $j$, $\check{\mathbf{X}}^{(j)}_{\tau_j}((z, \infty)) > 0$ with positive probability.
\end{proof}

\begin{lemma}
\label{lem:yaglomLimit}
Let $F$ be a continuous bounded function on $\mathfrak{P}$ such that $F(0) = 0$. We have
\begin{equation*}
  \lim_{t \to \infty} t^{3/2} e^{t(\frac{\rho^2}{2}- 1) }\E_x^\rho\left( F(\mathbf{X}_t) \right)
   = x e^{\rho x}\sqrt{\frac{2}{\pi}}\int_0^\infty ze^{-\rho z}  \E\left( F(\check{\mathbf{X}}^{z}) \ind{\check{\mathbf{X}}^{z}((z,\infty)) = 0} \right) \dd z.
\end{equation*}
\end{lemma}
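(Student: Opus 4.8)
The plan is to combine Lemma~\ref{EFDtLem} with the convergence result of Lemma~\ref{lem} and a careful asymptotic analysis of the kernel
\[
  p_t(x,z) := e^{\rho(x-z)}e^{-\frac{(x-z)^2}{2t}} \frac{1 - e^{-2xz/t}}{\sqrt{2\pi t}}
\]
appearing in Lemma~\ref{EFDtLem}, after multiplying through by $t^{3/2}$. First I would write, using Lemma~\ref{EFDtLem},
\[
  t^{3/2} e^{t(\frac{\rho^2}{2}-1)} \E^\rho_x(F(\mathbf{X}_t)) = \int_0^\infty t^{3/2} p_t(x,z)\, g_t(z)\, \dd z,
  \qquad g_t(z) := \E\left( F(\tilde{\mathbf{X}}^{x,z}_t)\ind{\tilde{\mathbf{X}}^{x,z}_t((z,\infty))=0}\right).
\]
For fixed $z$, as $t\to\infty$ we have $e^{-\frac{(x-z)^2}{2t}}\to 1$ and $t^{3/2}\cdot\frac{1-e^{-2xz/t}}{\sqrt{2\pi t}} = t\cdot\frac{1-e^{-2xz/t}}{\sqrt{2\pi}} \to \frac{2xz}{\sqrt{2\pi}}$, so the prefactor converges pointwise to $\sqrt{\tfrac{2}{\pi}}\, x z\, e^{\rho(x-z)}$. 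For the integrand $g_t(z)$, Lemma~\ref{lem} gives $\tilde{\mathbf{X}}^{x,z}_t \Rightarrow \check{\mathbf{X}}^z$ in law; since $F$ is continuous and bounded, I would like to conclude $g_t(z)\to \E(F(\check{\mathbf{X}}^z)\ind{\check{\mathbf{X}}^z((z,\infty))=0})$. That identifies the pointwise limit of the integrand as exactly the integrand in the claimed formula, so the whole statement reduces to justifying passage of the limit inside the $\dd z$-integral.

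The main obstacle will be a pair of continuity/domination issues, which I would handle as follows. First, the functional $\nu \mapsto F(\nu)\ind{\nu((z,\infty))=0}$ is not continuous on $\mathfrak{P}$ because the indicator can jump when a particle sits exactly at $z$; however, $\P(\check{\mathbf{X}}^z(\{z\})\ge 1$ or $\check{\mathbf{X}}^z$ has a particle arbitrarily close to $z$ from above$)$ can be controlled, and in fact $\check{\mathbf{X}}^z(\{z\})=1$ always (the spine atom) while the remaining atoms have a density, so $\P(\check{\mathbf{X}}^z((z,\infty))=0) = \P(\check{\mathbf{X}}^z(\{z\})=1,\ \check{\mathbf{X}}^z((z,\infty))=0)$ and the boundary $\{\nu : \nu((z,\infty))=0\}$ is a.s.\ a continuity set of the limit law; this lets me invoke the mapping theorem to get $g_t(z)\to g_\infty(z)$ for (Lebesgue-)a.e.\ $z$. (If a direct continuity-set argument is awkward, I would instead approximate $\ind{\nu((z,\infty))=0}$ from above and below by continuous functions of $\sup\{a : \nu([a,\infty))>0\}$ and use a sandwich.) Second, for the dominated-convergence step I need an integrable bound on $t^{3/2}p_t(x,z)|g_t(z)|$ uniform in $t$ (say $t\ge 1$). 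Using $|g_t(z)|\le \|F\|_\infty\,\P(\tilde{\mathbf{X}}^{x,z}_t((z,\infty))=0)$ and, crucially, $1-e^{-2xz/t}\le \min(1, 2xz/t)$, I get $t^{3/2}p_t(x,z) \le \|F\|_\infty\,\frac{t}{\sqrt{2\pi}}\min(1,2xz/t)\, e^{\rho(x-z)}e^{-\frac{(x-z)^2}{2t}}$, which is not yet integrable in $z$ on its own because $e^{\rho(x-z)}$ blows up as $z\to\infty$ — except that it is multiplied by the small probability $\P(\tilde{\mathbf{X}}^{x,z}_t((z,\infty))=0)$. The key estimate is therefore a bound of the form $\P(\tilde{\mathbf{X}}^{x,z}_t((z,\infty))=0) \le C(x)\, e^{-(\rho-\sqrt2)z}$ (or better), obtained from \eqref{fct:maxDep}: conditionally on the Bessel bridge $\beta^{z,x,t}$, each subtree born at time $\tau_j$ from height $\beta^{z,x,t}_{\tau_j}$ must stay below $z$, and since $\beta^{z,x,t}_0=z$, already the first subtrees (born near $\tau_j$ small, so from heights $\approx z$) exceed $z$ with probability bounded below; more quantitatively one integrates the exponential tail \eqref{fct:maxDep} against the law of the bridge heights. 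Establishing such a bound, uniformly in $t\ge 1$, is the technical heart of the argument.

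With the a.e.\ pointwise convergence and the uniform integrable domination in hand, dominated convergence yields
\[
  \lim_{t\to\infty} t^{3/2} e^{t(\frac{\rho^2}{2}-1)}\E^\rho_x(F(\mathbf{X}_t)) = \int_0^\infty \sqrt{\tfrac{2}{\pi}}\, x z\, e^{\rho(x-z)}\, \E\left( F(\check{\mathbf{X}}^z)\ind{\check{\mathbf{X}}^z((z,\infty))=0}\right)\dd z = x e^{\rho x}\sqrt{\tfrac{2}{\pi}}\int_0^\infty z e^{-\rho z}\, \E\left( F(\check{\mathbf{X}}^z)\ind{\check{\mathbf{X}}^z((z,\infty))=0}\right)\dd z,
\]
which is the asserted identity. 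I would remark in passing that this limit is finite and strictly positive: finiteness from the domination, positivity from the second assertion of Lemma~\ref{lem} (that $\P(\check{\mathbf{X}}^z((z,\infty))=0)>0$ for $z$ in a set of positive measure) together with choosing $F\ge 0$ not identically zero — this positivity is what will later allow normalization of the right-hand side into the probability measure $\mathcal{D}^\rho$ and the identification of $K_\rho$.
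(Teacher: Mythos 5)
Your overall route is the paper's route: start from Lemma~\ref{EFDtLem} multiplied by $t^{3/2}$, obtain the pointwise limit of the inner expectation from Lemma~\ref{lem:convergeofX}, identify the pointwise limit of the kernel (using $t(1-e^{-2xz/t})\to 2xz$ and $e^{-(x-z)^2/2t}\to 1$), and conclude by dominated convergence. Your extra care about the discontinuity of $\nu\mapsto F(\nu)\ind{\nu((z,\infty))=0}$ is a fair point that the paper treats tersely; the coupling underlying the proof of Lemma~\ref{lem:convergeofX} does make it harmless, since the atom $\delta_z$ is present in both $\tilde{\mathbf{X}}^{x,z}_t$ and $\check{\mathbf{X}}^{z}$ and does not charge $(z,\infty)$, while the remaining atoms of $\check{\mathbf{X}}^{z}$ have atomless laws.

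The genuine problem is in your domination step, and it stems from a sign misreading: $e^{\rho(x-z)}=e^{\rho x}e^{-\rho z}$ \emph{decays} as $z\to\infty$, it does not blow up. Consequently the elementary bounds $e^{-(x-z)^2/2t}\le 1$, $t\bigl(1-e^{-2xz/t}\bigr)\le 2xz$ and $|g_t(z)|\le\|F\|_\infty$ already give the $t$-uniform dominating function $\sqrt{2/\pi}\,\|F\|_\infty\,x e^{\rho x}\, z e^{-\rho z}$, which is integrable on $(0,\infty)$; no decay in $z$ of $\P\bigl(\tilde{\mathbf{X}}^{x,z}_t((z,\infty))=0\bigr)$ is needed, and this trivial domination is exactly what the paper uses. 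This is fortunate, because the estimate you declare to be the technical heart, $\P(\check{\mathbf{X}}^{z}((z,\infty))=0)\le C(x)e^{-(\rho-\sqrt 2)z}$, is not only left unproved in your sketch but should actually fail: for large $z$ the Bessel process has negligible drift near its starting point, the first few subtrees are born at heights $z+O(1)$, and the probability that none of them ever exceeds $z$ stays bounded away from $0$ (and from $1$), so $C_\rho(z)$ does not tend to $0$ as $z\to\infty$. As written your argument is therefore incomplete at its self-declared key step, but the repair is immediate: discard that estimate, use the elementary dominating function above, and your proof coincides with the one in the paper.
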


\begin{proof}
For all $z \geq 0$, it follows from Lemma \ref{lem:convergeofX} that
\[
  \lim_{t \to \infty} \E\left(F(\tilde{\mathbf{X}}^{x,z}_t) \ind{\tilde{\mathbf{X}}^{x,z}_t((z,\infty)) = 0} \right) =  \E\left( F(\check{\mathbf{X}}^{z}) \ind{\check{\mathbf{X}}^{z}((z,\infty)) = 0} \right),
\]
while being bounded by $\|F\|_\infty$. Hence, using the dominated convergence theorem, we have
\begin{multline*}
  \lim_{t \to \infty} \int_0^\infty e^{-\rho z} e^{- \frac{(x-z)^2}{2t}}  t\left(1 - e^{-2xz/t} \right) \E\left(F(\tilde{\mathbf{X}}^{x,z}_t) \ind{\tilde{\mathbf{X}}^{x,z}_t((z,\infty)) = 0}  \right) \dd z\\
  = 2x \int_0^\infty z e^{-\rho z} \E\left(F(\check{\mathbf{X}}^{z}) \ind{\check{\mathbf{X}}^{z}((z,\infty)) = 0} \right) \dd z.
\end{multline*}
The proof is now complete through an application of Lemma \ref{EFDtLem}.
\end{proof}

\begin{remark}
\label{rem:krho}
In particular, taking $F$ to be the function $D \mapsto \ind{D \neq 0}$, Lemma~\ref{lem:yaglomLimit} yields the following representation of the constant $K_{\rho}$ from (\ref{hhsurvival}):
\[
  K_{\rho} = 2\int_0^\infty ze^{-\rho z} C_\rho(z) \dd z,
\]
where $C_\rho(z) = \P(\check{\mathbf{X}}^{z}((z,\infty))= 0)$.
\end{remark}

For all measurable subsets $A$ of $\mathfrak{P}$, define
$$\mathcal{D}^{\rho}(A) = \frac{\int_0^\infty ze^{-\rho z}  \P\left( \check{\mathbf{X}}^{z} \in A, \check{\mathbf{X}}^{z}((z,\infty)) = 0 \right) \dd z}{\int_0^\infty ze^{-\rho z}  \P\left( \check{\mathbf{X}}^{z}((z,\infty)) = 0 \right)\dd z}.$$
We observe that $\mathcal{D}^\rho$ is a probability measure on $\mathfrak{P}$.
Corollary \ref{cor:proofOfProp:existenceYaglom} below shows that $\mathcal{D}^{\rho}$, defined in this way, is the
limiting distribution in \eqref{eqn:yaglomPointMeasure}, which proves Proposition~\ref{prop:existenceYaglom} for $\nu = \delta_x$.

\begin{corollary}
\label{cor:proofOfProp:existenceYaglom}
For all continuous bounded functions $F$ with $F(0) = 0$ and all $x \geq 0$, we have
\[
  \lim_{t \to \infty} \E^\rho_x(F(\mathbf{X}_t) | \zeta > t) = \int_{\mathfrak{P}} F(\mathbf{x}) \: \mathcal{D}^{\rho}(\dd \mathbf{x}).
\]
\end{corollary}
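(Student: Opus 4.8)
The plan is to derive Corollary~\ref{cor:proofOfProp:existenceYaglom} by taking the ratio of the asymptotics in Lemma~\ref{lem:yaglomLimit} applied to $F$ and to the constant function $\ind{D \neq 0}$. Concretely, for a continuous bounded $F$ with $F(0)=0$, write
\[
  \E^\rho_x(F(\mathbf{X}_t) \mid \zeta > t) = \frac{\E^\rho_x(F(\mathbf{X}_t) \ind{\zeta > t})}{\P^\rho_x(\zeta > t)} = \frac{\E^\rho_x(F(\mathbf{X}_t))}{\P^\rho_x(\zeta > t)},
\]
where the second equality uses $F(0)=0$, so that $F(\mathbf{X}_t)$ already vanishes on $\{\zeta \leq t\}$ (since on that event $\mathbf{X}_t = 0$). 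I would then multiply numerator and denominator by $t^{3/2} e^{t(\rho^2/2 - 1)}$. The numerator converges by Lemma~\ref{lem:yaglomLimit} to
\[
  x e^{\rho x} \sqrt{\tfrac{2}{\pi}} \int_0^\infty z e^{-\rho z} \E\!\left( F(\check{\mathbf{X}}^z) \ind{\check{\mathbf{X}}^z((z,\infty)) = 0} \right) \dd z.
\]
For the denominator, I would apply Lemma~\ref{lem:yaglomLimit} to the particular function $F_0(D) = \ind{D \neq 0}$, which is continuous and bounded on $\mathfrak{P}$ with $F_0(0) = 0$, and note $\E^\rho_x(F_0(\mathbf{X}_t)) = \P^\rho_x(\mathbf{X}_t \neq 0) = \P^\rho_x(\zeta > t)$; this gives $t^{3/2} e^{t(\rho^2/2-1)} \P^\rho_x(\zeta > t) \to x e^{\rho x} \sqrt{2/\pi} \int_0^\infty z e^{-\rho z} C_\rho(z)\, \dd z$, consistent with \eqref{hhsurvival} and Remark~\ref{rem:krho}.

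Taking the quotient, the common prefactor $x e^{\rho x}\sqrt{2/\pi}$ cancels, and we are left with
\[
  \lim_{t\to\infty}\E^\rho_x(F(\mathbf{X}_t)\mid \zeta > t) = \frac{\int_0^\infty z e^{-\rho z}\,\E\!\left(F(\check{\mathbf{X}}^z)\ind{\check{\mathbf{X}}^z((z,\infty))=0}\right)\dd z}{\int_0^\infty z e^{-\rho z}\,\P\!\left(\check{\mathbf{X}}^z((z,\infty))=0\right)\dd z},
\]
which is exactly $\int_{\mathfrak{P}} F\, \dd\mathcal{D}^\rho$ by the definition of $\mathcal{D}^\rho$. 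The only point requiring a word of care is that the denominator integral is strictly positive and finite, so the ratio is well-defined: finiteness follows from $z e^{-\rho z}$ being integrable and $C_\rho(z) \leq 1$, and positivity follows from the second assertion of Lemma~\ref{lem:convergeofX}, namely $\P(\check{\mathbf{X}}^z((z,\infty)) = 0) > 0$ for $z$ in a set of positive Lebesgue measure (in fact the same argument as in the proof of Lemma~\ref{lem} shows this holds for every $z > 0$). One should also handle the case $x = 0$ (allowed in the statement) by noting that all the limits above degenerate gracefully, or simply by restricting attention to $x>0$ since the $x=0$ process is trivial; the statement for general $\nu \in \mathfrak{P}^*$ (as opposed to $\nu = \delta_x$) is addressed separately, after this corollary, by a branching/additivity argument.

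There is essentially no hard step here — the work has all been done in Lemmas~\ref{EFDtLem}, \ref{lem:convergeofX}, and \ref{lem:yaglomLimit}. The only mild subtlety is the justification that $F_0 = \ind{D\neq 0}$ is a legitimate test function in Lemma~\ref{lem:yaglomLimit}: it is bounded and measurable, and it is continuous on $\mathfrak{P}$ with the weak topology since $\{0\}$ is both open and closed (the null measure is isolated, as total mass is a nonnegative-integer-valued weakly continuous functional), so continuity and the hypothesis $F_0(0)=0$ both hold. Once that is in place, the corollary is an immediate ratio computation.
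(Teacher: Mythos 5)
Your proposal is correct and follows essentially the same route as the paper's own proof: write the conditional expectation as a ratio, multiply numerator and denominator by $t^{3/2}e^{t(\rho^2/2-1)}$, and apply Lemma~\ref{lem:yaglomLimit} to $F$ and to $\ind{D\neq 0}$ (exactly as the paper already does in Remark~\ref{rem:krho}), after which the definition of $\mathcal{D}^\rho$ gives the claim. Your extra remarks on the continuity of $\ind{D\neq 0}$ and the positivity of the denominator are sound and only make explicit what the paper leaves implicit.
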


\begin{proof}
By a direct application of Lemma~\ref{lem:yaglomLimit}, we have
\begin{align*}
  \lim_{t \to \infty}  \E^\rho_x(F(\mathbf{X}_t) | \zeta > t)
  &= \lim_{t \to \infty} \frac{t^{3/2} e^{-t(1-\rho^2/2)} \E^\rho_x(F(\mathbf{X}_t))}{t^{3/2} e^{-t(1-\rho^2/2)} \P_x^\rho(\mathbf{X}_t \neq 0)} \\
  &= \frac{\int_0^\infty ze^{-\rho z}  \E\left( F(\check{\mathbf{X}}^{z})\ind{\check{\mathbf{X}}^{z}((z,\infty)) = 0} \right) \dd z}{\int_0^\infty ze^{-\rho z}  \P\left( \check{\mathbf{X}}^{z}((z,\infty)) = 0 \right)\dd z} \\
  &= \int_{\mathfrak{P}} F(\mathbf{x}) \: \mathcal{D}^{\rho}(\dd \mathbf{x}),
\end{align*}
as claimed.
\end{proof}

\begin{remark}
\label{rem:yaglomConstruction}
Observe that we can sample from $\mathcal{D}^\rho$ in the  following two equivalent ways:
\begin{itemize}
\item Start by sampling a random variable $Z$ on $\R_+$ with probability distribution proportional to $z e^{-\rho z} C_\rho(z) \dd z$. Conditionally on $Z,$ then sample a copy $D^\rho$ of $\check{\mathbf{X}}^{Z}$ conditioned on $\check{\mathbf{X}}^{Z}((Z,\infty)) = 0$. The point measure $D^\rho$ has law $\mathcal{D}^\rho$. The conditioning does not affect the law of $Z$.
\item Start by sampling a random variable $Z$ according to the probability distribution $\rho^2 z e^{-\rho z} \dd z$ on $\R_+$. The law of $\check{\mathbf{X}}^{Z}$ conditionally on the event $\check{\mathbf{X}}^{Z}((Z,\infty)) = 0$ is $\mathcal{D}^\rho$. The conditioning affects the law of $Z$ (as should be expected).
\end{itemize}
\end{remark}

We are now able to prove Proposition~\ref{prop:existenceYaglom}.

\begin{proof}[Proof of Proposition \ref{prop:existenceYaglom}]
Let $\nu = \sum_{j=1}^n \delta_{x_j} \in \mathfrak{P}$. For all continuous bounded functions $F$, we have
\[
  \E_\nu(F({\bf X}_t) | \zeta > t) = \frac{\E_\nu(F({\bf X}_t)\ind{\zeta> t})}{\P_\nu(\zeta > t)}.
\]
Using the branching property, we observe that we can write
\[
  {\bf X} = \sum_{j=1}^n {\bf X}^{(j)} \quad \text{and} \quad \zeta = \max_{j \leq n} \zeta_j
\]
where the $X^{(j)}$ are independent \BBMabs{} processes with law $\P_{x_j}^\rho$ and $\zeta_j = \inf\{t > 0: {\bf X}^{(j)} = 0\}$ is the extinction time of $X^{(j)}$.

Using \eqref{hhsurvival} and the independence, there exists $C > 0$ (which depends on $\nu$) such that
\[
  \P^\rho_\nu( \#\{j : \zeta_j > t\} \geq 2) \leq C e^{- 2(\rho^2/2 - 1)t}.
\]
It follows that, using $\sim$ to denote that the ratio of the two sides tends to infinity as $t \rightarrow \infty$,
\begin{equation}\label{prop1.1a}
  \P^\rho_\nu(\zeta > t) \sim \sum_{j=1}^n \P^\rho_\nu(\zeta_j > t) = \sum_{j=1}^n \P^\rho_{x_j}(\zeta > t).
\end{equation}
and
\begin{equation}\label{prop1.1b}
\bigg| \E^\rho_\nu(F({\bf X}_t)\ind{\zeta > t}) - \sum_{j=1}^n \E^\rho_\nu(F({\bf X}^{(j)}_t)\ind{\zeta_j > t}) \bigg| \leq C (n+1) e^{-2(\rho^2/2-1)t} \|F\|_\infty.
\end{equation}
Therefore, letting $t \to \infty$ and using Corollary \ref{cor:proofOfProp:existenceYaglom}, we have
\begin{equation}\label{prop1.1c}
  \E_\nu^\rho (F ({\bf X}^{(j)}_t) \ind{\zeta_j > t}) = \E_{x_j}^\rho(F({\bf X}_t) \ind{\zeta > t}) \sim \P_{x_j}^\rho(\zeta > t) \int_{\mathfrak{P}} F({\bf x})\mathcal{D}^\rho(\dd {\bf x}).
\end{equation}
The result now follows from \eqref{prop1.1a}, \eqref{prop1.1b}, and \eqref{prop1.1c}.
\end{proof}

These results are enough to prove that $\mathcal{D}^\rho$ is a minimal quasi-stationary distribution associated to the \BBMabs{}.

\begin{proof}[Proof of Corollary \ref{survivalcor}]
Let $F$ be a continuous bounded function. We remark that, using the branching property, for all $s,t \geq 0$ we have
\[
  \E^\rho_x(F(\mathbf{X}_{t+s}) \ind{\zeta > t+s}) = \E_x^\rho\left( \E^\rho_{\mathbf{X}_t}(F(\mathbf{X}_s) \ind{ \zeta > s}) \ind{\zeta > t}\right).
\]
Hence, using Proposition~\ref{prop:existenceYaglom} with the function $G : \mathbf{x} \mapsto \E^\rho_\mathbf{x}(F(\mathbf{X}_s) \ind{ \zeta > s})$, we have
\[
  \lim_{t \to \infty} \E^\rho_x(F(\mathbf{X}_{t+s}) \ind{\zeta > t+s}|\zeta > t) = \int_{\mathfrak{P}} G({\bf x})\mathcal{D}^\rho(\dd {\bf x}) = \E_{\mathcal{D}^\rho}^\rho(F(\mathbf{X}_s )\ind{ \zeta > s}).
\]
On the other hand, using \eqref{hhsurvival} or Lemma~\ref{lem:yaglomLimit}, we have
\[
  \lim_{t \to \infty} \frac{\P^\rho_x(\zeta > t+s)}{\P_x^\rho(\zeta > t)} = e^{-(\rho^2/2 - 1)s}.
\]
Therefore, using that $\lim_{t \to \infty} \E^\rho_x(F(\mathbf{X}_{t+s})|\zeta > t+s) = \int F(\mathbf{x})\mathcal{D}^{\rho}(\dd \mathbf{x})$ by Proposition~\ref{prop:existenceYaglom} again, we deduce that
\[
\int_{\mathfrak{P}} F({\bf x})\mathcal{D}^\rho(\dd {\bf x}) = \E^\rho_{\mathcal{D}^\rho}(F(\mathbf{X}_s ) \ind{ \zeta > s}) e^{(\rho^2/2- 1)s},
\]
and hence that $\mathcal{D}^\rho$ is a quasi-stationary distribution of the \BBMabs{} with parameter $(\rho^2/2 - 1)$.

Using \eqref{theta}, we have
\begin{equation}
  \label{eqn:extinctionTime}
  \P^\rho_{\mathcal{D}^\rho}(\zeta > t) = e^{-(\frac{\rho^2}{2} - 1)t}.
\end{equation}
We now prove that the quasi-stationary distribution $\mathcal{D}^\rho$ is minimal. Let $\lambda > \frac{\rho^2}{2}-1$. Using \eqref{hhsurvival} and the branching property, we observe that for all $\nu \in \mathfrak{P}^*$, we have
\[
  \E_\nu^\rho(e^{\lambda \zeta}) = \infty.
\]
Let $\mu > \lambda$, and let us assume that there is a quasi-stationary measure $\mathcal{E}$ with parameter $\mu$.  We would have $\P_\mathcal{E}^\rho(\zeta > t) = e^{-\mu t}$, and therefore $\E_\mathcal{E}^\rho(e^{\lambda \zeta}) < \infty$. As a result, $\E_\nu^{\rho}(e^{\lambda \zeta}) < \infty$ for $\mathcal{E}$-a.s. $\nu \in \mathfrak{P}^*$, showing that $\mathcal{E}(\mathfrak{P}^*) = 0$, which leads to a contradiction.
\end{proof}

\section{Properties of the Yaglom limit}\label{Yaglomsec}

In this section, we prove Theorems \ref{NandM} and \ref{etachi}. We first recall in Section~\ref{subsec:ms22} the definitions and results from \cite{ms22} that will be used for these proofs. Then, using a coupling described in Section~\ref{couplesec} and preliminary bounds obtained in Section~\ref{subsec:bounds}, we prove Theorems~\ref{NandM} and~\ref{etachi} in Sections~\ref{subsec:NandM} and~\ref{subsec:etachi} respectively, by comparing \BBMabs{} with a branching Brownian motion in a strip of time-varying width described in Section~\ref{subsec:truncation}.

Throughout this section, we will be considering limits as $\eps \rightarrow 0$.  If $f$ and $g$ are functions, we write $f(\eps) \ll g(\eps)$ if $\lim_{\eps \rightarrow 0} f(\eps)/g(\eps) = 0$, $f(\eps) \sim g(\eps)$ if $\lim_{\eps \rightarrow 0} f(\eps)/g(\eps)=1$, and $f(\eps) \ll g(\eps)$ if there is a positive constant $C$ such that $f(\eps) \leq Cg(\eps)$ for all $\eps > 0$.

\subsection{Definitions of \texorpdfstring{$Z(\nu, L)$ and $T(\nu)$}{Z and T}}
\label{subsec:ms22}

In this subsection, we review some definitions and results from \cite{ms22} which are important for understanding the structure of the proofs.  The paper \cite{ms22} pertained to the case of critical drift $\rho = \sqrt{2}$ and used a different scaling in which the branching rate is $1/2$ rather than $1$ and the critical drift is $1$ rather than $\sqrt{2}$.  However, results from \cite{ms22} can easily be transferred to the setting of the present paper by scaling time by a factor of 2 and scaling space by a factor of $\sqrt{2}$.

For $t > 0$ and $s \in [0, t]$, let
\begin{equation}\label{Lcdef}
L_t(s) = c (t - s)^{1/3}, \qquad \text{where } c = \frac{(3 \pi^2)^{1/3}}{\sqrt{2}}.
\end{equation}
We also write $L_t = L_t(0)$.  For $L > 0$ and $x > 0$, define
\begin{equation}\label{zdef}
z(x,L) = \sqrt{2}L e^{\sqrt{2}(x - L)} \sin \bigg( \frac{\pi x}{L} \bigg) \1_{\{0 < x < L\}}.
\end{equation}
Then, for an arbitrary point measure $\nu \in \mathfrak{P}$ defined as in \eqref{nudef}, we define
$$Z(\nu, L) = \sum_i z(x_i, L), \qquad Z_t(s) = Z({\bf X}_s, L_t(s)).$$
We note that $Z_t(s)$ can be viewed as a measure of the ``size" of the process at time $s$ and, in particular, can be used to predict how likely it is that the process will survive until time $t$.  For example, it is shown in \cite{bbs14} that there are positive constants $C_3$ and $C_4$ such that for all $x$ such that $0 < x \leq L_t - 1$, we have
\begin{equation}\label{bbs14survival}
C_3 z(x,L_t) \leq \P_x^{\sqrt{2}}(\zeta > t) \leq C_4 z(x,L_t).
\end{equation}

This result allows us to define a quantity, which we call $T(\nu)$, which predicts with high accuracy how long the BBM with absorption will survive, if it begins from the initial configuration $\nu \in \mathfrak{P}$.  Following \cite{ms22}, we define
\begin{equation}\label{Tdef}
T(\nu) = \inf\{t: L_t \geq M(\nu) + 2 \mbox{ and }Z(\nu, L_t) \leq 1/2\}.
\end{equation}
That is, $T(\nu)$ is the smallest value of $t$ for which $Z(\nu, L_t) \leq 1/2$, considering only values of $t$ that are large enough that all particles in the initial configuration $\nu$ are below $L_t - 2$.  As explained in \cite{ms22}, the quantity $T(\nu)$ is well-defined and finite.  Note that $t\mapsto L_t$ is increasing and continuous, and $L \mapsto Z(\nu, L)$ is continuous for $L \geq M(\nu) + 2$.  Therefore, when $t = T(\nu)$, one of the two inequalities in the definition of $T(\nu)$ must be an equality.  That is, either $L_{T(\nu)} = M(\nu) + 2$, which will be the case if $Z(\nu, M(\nu) + 2) < 1/2$, or else $Z(\nu, L_{T(\nu)}) = 1/2$.

The result \eqref{bbs14survival} suggests that when $Z(\nu, L_t) = 1/2$, the probability that the process survives until time $t$ will not be close to either zero or one, and therefore the process should survive until approximately time $t$.  The following more formal statement, which is Lemma 2.11 of \cite{ms22}, states that this intuition is correct as long as no particle of $\nu$ is very close to $L_{T(\nu)}$.

\begin{lemma}\label{lemma2.11}
Let $\delta > 0$.  There exist positive constants $k$, $t_0$, and $a$ such that for all $\nu \in \mathfrak{P}$ such that $T(\nu) \geq t_0$ and $L_{T(\nu)} - M(\nu) \geq a$, we have $$\P^{\sqrt{2}}_{\nu} \big( |\zeta - T(\nu)| \leq k T(\nu)^{2/3} \big) > 1 - \delta.$$
\end{lemma}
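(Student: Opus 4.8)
This lemma is precisely Lemma~2.11 of \cite{ms22}, modulo the change of scaling: in \cite{ms22} the branching rate is $1/2$ and the critical drift is $1$, while here the branching rate is $1$ and the critical drift is $\sqrt{2}$. The plan is therefore \emph{not} to reprove the lemma from scratch, but to explain how the statement transfers. The dictionary is the one announced at the start of Section~\ref{subsec:ms22}: starting from the process of \cite{ms22}, replace time $t$ by $2t$ and space $x$ by $\sqrt{2}x$. Under this substitution a Brownian motion with generator $\frac{1}{2}\frac{d^2}{dx^2}$ run at critical drift $1$ and branching rate $1/2$ becomes a Brownian motion run at critical drift $\sqrt{2}$ and branching rate $1$, which is exactly \BBM{} with $\rho=\sqrt{2}$; and one checks that the functions $L_t(s)$, $z(x,L)$, $Z(\nu,L)$, and $T(\nu)$ defined in \eqref{Lcdef}--\eqref{Tdef} are the images under this rescaling of the corresponding objects of \cite{ms22}. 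In particular the constant $c=(3\pi^2)^{1/3}/\sqrt2$ here is the rescaled version of the constant $(3\pi^2/2)^{1/3}$ appearing there, and the exponents $(t-s)^{1/3}$ and $T(\nu)^{2/3}$ are scale-invariant up to multiplicative constants, which is why the window $kT(\nu)^{2/3}$ survives the change of variables (with a new value of $k$). Once this is checked, the event $\{|\zeta - T(\nu)|\le kT(\nu)^{2/3}\}$ in our notation is the image of the corresponding event in \cite{ms22}, and the probability bound $>1-\delta$ is preserved because rescaling is a bijection of the sample space.

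Concretely I would proceed as follows. First I would record the scaling map explicitly and verify the four identities $L^{\mathrm{MS}}_{t}(s)=\sqrt{2}\,L_{2t}(2s)$ (up to the constant), $z^{\mathrm{MS}}(x,L)=z(\sqrt2 x,\sqrt2 L)$, hence $Z^{\mathrm{MS}}(\nu,L)=Z(\tilde\nu,\sqrt2 L)$ where $\tilde\nu$ is the pushforward of $\nu$ under $x\mapsto\sqrt2 x$, and finally $T^{\mathrm{MS}}(\nu)=\tfrac12 T(\tilde\nu)$; the last follows from the first three and the definition \eqref{Tdef}, since the two defining inequalities $L_t\ge M(\nu)+2$ and $Z(\nu,L_t)\le 1/2$ are each preserved (the threshold $1/2$ and the constant $2$ are dimensionless). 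Second, I would invoke Lemma~2.11 of \cite{ms22} applied to $\tilde\nu$: given $\delta$ it produces constants $k^{\mathrm{MS}},t_0^{\mathrm{MS}},a^{\mathrm{MS}}$ with the stated bound. Third, I would translate back: the extinction time $\zeta^{\mathrm{MS}}$ of the \cite{ms22}-process started from $\nu$ equals $\tfrac12\zeta$ where $\zeta$ is the extinction time of our process started from $\tilde\nu$, so $|\zeta^{\mathrm{MS}}-T^{\mathrm{MS}}(\nu)|\le k^{\mathrm{MS}}T^{\mathrm{MS}}(\nu)^{2/3}$ becomes $|\zeta-T(\tilde\nu)|\le 2^{1/3}k^{\mathrm{MS}}T(\tilde\nu)^{2/3}$, and the hypothesis $L^{\mathrm{MS}}_{T^{\mathrm{MS}}(\nu)}-M(\nu)\ge a^{\mathrm{MS}}$ becomes $L_{T(\tilde\nu)}-M(\tilde\nu)\ge \sqrt2 a^{\mathrm{MS}}$ and $T^{\mathrm{MS}}(\nu)\ge t_0^{\mathrm{MS}}$ becomes $T(\tilde\nu)\ge 2t_0^{\mathrm{MS}}$. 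Since the map $\nu\mapsto\tilde\nu$ is a bijection of $\mathfrak{P}$, renaming $\tilde\nu$ as $\nu$ and setting $k=2^{1/3}k^{\mathrm{MS}}$, $a=\sqrt2 a^{\mathrm{MS}}$, $t_0=2t_0^{\mathrm{MS}}$ gives exactly the statement of Lemma~\ref{lemma2.11}.

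The only genuinely substantive point --- and hence the one I would spell out most carefully --- is the verification that the four rescaling identities hold on the nose, because the constants in \eqref{Lcdef} and \eqref{zdef} were \emph{chosen} to make this true, and it is worth confirming that the $\sin(\pi x/L)$ factor, the $e^{\sqrt2(x-L)}$ factor, and the prefactor $\sqrt2 L$ all transform consistently. Everything downstream of that is bookkeeping. I do not anticipate needing \eqref{bbs14survival} or any of the probabilistic heuristics discussed after \eqref{Tdef} in the present proof; those were the ingredients of the proof \emph{in} \cite{ms22}, and we are simply citing the conclusion. If one preferred a self-contained argument one could instead repeat the proof of \cite[Lemma~2.11]{ms22} verbatim in our scaling, but since the paper explicitly adopts the convention that ``results from \cite{ms22} can easily be transferred to the setting of the present paper by scaling time by a factor of $2$ and scaling space by a factor of $\sqrt{2}$,'' the transfer argument is the appropriate one.
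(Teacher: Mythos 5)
Your overall strategy is exactly the paper's: the paper gives no independent proof of this lemma, it simply states it as Lemma~2.11 of \cite{ms22} under the blanket convention that results of \cite{ms22} transfer after scaling time by $2$ and space by $\sqrt{2}$, which is the transfer argument you propose to carry out in detail.

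One caveat about the detail you single out as ``the only genuinely substantive point'': your dictionary is written in the wrong direction, and the identities as stated do not hold on the nose. The correct correspondence is that $t\mapsto 2^{-1/2}X^{\mathrm{MS}}_{2t}$ is the present critical process (branching rate $1$, drift $-\sqrt2$), so positions in \cite{ms22} are $\sqrt2$ times, and times are twice, the present ones. Hence the identities run $L^{\mathrm{MS}}_{2t}(2s)=\sqrt2\,L_t(s)$, $z(x,L)=z^{\mathrm{MS}}(\sqrt2 x,\sqrt2 L)$, $Z(\nu,L)=Z^{\mathrm{MS}}(\sqrt2\nu,\sqrt2 L)$, $T^{\mathrm{MS}}(\sqrt2\nu)=2\,T(\nu)$ and $\zeta^{\mathrm{MS}}=2\zeta$, whereas what you wrote ($z^{\mathrm{MS}}(x,L)=z(\sqrt2 x,\sqrt2 L)$, $T^{\mathrm{MS}}(\nu)=\tfrac12 T(\tilde\nu)$ with $\tilde\nu$ the $\sqrt2$-dilate, $\zeta^{\mathrm{MS}}=\tfrac12\zeta$) is the inverse dictionary and is false as an identity; indeed the implicit process $s\mapsto\sqrt2\,X^{\mathrm{MS}}_{s/2}$ has branching rate $1/4$, not $1$. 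Consequently the correct conversion of constants is $k=2^{-1/3}k^{\mathrm{MS}}$, $a=2^{-1/2}a^{\mathrm{MS}}$, $t_0=\tfrac12 t_0^{\mathrm{MS}}$ rather than $2^{1/3}k^{\mathrm{MS}}$, $\sqrt2\,a^{\mathrm{MS}}$, $2t_0^{\mathrm{MS}}$. This does not invalidate your conclusion, because the lemma only asserts the existence of such constants and enlarging $k$, $a$, $t_0$ only weakens the statement, so your (too large) constants are still admissible; but since you explicitly propose to verify the four identities ``on the nose,'' you should redo that verification with the map oriented correctly.
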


A challenge when applying Lemma \ref{lemma2.11} is proving that $L_{T(\nu)} - M(\nu) \geq a$ with high probability.  However, the following result, which is Lemma 2.12 of \cite{ms22}, shows that if we begin with any initial configuration, the configuration of particles after a short time will satisfy this condition with high probability.

\begin{lemma}\label{lemma2.12}
Let $\delta > 0$ and $A > 0$.  There exist positive real numbers $t_1$ and $d$, depending on $\delta$ and $A$, such that if $\nu \in \mathfrak{P}$, then
$$\P_{\nu}^{\sqrt{2}}(\{M({\bf X}_d) \geq L_{T({\bf X}_d)} - A\} \cap \{T({\bf X}_d) \geq t_1\}) < \delta.$$
\end{lemma}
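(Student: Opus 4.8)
The plan is to transfer the proof of \cite[Lemma~2.12]{ms22} to the present scaling, via the substitution (time $\mapsto 2\,$time, space $\mapsto\sqrt2\,$space) described at the start of Section~\ref{subsec:ms22}. We may assume $A\geq 2$, since the event in question is increasing in $A$.

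\emph{Reduction.} Since $t\mapsto L_t=ct^{1/3}$ is increasing, $\{T({\bf X}_d)\geq t_1\}$ forces $L_{T({\bf X}_d)}\geq ct_1^{1/3}$, so on the event in the statement we have $M({\bf X}_d)\geq ct_1^{1/3}-A=:B$, and $B$ can be made as large as we wish by taking $t_1$ large. Next, from \eqref{Tdef} one has $L_{T(\nu)}=\min\{L\geq M(\nu)+2:Z(\nu,L)\leq 1/2\}$. One reads off from \eqref{zdef} that $\partial_L\log z(x,L)<0$ once $L$ is large and $L-x\geq 2$, so $L\mapsto Z(\nu,L)$ is decreasing on $[M(\nu)+2,\infty)$ as soon as $M(\nu)$ is large; consequently, on $\{M({\bf X}_d)\geq B\}$ the bad event $\{M({\bf X}_d)\geq L_{T({\bf X}_d)}-A\}$ is exactly $\{Z({\bf X}_d,M({\bf X}_d)+A)\leq 1/2\}$. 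It therefore suffices to choose $B$ (hence $t_1$) and $d$ so that, for every $\nu\in\mathfrak{P}$,
\[
  \P^{\sqrt2}_\nu\big(M({\bf X}_d)\geq B,\ Z({\bf X}_d,M({\bf X}_d)+A)\leq 1/2\big)<\delta .
\]

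\emph{Mechanism.} When $M({\bf X}_d)\geq B$ with $B$ large, the particle realizing the maximum is the tip of a sub-branching Brownian motion that has been fanning out for a time $d$, and this forces substantial $Z$-mass just below the maximum. Quantitatively, using $\sin\theta\geq\tfrac2\pi\theta$ on $[0,\tfrac\pi2]$ in \eqref{zdef} gives $z(x,m+A)\geq 2\sqrt2\,(m+A-x)e^{-\sqrt2(m+A-x)}$ whenever $x\geq (m+A)/2$, so, with $m=M({\bf X}_d)$ and $(x_i)$ the positions of ${\bf X}_d$,
\[
  Z({\bf X}_d,m+A)\ \geq\ 2\sqrt2\,A\,e^{-\sqrt2 A}\,S_d,\qquad S_d:=\sum_{i:\,x_i\geq (m+A)/2}e^{-\sqrt2(m-x_i)} .
\]
Thus it is enough to show $S_d\geq N_A:=\tfrac1{4\sqrt2 A}e^{\sqrt2 A}$ with probability close to $1$ on $\{m\geq B\}$. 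Since the weights $e^{-\sqrt2(m-x_i)}$ decay and the front of the dominant sub-tree sits at depth $O(\log d)\ll m$, $S_d$ is, up to a lower-order truncation, the additive-martingale mass $e^{-\sqrt2 M({\bf X}_d)}\sum_i e^{\sqrt2 x_i}$ near the maximum; and if $M_d$ and $W_d=\sum e^{\sqrt2 X_d(u)}$ denote the maximal displacement and additive martingale of a BBM$(-\sqrt2)$ run for time $d$, then $M_d+\tfrac3{2\sqrt2}\log d$ is tight while the Seneta--Heyde norming gives $\sqrt d\,W_d\to c' Z_\infty$ with $Z_\infty>0$ a.s., so $e^{-\sqrt2 M_d}W_d$ is of order $d$ up to tight and a.s.-positive factors. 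Choosing $d$ large depending on $A$ (and $B$) then makes $\P(S_d\geq N_A)$ as close to $1$ as desired — at least in the ``typical'' regime where the maximum at time $d$ is attained without a large deviation of the corresponding sub-tree.

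\emph{Main obstacle.} Making the last step uniform in the arbitrary initial configuration $\nu$ is the delicate point. Conditioning on $\{M({\bf X}_d)\geq B\}$ can bias the sub-tree of the ancestor realizing the maximum: if that ancestor started well below $B$, reaching level $B$ is a large deviation under which the near-maximal population is atypically sparse. One must therefore split $\nu$ according to particle heights. Ancestors starting well below $B$ each reach level $B$ with probability at most $e^{-\sqrt2(B-x_u)}$ by \eqref{fct:maxDep}, and are controlled by a first-moment bound; a large family of ancestors near level $B$ already makes $Z(\nu,\cdot)$, hence $Z({\bf X}_d,M({\bf X}_d)+A)$, so large that the bad event cannot occur; and in the interpolating case of a few high ancestors each sub-tree is typical, so the estimate of the previous paragraph applies after restarting from time $d$. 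Assembling these cases, together with the boundary case $L_{T(\nu)}=M(\nu)+2$ of \eqref{Tdef} and the (negligible) effect of absorption at $0$ over the short time $d$ when all relevant particles are at height $\gg\sqrt2\,d$, is exactly the bookkeeping of \cite[Lemma~2.12]{ms22}; one fixes $B$ large first and then $d$ large in terms of $A$ and $B$. I expect this case analysis, rather than any single estimate, to be the hard part.
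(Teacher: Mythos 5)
The paper offers no proof of this statement at all: it is quoted verbatim as Lemma 2.12 of \cite{ms22}, to be transferred to the present normalization by the time/space rescaling announced at the start of Section~\ref{subsec:ms22}, which is exactly the route you take. Your supplementary sketch of the mechanism (mass of order $d$ accumulating within $O(1)$ of the maximum, plus a case analysis over the heights of the initial particles) is broadly consistent with the argument in \cite{ms22}, but it is not self-contained and is not needed --- the citation plus rescaling is all the paper uses.
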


\subsection{A coupling}\label{couplesec}

To apply Lemmas \ref{lemma2.11} and \ref{lemma2.12}, it will be necessary to show that the branching Brownian motion with drift $-\sqrt{2}$ is a good approximation to the branching Brownian motion with drift $-\rho = -\sqrt{2} - \eps$, as long as $\eps$ is small.  To do this, we construct a coupling so that, up to some time $t$, the \BBMabs{} stays in between two copies of BBM${}^+(-\sqrt{2})$ started from different initial configurations.

\begin{definition}\label{dominates}
Let $\nu_1$ and $\nu_2$ be point measures in $\mathfrak{P}$.  We say that $\nu_1$ {\em dominates} $\nu_2$, and write $\nu_1 \succ \nu_2$, if for all $y > 0$, we have $\nu_1([y, \infty)) \geq \nu_2([y, \infty))$.
\end{definition}

\begin{definition}\label{numinusb}
Let $\nu = \sum_i \delta_{x_i} \in \mathfrak{P}$, and let $\Delta > 0$.  Let $\nu - \Delta = \sum_i \delta_{x_i - \Delta}\1_{\{x_i > \Delta\}}$ be the configuration of particles obtained from $\nu$ by removing all particles in $(0, \Delta]$ and moving all others to the left by $\Delta$.
\end{definition}

To construct the coupling, we begin with a process $({\bf X}^*_t, t \geq 0)$ which evolves, under ${\bf P}_{\nu}^{\rho}$, as a BBM without drift and without absorption at zero, started from ${\bf X}^*_0 = \nu$.  Let $\mathcal{N}_t^*$ denote the set of particles alive at time $t$.  For all $u \in \mathcal{N}_t^*$ and $s \leq t$, we write $X^*_s(u)$ for the position at time $s$ of the particle $u$.  For all $t \geq 0$ and $\Delta > 0$, we define
\begin{align*}
{\bf X}_t &= \sum_{u \in \mathcal{N}_t^*} \delta_{X_t^*(u) - \rho t} \1_{\{X_s^*(u) - \rho s \geq 0 \: \forall s \in [0, t]\}} \\
{\bf X}^U_t &= \sum_{u \in \mathcal{N}_t^*} \delta_{X_t^*(u) - \sqrt{2} t} \1_{\{X_s^*(u) - \sqrt{2} s \geq 0 \: \forall s \in [0, t]\}} \\
{\bf X}^{L,\Delta}_t &= \sum_{u \in \mathcal{N}_t^*} \delta_{X_t^*(u) - \Delta - \sqrt{2} t} \1_{\{X_s^*(u) - \Delta - \sqrt{2} s \geq 0 \: \forall s \in [0, t]\}}.
\end{align*}
Under ${\bf P}_{\nu}^{\rho}$, the process $({\bf X}_t, t \geq 0)$ is \BBMabs, started from ${\bf X}_0 = \nu$.  The process $({\bf X}^U_t, t \geq 0)$ is BBM${}^{+}(-\sqrt{2})$, started from ${\bf X}^U_0 = \nu$.  The process $({\bf X}^{L,\Delta}_t, t \geq 0)$ is BBM${}^{+}(-\sqrt{2})$, started from ${\bf X}^{L,\Delta}_0 = \nu - \Delta$.  It is not difficult to see that if ${\bf X}_t$ has a particle at $y$, then ${\bf X}_t^U$ has a particle at $y + \eps t$ because the particles have a drift of $-\sqrt{2}$ instead of $-\rho$.  Also, ${\bf X}_t^U$ may have additional particles for which there is no corresponding particle alive at time $t$ in ${\bf X}_t$ because the smaller drift allowed the particle to avoid being absorbed at the origin.  Therefore,
\begin{equation}\label{Ucouple}
{\bf X}_t \prec {\bf X}_t^U \quad \mbox{for all }t \geq 0.
\end{equation}
Likewise, if $t \leq \eps^{-1} \Delta$ and ${\bf X}_t^{L,\Delta}$ has a particle at $y$, then ${\bf X}_t$ has a particle at $y + \Delta - \eps t$.  Thus,
\begin{equation}\label{Lcouple}
{\bf X}_t \succ {\bf X}_t^{L,\Delta} \quad \mbox{for all }t \in [0, \eps^{-1} \Delta].
\end{equation}

Define the extinction times $$\zeta^U = \inf\{t \geq 0: {\bf X}_t^U(\R^+) = 0\}, \qquad \zeta^{L,\Delta} = \inf\{t \geq 0: {\bf X}_t^{L,\Delta}(\R^+) = 0\}.$$ Note that \eqref{Ucouple} and \eqref{Lcouple} imply that
\begin{equation}\label{Uextinct}
\min(\epsilon^{-1}\Delta, \zeta^{L,\Delta}) \leq \zeta \leq \zeta^U.
\end{equation}

\subsection{Preliminary bounds on \texorpdfstring{$T(D^{\rho})$ and $M(D^{\rho})$}{T and M}}
\label{subsec:bounds}

Our goal in this subsection is to prove lemmas \ref{T0upper} and \ref{M0bound}, which give preliminary bounds for the quantities $T(D^{\rho})$ and $M(D^{\rho})$.  Lemma \ref{T0upper} shows that $T(D^{\rho})$ is at least comparable to $\eps^{-1}$, while Lemma \ref{M0bound} shows that $M(D^{\rho})$ is at most of the order $\eps^{-1/3}$.  These results will be used to obtain more precise estimates later.  The proofs of both of these results involve making comparisons to BBM with critical drift.

Lemma \ref{derzlemma} will help us to determine how much $Z_t({\bf X}_s)$ changes when the location of the particles or the upper boundary moves slightly.  The key ideas in the proof come from the proof of Lemma~4.1 in \cite{ms22}.  We will also use the inequalities
\begin{equation}\label{sinbounds}
\begin{array}{ll} (2/\pi)y \leq \sin(y) \leq y & \mbox{ if } 0 < y \leq \pi/2, \\
(2/\pi)(\pi - y) \leq \sin(y) \leq \pi - y & \mbox{ if } \pi/2 \leq y < \pi.
\end{array}
\end{equation}

\begin{lemma}\label{derzlemma}
If $1 \leq x \leq L - 2$, then
\begin{equation}\label{dzx}
z(x,L) \Big( \sqrt{2} - \frac{\pi}{4} \Big) \leq \frac{\partial z}{\partial x} (x,L) \leq z(x,L) \Big( \sqrt{2} + \frac{\pi}{2} \Big),
\end{equation}
and the lower bound in (\ref{dzx}) holds also when $0 < x < 1$.
Also, if $0 < x \leq L - 2$, then
\begin{equation}\label{dLx}
\frac{\partial z}{\partial L} (x,L) \leq -\frac{1}{8} z(x, L).
\end{equation}
\end{lemma}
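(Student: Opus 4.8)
The strategy is to differentiate the explicit formula \eqref{zdef} for $z(x,L) = \sqrt{2} L e^{\sqrt{2}(x-L)} \sin(\pi x/L)$ directly, and then use the elementary sine bounds \eqref{sinbounds} together with the constraint $x \leq L-2$ to control the resulting ratios. For the first estimate \eqref{dzx}, on the region $0 < x < L$ we compute
\[
\frac{\partial z}{\partial x}(x,L) = \sqrt{2}\, z(x,L) + \sqrt{2} L e^{\sqrt{2}(x-L)} \cdot \frac{\pi}{L} \cos\Big(\frac{\pi x}{L}\Big) = z(x,L)\Big( \sqrt{2} + \frac{\pi}{L} \cot\Big(\frac{\pi x}{L}\Big)\Big),
\]
so the whole problem reduces to bounding the quantity $\frac{\pi}{L}\cot(\pi x/L)$. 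Writing $y = \pi x/L \in (0,\pi)$, this term equals $\frac{\pi}{L} \cdot \frac{\cos y}{\sin y}$. First I would handle the upper bound: since $x \leq L - 2 < L$, in fact $x \le L/2$ forces $y \le \pi/2$ and $\cot y \ge 0$, but more importantly I need a bound valid for all $x \le L-2$. When $\cos y \le 0$ (i.e. $x \ge L/2$) the cotangent term is $\le 0$ and the upper bound $\sqrt{2} + \pi/2$ is immediate. When $\cos y > 0$, use $\cos y \le 1$ and the lower sine bound from \eqref{sinbounds} — namely $\sin y \ge (2/\pi)\min(y, \pi - y)$ — together with the fact that $x \ge 1$ gives $y = \pi x/L \ge \pi/L$, hence when $y \le \pi/2$ we get $\sin y \ge (2/\pi) y \ge 2/L$, so $\frac{\pi}{L}\cot y \le \frac{\pi}{L} \cdot \frac{1}{2/L} = \frac{\pi}{2}$. (A symmetric argument using $\pi - y \ge 2/L$, which follows from $x \le L-2$, covers the remaining subrange.) For the lower bound I need $\frac{\pi}{L}\cot y \ge -\pi/4$: when $\cos y \ge 0$ this is trivial, and when $\cos y < 0$ I use $|\cos y| \le 1$ and $\sin y \ge (2/\pi)(\pi - y) \ge 4/L$ (again from $x \le L-2$, which gives $\pi - y = \pi(L-x)/L \ge 2\pi/L$, so $\sin y \ge 4/L$), yielding $\frac{\pi}{L}\cot y \ge -\frac{\pi}{L}\cdot\frac{L}{4} = -\frac{\pi}{4}$. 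The lower bound requires only $x \le L-2$, not $x \ge 1$, so it holds also for $0 < x < 1$ as claimed.

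For the second estimate \eqref{dLx}, differentiate in $L$: from $\log z = \log(\sqrt{2}) + \log L + \sqrt{2}(x - L) + \log\sin(\pi x/L)$ we get
\[
\frac{1}{z}\frac{\partial z}{\partial L} = \frac{1}{L} - \sqrt{2} + \frac{\cos(\pi x/L)}{\sin(\pi x/L)}\cdot\Big(-\frac{\pi x}{L^2}\Big) = \frac{1}{L} - \sqrt{2} - \frac{\pi x}{L^2}\cot\Big(\frac{\pi x}{L}\Big).
\]
I need to show this is $\le -1/8$. The term $-\sqrt{2}$ is the main contribution; $1/L$ is small (for $L \ge 2$ it is at most $1/2$), so the issue is the cotangent term $-\frac{\pi x}{L^2}\cot(\pi x/L)$, which is negative (helpful) when $\cos(\pi x/L) \ge 0$ but positive when $x > L/2$. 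In that regime, writing $y = \pi x/L \in (\pi/2, \pi)$, I bound $x/L < 1$ and $\cot y = \cos y/\sin y$ with $|\cos y| \le \pi - y$ and $\sin y \ge (2/\pi)(\pi - y)$, so $|\cot y| \le \pi/2$, giving $-\frac{\pi x}{L^2}\cot y \le \frac{\pi}{L}\cdot\frac{\pi}{2} = \frac{\pi^2}{2L}$. Thus $\frac{1}{z}\frac{\partial z}{\partial L} \le \frac{1}{L} - \sqrt{2} + \frac{\pi^2}{2L}$, and for $L$ large enough this is comfortably below $-1/8$; one must check that the constraint $x \le L - 2$ (equivalently $L \ge x+2 \ge 2$) already suffices, possibly tightening the constant bookkeeping. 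Since $z \ge 0$, multiplying through by $z$ preserves the inequality and yields \eqref{dLx}.

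The main obstacle, and the step deserving the most care, is getting the numerical constants to close: the cotangent terms blow up as $x \to 0^+$ or $x \to L^-$, and the only thing saving us is the uniform separation $1 \le x \le L - 2$, which must be converted into quantitative lower bounds on $\sin(\pi x/L)$ and $\pi - \pi x/L$ of the form $\ge c/L$ via \eqref{sinbounds}. I would organize the argument by splitting into the cases $x \le L/2$ and $x > L/2$ (where the cotangent changes sign), handle each endpoint regime with the appropriate half of \eqref{sinbounds}, and verify at the end that the worst-case constants are indeed dominated by $\sqrt{2} - \pi/4 > 0$ and by $1/8$ respectively. No probabilistic input is needed; this is a self-contained calculus estimate, and the proof in \cite{ms22} for the analogous Lemma~4.1 can be followed closely.
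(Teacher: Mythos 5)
Your treatment of \eqref{dzx} is correct and is essentially the paper's argument: differentiate to get $\partial_x z = z\bigl(\sqrt{2} + \tfrac{\pi}{L}\cot(\pi x/L)\bigr)$, then use $|\cos|\le 1$ together with $\sin(\pi x/L)\ge 2x/L \ge 2/L$ (from $x\ge 1$) for the upper bound, and $\sin(\pi x/L) \ge (2/\pi)\cdot\pi(L-x)/L \ge 4/L$ (from $L-x\ge 2$) for the lower bound, which indeed needs only $x\le L-2$ and so extends to $0<x<1$.

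The second half, however, contains a genuine error. In the regime $x>L/2$, writing $y=\pi x/L\in(\pi/2,\pi)$, you claim $|\cos y|\le \pi-y$ and deduce $|\cot y|\le \pi/2$, hence that the harmful term $-\tfrac{\pi x}{L^2}\cot y$ is at most $\tfrac{\pi^2}{2L}$. The inequality $|\cos y|\le \pi-y$ is false near $y=\pi$ (it is the \emph{sine}, not the cosine, that \eqref{sinbounds} bounds by $\pi-y$): for $u=\pi-y$ small, $|\cos y|=\cos u\approx 1 \gg u$, and $|\cot y|\approx 1/u$ is unbounded on $(\pi/2,\pi)$. Quantitatively, taking $x=L-2$ with $L$ large gives $\pi-y=2\pi/L$ and $-\tfrac{\pi x}{L^2}\cot y \to \tfrac12$, so the bad term does \emph{not} decay like $1/L$, and your displayed bound $\tfrac1L-\sqrt2+\tfrac{\pi^2}{2L}$ is not a valid upper bound for $\tfrac1z\partial_L z$. (Even taken at face value it only closes for $L\gtrsim 4.6$, and you leave the remaining bookkeeping open; in the harmful regime $x>L/2$, $x\le L-2$ only forces $L>4$.) The repair is exactly the estimate you already used for the lower bound in \eqref{dzx}: bound $|\cos y|\le 1$ and $\sin y\ge 4/L$ to get $|\cot y|\le L/4$, so that, using $x\le L$,
\[
\frac{\pi x}{L^2}\,\bigl|\cot(\pi x/L)\bigr| \;\le\; \frac{\pi}{L}\cdot\frac{L}{4} \;=\; \frac{\pi}{4},
\]
and hence $\tfrac1z\partial_L z \le \tfrac1L-\sqrt2+\tfrac{\pi}{4}\le \tfrac12-\sqrt2+\tfrac{\pi}{4} < -\tfrac18$ for all $L\ge 2$ (and $L\ge2$ is automatic from $0<x\le L-2$). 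With that substitution the case $x\le L/2$ is as you say (the cotangent term has the favorable sign), and the lemma follows; this is the route taken in the paper.
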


\begin{proof}
A short calculation gives
$$\frac{\partial z}{\partial x} (x,L) = z(x,L) \bigg( \sqrt{2} + \frac{\pi}{L} \cdot \frac{\cos(\pi x/L)}{\sin(\pi x/L)} \bigg).$$
The upper bound in (\ref{dzx}) holds because $\cos(\pi x/L) \leq 1$ and, when $1 \leq x \leq L - 2$, we have $\sin(\pi x/L) \geq 2/L$.
When $0 < x \leq L/2$, the lower bound in (\ref{dzx}) follows immediately because $\cos(\pi x/L)/\sin(\pi x/L) \geq 0$.  When $L/2 < x \leq L-2$, we have $|\cos(\pi x/L)| \leq 1$ and $\sin(\pi x/L) \geq 4/L$, which together give the lower bound in (\ref{dzx}).

Likewise, we calculate
$$\frac{\partial z}{\partial L} (x,L) = -z(x,L) \bigg(\sqrt{2} + \frac{\pi x}{L^2} \cdot \frac{\cos(\pi x/L)}{\sin(\pi x/L)} - \frac{1}{L} \bigg).$$
Because $x \leq L$, the argument used to prove (\ref{dzx}) yields that for $0 < x \leq L-2$, we have
$$\frac{\partial z}{\partial L} (x,L) \leq -z(x,L) \bigg(\sqrt{2} - \frac{\pi}{4} - \frac{1}{L} \bigg).$$
If $0 < x \leq L - 2$, then we must have $L \geq 2$ and therefore $\sqrt{2} - \pi/4 - 1/L \geq \sqrt{2} - \pi/4 - 1/2 > 1/8$, which proves (\ref{dLx}).
\end{proof}

\begin{lemma}\label{critsurvive2}
For all $t > 0$, all $\rho \geq \sqrt{2}$, and all initial configurations $\nu$ such that $M(\nu) \leq L_t - 1$, we have
$$\P_{\nu}^{\rho}(\zeta > t) \leq C_4 Z_t(0).$$
\end{lemma}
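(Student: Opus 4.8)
The plan is to reduce the subcritical statement to the critical one by a monotone coupling, and then invoke the upper bound \eqref{bbs14survival} from \cite{bbs14}. First I would observe that increasing the drift can only make the process more likely to die out: if $\rho \geq \sqrt{2}$, then running BBM$^+(-\rho)$ and BBM$^+(-\sqrt{2})$ from the same underlying driftless BBM $({\bf X}^*_t,t\geq 0)$ — exactly as in the coupling of Section~\ref{couplesec}, with ${\bf X}_t$ built with drift $-\rho$ and ${\bf X}^U_t$ built with drift $-\sqrt{2}$ — gives ${\bf X}_t \prec {\bf X}^U_t$ for all $t$ by \eqref{Ucouple} (the computation there only used $\rho \geq \sqrt{2}$). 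In particular the extinction times satisfy $\zeta \leq \zeta^U$, so $\P^\rho_\nu(\zeta > t) \leq \P^{\sqrt{2}}_\nu(\zeta^U > t)$. Thus it suffices to prove the bound in the critical case $\rho = \sqrt{2}$.

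Next I would handle the critical case by summing the single-particle estimate over the initial configuration. Write $\nu = \sum_i \delta_{x_i}$ with $M(\nu) = \max_i x_i \leq L_t - 1$. By the branching property, if $\zeta_i$ denotes the extinction time of the independent BBM$^+(-\sqrt{2})$ subtree started from the particle at $x_i$, then $\zeta = \max_i \zeta_i$ and hence by a union bound
\begin{equation*}
\P^{\sqrt{2}}_\nu(\zeta > t) \leq \sum_i \P^{\sqrt{2}}_{x_i}(\zeta > t).
\end{equation*}
Since $0 < x_i \leq L_t - 1$ for every $i$, the upper bound in \eqref{bbs14survival} applies to each term, giving $\P^{\sqrt{2}}_{x_i}(\zeta > t) \leq C_4\, z(x_i, L_t)$. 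Summing over $i$ yields $\P^{\sqrt{2}}_\nu(\zeta > t) \leq C_4 \sum_i z(x_i, L_t) = C_4\, Z(\nu, L_t) = C_4\, Z_t(0)$, which combined with the coupling inequality of the previous paragraph proves the lemma.

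I do not expect a serious obstacle here: the only point requiring a little care is that \eqref{bbs14survival} as quoted is stated for a single starting particle and under the constraint $0 < x \leq L_t - 1$, so one must check that every particle of $\nu$ lies in that range — which is exactly the hypothesis $M(\nu) \leq L_t - 1$ together with $\nu \in \mathfrak{P}$ meaning all atoms are positive. The union bound is lossless enough for an upper bound (no need for the sharper asymptotics used in \eqref{prop1.1a}), so no independence subtleties arise. The mildest subtlety is simply making sure the coupling of Section~\ref{couplesec} is invoked with a \emph{critical} comparison process rather than the lower process ${\bf X}^{L,\Delta}$; once that is in place the argument is immediate.
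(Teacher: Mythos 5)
Your proposal is correct and follows essentially the same route as the paper: a monotone coupling (as in Section~\ref{couplesec}) reduces the claim to the critical drift $\rho=\sqrt{2}$, where \eqref{bbs14survival} plus a union bound over the particles of $\nu$ gives the result. The paper's proof is just a condensed version of exactly this argument.
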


\begin{proof}
By the coupling defined in Section \ref{couplesec}, we observe that $\rho \mapsto \P^\rho_\nu(\zeta > t)$ is non-increasing, as survival gets more difficult as the drift increases. It is therefore enough to prove the inequality with $\rho = \sqrt{2}$, which follows immediately from \eqref{bbs14survival} and a union bound.
\end{proof}

\begin{lemma}\label{T0upper}
For all $\eta > 0$, there exists $k > 0$ such that if $\eps$ is sufficiently small, then
$$\P(T(D^{\rho}) < k \eps^{-1}) < \eta.$$
\end{lemma}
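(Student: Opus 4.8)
The plan is to lower-bound $T(D^\rho)$ by controlling both of the conditions in its definition \eqref{Tdef}: we need to show that with probability close to $1$, for $t$ of order $\eps^{-1}$ we still have either $L_t < M(D^\rho) + 2$ or $Z(D^\rho, L_t) > 1/2$. Since Lemma~\ref{M0bound} (proved next, but I may invoke it here — actually, to avoid circularity I should argue directly) controls $M(D^\rho)$, the essential point is the $Z$-condition. So the first step is: if $T(D^\rho) < k\eps^{-1}$, then at $t = T(D^\rho)$ one of the two defining inequalities is an equality, and since $L_t = c t^{1/3}$ is of order $\eps^{-1/3}$ which is large, the relevant equality for small $\eps$ is $Z(D^\rho, L_{T(D^\rho)}) = 1/2$ (the case $L_{T(D^\rho)} = M(D^\rho)+2$ forces $M(D^\rho)$ to be of order $\eps^{-1/3}$ as well, which I handle as a side case using the explicit construction of $\mathcal{D}^\rho$ from Remark~\ref{rem:yaglomConstruction} and the exponential tail \eqref{fct:maxDep}). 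Thus it suffices to show
\[
  \P\big( Z(D^\rho, L_{k\eps^{-1}}) \leq 1/2 \big) < \eta
\]
for suitable $k$ and small $\eps$, since $L \mapsto Z(\nu, L)$ is continuous and $Z(\nu, L_t) > 1/2$ for all $t < T(\nu)$ once $t$ is large enough.

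The second step is to translate this back to an event about BBM with absorption. Under $\mathcal{D}^\rho$, the process $({\bf X}_t)_{t\geq 0}$ started from $D^\rho$ is quasi-stationary, so by Corollary~\ref{survivalcor} we have $\P^\rho_{\mathcal{D}^\rho}(\zeta > t) = e^{-(\rho^2/2-1)t}$; with $\rho = \sqrt{2}+\eps$ this is $e^{-(\sqrt{2}\eps + \eps^2/2)t}$, which at $t = k\eps^{-1}$ equals $e^{-\sqrt 2 k}e^{-k\eps/2} \geq \tfrac12 e^{-\sqrt{2}k}$ for $\eps$ small. So the process started from $D^\rho$ survives to time $k\eps^{-1}$ with probability bounded below by a positive constant depending only on $k$. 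On the other hand, if $Z(D^\rho, L_t)$ were small, the process should die quickly: by the coupling of Section~\ref{couplesec}, specifically \eqref{Uextinct}, we have $\zeta \leq \zeta^U$ where $\zeta^U$ is the extinction time of $\mathrm{BBM}^+(-\sqrt2)$ started from the same initial configuration, and by Lemma~\ref{critsurvive2} (applicable since, on the complement of the small-probability side case, $M(D^\rho) \leq L_t - 1$) we have $\P^\rho_{D^\rho}(\zeta > t \mid D^\rho) \leq C_4 Z(D^\rho, L_t)$. Combining: conditioning on $D^\rho$,
\[
  \tfrac12 e^{-\sqrt2 k} \leq \P^\rho_{\mathcal{D}^\rho}(\zeta > k\eps^{-1}) = \E\big[ \P^\rho_{D^\rho}(\zeta > k\eps^{-1} \mid D^\rho) \big] \leq C_4 \E\big[ Z(D^\rho, L_{k\eps^{-1}}) \big] + \P(\text{side case}).
\]

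The third step is to choose $k$ large and run the above through Markov's inequality. From the displayed bound, $\E[Z(D^\rho, L_{k\eps^{-1}})] \geq \tfrac{1}{4C_4} e^{-\sqrt2 k}$ minus a negligible correction; but this lower bound on the \emph{mean} is not directly what we want — we want a lower bound on the probability that $Z > 1/2$. Here I would instead argue as follows: split $\P^\rho_{\mathcal{D}^\rho}(\zeta > k\eps^{-1})$ according to whether $Z(D^\rho, L_{k\eps^{-1}}) \leq 1/2$ or not. On $\{Z(D^\rho,L_{k\eps^{-1}}) \leq 1/2\}$, Lemma~\ref{critsurvive2} gives $\P^\rho_{D^\rho}(\zeta > k\eps^{-1}\mid D^\rho) \leq C_4/2$, so the contribution to $\P^\rho_{\mathcal{D}^\rho}(\zeta > k\eps^{-1})$ from this event is at most $\tfrac{C_4}{2}\P(Z(D^\rho, L_{k\eps^{-1}}) \leq 1/2) + \P(\text{side case})$... this still doesn't close unless $C_4/2 < 1$, which we cannot assume. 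The cleaner route: note $\P^\rho_{D^\rho}(\zeta > k\eps^{-1}\mid D^\rho) \leq \min(1, C_4 Z(D^\rho, L_{k\eps^{-1}}))$, and on $\{Z \leq 1/2\}$ the right side is at most $C_4/2$; but also the total survival probability $\tfrac12 e^{-\sqrt2 k}$ must be at least... Actually the right device is to use a \emph{two-scale} version: apply Lemma~\ref{critsurvive2} not at time $k\eps^{-1}$ but at a slightly later time $k\eps^{-1} + r$ where $L$ has grown, or better, use that if $Z(D^\rho,L_{k\eps^{-1}}) \leq 1/2$ then $T(D^\rho) \leq k\eps^{-1}$ means the process dies near time $k\eps^{-1}$, so $\P^\rho_{D^\rho}(\zeta > 2k\eps^{-1} \mid D^\rho, Z \leq 1/2)$ is small (by Lemma~\ref{critsurvive2} at the larger time, since $L_{2k\eps^{-1}} > L_{k\eps^{-1}}$ but $Z$ only decreases further as the process evolves past its peak — this requires a supermartingale-type statement for $Z_t(s)$, which is exactly the content developed via Lemma~\ref{derzlemma}). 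Comparing with $\P^\rho_{\mathcal{D}^\rho}(\zeta > 2k\eps^{-1}) = \tfrac12 e^{-2\sqrt2 k}$, which is bounded below, forces $\P(Z(D^\rho, L_{k\eps^{-1}}) > 1/2)$ to be bounded below by a constant $\to 1$ as $k \to \infty$.

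\textbf{The main obstacle} is precisely the last step: turning a lower bound on the \emph{survival probability} (equivalently on $\E[Z]$) into the statement that $Z(D^\rho, L_{k\eps^{-1}}) > 1/2$ with probability close to $1$. The resolution has to exploit that once $Z(\nu, L_t) \leq 1/2$, the process is genuinely on the verge of extinction and cannot survive much longer — this needs a monotonicity/supermartingale property of $s \mapsto Z_t(s)$ along the trajectory (so that $Z$ staying below $1/2$ is a self-reinforcing trap), combined with the fact that the quasi-stationary process survives an \emph{arbitrarily long} multiple of $\eps^{-1}$ with probability bounded away from zero. I would lean on the machinery of Section~\ref{subsec:ms22} (Lemmas~\ref{lemma2.11}, \ref{lemma2.12}) transported to drift $-\rho$ via the coupling \eqref{Uextinct}, \eqref{Ucouple}, \eqref{Lcouple}: these say that $\zeta \approx T(D^\rho)$ with high probability, and since $\zeta$ is exponentially distributed with rate $\sqrt2\eps$ under $\mathcal{D}^\rho$, so is $T(D^\rho)$ up to $o(\eps^{-1})$ corrections, whence $\P(T(D^\rho) < k\eps^{-1}) \to \P(\mathrm{Exp}(\sqrt 2) < \sqrt2 k) = 1 - e^{-\sqrt 2 k}$, which is $< \eta$ for $k$ small enough. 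This last line of reasoning is the shortest path and is the one I would actually write, with Lemmas~\ref{lemma2.11} and \ref{lemma2.12} (plus the coupling, plus the side-case control of $M(D^\rho)$ via Lemma~\ref{M0bound} once it is available, or via \eqref{fct:maxDep} and Remark~\ref{rem:yaglomConstruction} to keep things self-contained) doing the heavy lifting.
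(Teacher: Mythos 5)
You have the right raw ingredients for the paper's argument --- evaluate things at the later time $2k\eps^{-1}$, note that $Z$ of the \emph{same} configuration $D^{\rho}$ at the larger boundary $L_{2k\eps^{-1}}$ is much smaller than $1/2$ (this is just the deterministic derivative bound \eqref{dLx}; no supermartingale statement for $Z_t(s)$ along the trajectory is needed), and apply Lemma~\ref{critsurvive2} at that later time to conclude that on $\{T(D^{\rho})<k\eps^{-1}\}$ the process is extremely unlikely to survive past $2k\eps^{-1}$. But you close the comparison the wrong way. You compare against the \emph{lower} bound $\P^{\rho}_{\mathcal{D}^{\rho}}(\zeta>2k\eps^{-1})\geq \tfrac12 e^{-2\sqrt2 k}$ and claim this forces $\P(Z(D^{\rho},L_{k\eps^{-1}})>1/2)$ to tend to $1$ as $k\to\infty$; it does not --- that comparison only gives $\P(Z>1/2)\geq \tfrac12 e^{-2\sqrt2 k}-p_k$, which goes to $0$, not $1$, as $k$ grows, and in any case the regime is backwards: the event $\{T(D^{\rho})<k\eps^{-1}\}$ \emph{increases} with $k$ (its probability is roughly $1-e^{-\sqrt2 k}$), so the lemma can only hold with $k$ chosen \emph{small}. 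The correct closing move, which is the paper's, is the opposite inequality from Corollary~\ref{survivalcor}: $\P^{\rho}_{\mathcal{D}^{\rho}}(\zeta\leq 2k\eps^{-1})\leq 2\sqrt2 k+k\eps$, while near-certain extinction by $2k\eps^{-1}$ on $\{T(D^{\rho})<k\eps^{-1}\}$ gives $\P^{\rho}_{\mathcal{D}^{\rho}}(\zeta\leq 2k\eps^{-1})\geq \P(T(D^{\rho})<k\eps^{-1})(1-o(1))$; hence $\P(T(D^{\rho})<k\eps^{-1})\lesssim 2\sqrt2 k<\eta$ for $k<\eta/(2\sqrt2)$. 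As written, your argument never produces this bound.

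The route you say you would ``actually write'' --- transport Lemmas~\ref{lemma2.11} and~\ref{lemma2.12} through the coupling to get $\zeta\approx T(D^{\rho})$ and then read off the exponential law of $T$ from that of $\zeta$ --- is circular in this paper's architecture. Lemma~\ref{lemma2.11} requires $L_{T(\nu)}-M(\nu)\geq a$ and $T(\nu)\geq t_0$ with high probability; the first of these is exactly Lemma~\ref{biglemma}, and the full $\zeta$--$T$ comparison is Lemma~\ref{zetaTlemma}, both of which are proved \emph{later} and use Lemma~\ref{T0upper} itself (together with \eqref{meanN} and a many-to-one estimate to control particles rising from near the origin after applying Lemma~\ref{lemma2.12} to the coupled critical process). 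Controlling $M(D^{\rho})$ via Lemma~\ref{M0bound} or \eqref{fct:maxDep} does not substitute, since it gives no lower bound on $L_{T(D^{\rho})}-M(D^{\rho})$. So either you supply an independent proof of those inputs (essentially redoing Lemma~\ref{biglemma} without $T_0$-type information, which is not sketched), or you must fall back on the two-scale argument above --- which does work, but only once the comparison is closed with the upper bound on early extinction and $k$ taken small.
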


\begin{proof}
By the definition of $T(D^{\rho})$, we have $Z(D^{\rho}, L_{T(D^{\rho})}) \leq 1/2$.  Note that on the event $\{T(D^{\rho}) < k \eps^{-1}\}$, we have $L_{2 k \eps^{-1}} - L_{T(D^{\rho})} \geq c (2^{1/3} - 1) k^{1/3} \eps^{-1/3}$.  It follows from (\ref{dLx}) that on $\{T(D^{\rho}) < k \eps^{-1}\}$, we have $$Z(D^{\rho}, L_{2 k \eps^{-1}}) \leq Z(D^{\rho}, L_{T(D^{\rho})}) \cdot e^{-(1/8)(L_{2 k \eps^{-1}} - L_{T(D^{\rho})})} \leq \frac{1}{2} e^{-(1/8) c (2^{1/3} - 1) k^{1/3} \eps^{-1/3}}.$$
Therefore, by Lemma \ref{critsurvive2}, since ${\bf X}_0$ has the same law as $D^{\rho}$ under $\P^{\rho}_{\mathcal{D}^{\rho}}$, we have
$$\P_{\mathcal{D}^{\rho}}^{\rho}(\zeta > 2 k \eps^{-1} \,|\, T({\bf X}_0) < k \eps^{-1}) \leq \frac{C_4}{2} e^{-(1/8) c (2^{1/3} - 1) k^{1/3} \eps^{-1/3}}.$$  It follows that
$$\P_{\mathcal{D}^{\rho}}^{\rho}(\zeta \leq 2 k \eps^{-1}) \geq \P(T(D^{\rho}) < k \eps^{-1}) \bigg(1 - \frac{C_4}{2} e^{-(1/8) c (2^{1/3} - 1) k^{1/3} \eps^{-1/3}} \bigg).$$
By Corollary \ref{survivalcor}, $$\P_{\mathcal{D}^{\rho}}^{\rho}(\zeta \leq 2 k \eps^{-1}) = 1 - \exp((1 - \rho^2/2)(2 k \eps^{-1})) = 1 - \exp(-(2 \sqrt{2} k + k \eps)) \leq 2 \sqrt{2} k + k \eps,$$ so
$$\P(T(D^{\rho}) < k \eps^{-1}) \leq (2 \sqrt{2} k + k \eps) \bigg(1 - \frac{C_4}{2} e^{-(1/8) c (2^{1/3} - 1) k^{1/3} \eps^{-1/3}} \bigg)^{-1}.$$
Therefore, if we take $k < \eta/(2 \sqrt{2})$, then
$$\limsup_{\eps \rightarrow 0} \P(T(D^{\rho}) < k \eps^{-1}) \leq 2 \sqrt{2} k < \eta,$$ which implies the lemma.
\end{proof}

For the following lemma, we will need a result for the case of critical drift.  Maillard and Schweinsberg \cite{ms22}, building on earlier work in \cite{kesten, bbs14}, showed that for all $x \in \R$,
\begin{equation}\label{critsurvive}
\lim_{t \rightarrow \infty} \P_{L_t + x}^{\sqrt{2}}(\zeta > t) = \phi(x),
\end{equation}
where $\phi$ is a function satisfying $\lim_{x \rightarrow \infty} \phi(x) = 1$ and $\lim_{x \rightarrow -\infty} \phi(x) = 0$.

\begin{lemma}\label{M0bound}
For all $\eta > 0$, there exists $J > 0$ such that if $\eps$ is sufficiently small, then $$\P(M(D^{\rho}) > J \eps^{-1/3}) < \eta.$$
\end{lemma}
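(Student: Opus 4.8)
The plan is to argue by contradiction: if $M(D^\rho)$ were large with non-negligible probability, then BBM${}^+(-\rho)$ started from $\mathcal{D}^\rho$ would survive for an unusually long time with non-negligible probability, contradicting the exact exponential law for $\zeta$ established in Corollary \ref{survivalcor}. The key point is that a single particle located high up — at height $\approx y \eps^{-1/3}$ with $y$ large — already has a reasonable chance of surviving for a time of order $\eps^{-1}$, and in fact of surviving substantially longer than the typical lifetime $\tfrac{1}{\sqrt 2}\eps^{-1}$, because $L_t = c t^{1/3}$ and at time $t \approx (y/c)^3 \eps^{-1}$ the boundary $L_t$ reaches height $y\eps^{-1/3}$. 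Choosing $y$ so that $(y/c)^3 > 1/\sqrt 2$ (say), a particle at that height will, with probability bounded below, still be alive when the clock reads a multiple of $\eps^{-1}$ larger than what Corollary \ref{survivalcor} permits with the required probability.

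Concretely, first I would use the coupling of Section \ref{couplesec}: by \eqref{Lcouple}, the process ${\bf X}^{L,\Delta}$, a BBM${}^+(-\sqrt 2)$ started from $D^\rho - \Delta$, is dominated by ${\bf X}$ up to time $\eps^{-1}\Delta$, and by \eqref{Uextinct} we get $\min(\eps^{-1}\Delta,\zeta^{L,\Delta}) \le \zeta$. So it suffices to show that if $M(D^\rho) > J\eps^{-1/3}$ then, after removing $\Delta$ and shifting, the resulting critical-drift process survives to a time of order $\eps^{-1}$ with probability bounded below. For this I would take $\Delta$ a fixed constant, so that on the event $\{M(D^\rho) > J\eps^{-1/3}\}$ the shifted configuration still contains a particle at height at least $J\eps^{-1/3} - \Delta$. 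Then apply the critical-drift survival asymptotics \eqref{critsurvive}: fix a large time horizon $t = s_0\eps^{-1}$ with $s_0$ to be chosen, so that $L_t = c s_0^{1/3}\eps^{-1/3}$; if $J$ is chosen with $J - \Delta \ge c s_0^{1/3} + x_0$ for a suitable $x_0$, then by \eqref{critsurvive} a single particle at height $\ge L_t + x_0$ survives until time $t$ with probability at least $\phi(x_0)/2 > 0$ once $\eps$ is small (so that $t$ is large). Since ${\bf X}^{L,\Delta}$ dominates a single BBM${}^+(-\sqrt 2)$ started from that particle, we get $\P^\rho_{\mathcal{D}^\rho}(\zeta^{L,\Delta} > s_0\eps^{-1} \mid M({\bf X}_0) > J\eps^{-1/3}) \ge \phi(x_0)/2$; combined with \eqref{Uextinct} and choosing $\Delta$ so that $\eps^{-1}\Delta > s_0\eps^{-1}$, i.e. $\Delta > s_0$, this forces $\P^\rho_{\mathcal{D}^\rho}(\zeta > s_0 \eps^{-1}) \ge \tfrac12\phi(x_0)\,\P(M(D^\rho) > J\eps^{-1/3})$.

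Finally I would invoke Corollary \ref{survivalcor}: $\P^\rho_{\mathcal{D}^\rho}(\zeta > s_0\eps^{-1}) = \exp(-(\rho^2/2 - 1)s_0\eps^{-1}) = \exp(-(\sqrt 2 + \eps/2)s_0)$, which for $\eps$ small is at most $2\exp(-\sqrt 2 s_0)$. Picking $s_0$ large enough that $2\exp(-\sqrt 2 s_0) < \tfrac12 \phi(x_0)\eta$ (note $\phi(x_0)$ can be taken close to $1$ by taking $x_0$ large, and it does not depend on $s_0$; then $\Delta$ and $J$ are chosen in terms of $s_0$ and $x_0$) yields $\P(M(D^\rho) > J\eps^{-1/3}) < \eta$ for all sufficiently small $\eps$, as desired. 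The main obstacle is organizing the chain of parameter choices in the right order — $\eta$ and $x_0$ first (to pin $\phi(x_0)$ near $1$), then $s_0$ large (to kill the exponential from Corollary \ref{survivalcor}), then $\Delta > s_0$ and $J \ge c s_0^{1/3} + x_0 + \Delta$ — and making sure the coupling time window $[0,\eps^{-1}\Delta]$ in \eqref{Lcouple} comfortably contains the horizon $s_0\eps^{-1}$, which it does precisely because $\Delta > s_0$; everything else is a routine application of \eqref{critsurvive} and the exact law of $\zeta$.
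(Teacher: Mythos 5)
Your proposal is correct and follows essentially the same route as the paper: couple the \BBMabs{} down to a critical-drift BBM via \eqref{Lcouple}--\eqref{Uextinct}, use \eqref{critsurvive} (plus monotonicity in the starting point) to show a particle above $J\eps^{-1/3}$ survives to a time of order $\eps^{-1}$ with probability bounded below, and then cap this with the exponential law of $\zeta$ from Corollary \ref{survivalcor}. The only differences are bookkeeping: the paper takes the shift $\Delta=\eps t$ with $t=J\eps^{-1}$ and uses $\phi(0)$ together with the double limit $\lim_{J\to\infty}\limsup_{\eps\to 0}$, whereas you fix a horizon $s_0\eps^{-1}$, a constant shift $\Delta>s_0$, and the explicit exponential formula.
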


\begin{proof}
By \eqref{Uextinct} with $\Delta = \eps t$, we have $$\P_x^{\rho}(\zeta > t) \geq \P_{x - \eps t}^{\sqrt{2}}(\zeta > t).$$
Setting $x = J \eps^{-1/3}$ and $t = J \eps^{-1}$, we get $$\P_{J \eps^{-1/3}}^{\rho}(\zeta > J \eps^{-1}) \geq \P_{J \eps^{-1/3} - J}^{\sqrt{2}}(\zeta > J \eps^{-1}).$$
Choose $a$ such that $0 < a < \phi(0)$, where $\phi$ is the function from (\ref{critsurvive}).  By (\ref{critsurvive}), there is an $\eps_0$ such that if $0 < \eps < \eps_0$, then
$$\P^{\sqrt{2}}_{cJ^{1/3} \eps^{-1/3}}(\zeta > J \eps^{-1}) > a.$$
Therefore, if we choose $J$ large enough that $J \eps^{-1/3} - J > cJ^{1/3} \eps^{-1/3}$ for $0 < \eps < \eps_0$, then
$$\P_{J \eps^{-1/3}}^{\rho}(\zeta > J \eps^{-1}) > a$$
for $0 < \eps < \eps_0$.
It follows that for $0 < \eps < \eps_0$, $$\P_{\mathcal{D}^{\rho}}^{\rho}(\zeta > J \eps^{-1}) \geq a \P(M(D^{\rho}) > J \eps^{-1/3}).$$
Corollary \ref{survivalcor} implies that $$\lim_{J \rightarrow \infty} \lim_{\eps \rightarrow 0} \P_{\mathcal{D}^{\rho}}^{\rho}(\zeta > J \eps^{-1}) = 0,$$
so we also must have $$\lim_{J \rightarrow \infty} \limsup_{\eps \rightarrow 0} \P(M(D^{\rho}) > J \eps^{-1/3}) = 0,$$
which implies the statement of the lemma.
\end{proof}

\subsection{Comparison between \texorpdfstring{$M(D^{\rho})$ and $L_{T(D^{\rho})}$}{M and L}}

Our goal in this subsection is to prove Lemma \ref{biglemma}, which establishes that under the quasi-stationary distribution we constructed, the right-most particle $M(D^{\rho})$ is not too close to $L_{T(D^{\rho})}$.  This result is necessary for applying Lemma \ref{lemma2.11}.

\begin{lemma}\label{Tmonotone}
If $\nu_1 \prec \nu_2$, then $T(\nu_1) \leq T(\nu_2)$.
\end{lemma}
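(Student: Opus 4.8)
The plan is to verify that the time $t_\star := T(\nu_2)$ satisfies both conditions appearing in the definition \eqref{Tdef} of $T(\nu_1)$, namely $L_{t_\star} \ge M(\nu_1)+2$ and $Z(\nu_1, L_{t_\star}) \le 1/2$. As noted in the text following \eqref{Tdef}, the infimum defining $T(\nu)$ is attained, and by \eqref{dLx} the map $L \mapsto Z(\nu,L)$ is non-increasing on $[M(\nu)+2,\infty)$ (since every particle of $\nu$ lies in $(0,L-2]$ there); together with continuity and monotonicity of $t \mapsto L_t$, this shows that for each fixed $\nu$ the set of times satisfying those two conditions is exactly $[T(\nu),\infty)$. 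Hence it suffices to prove $t_\star$ is admissible for $\nu_1$, which then forces $T(\nu_1) \le t_\star = T(\nu_2)$.

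First I would record monotonicity of $M$ under $\prec$: if $\nu_1([a,\infty)) > 0$ then $\nu_2([a,\infty)) \ge \nu_1([a,\infty)) > 0$, so $M(\nu_1) \le M(\nu_2)$. Since the infimum defining $t_\star = T(\nu_2)$ is attained, $L_{t_\star} \ge M(\nu_2)+2 \ge M(\nu_1)+2$ and $Z(\nu_2,L_{t_\star}) \le 1/2$. The first of these is precisely the first condition for $\nu_1$.

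It remains to show $Z(\nu_1,L) \le Z(\nu_2,L)$ with $L := L_{t_\star}$. The key observation is that although the weight $z(\cdot,L)$ from \eqref{zdef} is not monotone on all of $(0,L)$, it \emph{is} non-decreasing on the window $(0,L-2]$: by \eqref{dzx}, $\frac{\partial z}{\partial x}(x,L) \ge z(x,L)\bigl(\sqrt 2 - \tfrac{\pi}{4}\bigr) \ge 0$ for $0 < x \le L-2$, because $\sqrt 2 > \pi/4$ and $z \ge 0$. Since $M(\nu_1) \le M(\nu_2) \le L-2$, both $\nu_1$ and $\nu_2$ are supported in $(0,L-2]$, so $Z(\nu_i,L) = \int_{(0,\infty)} g\,d\nu_i$, where $g$ is the non-decreasing nonnegative function on $(0,\infty)$ that agrees with $z(\cdot,L)$ on $(0,L-2]$ and equals the constant $z(L-2,L)$ on $(L-2,\infty)$. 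Then I would invoke the elementary fact that $\nu_1 \prec \nu_2$ implies $\int g\,d\nu_1 \le \int g\,d\nu_2$ for every non-decreasing $g \ge 0$: representing $\int g\,d\nu = g(0^+)\,\nu((0,\infty)) + \int_{(0,\infty)} \nu((a,\infty))\,dg(a)$ (Lebesgue--Stieltjes, Fubini), and noting that $\nu_i((0,\infty)) = \lim_{y\downarrow 0}\nu_i([y,\infty))$ and $\nu_i((a,\infty)) = \lim_{y\downarrow a}\nu_i([y,\infty))$ are each non-decreasing in $i$ by Definition \ref{dominates}. This gives $Z(\nu_1,L) \le Z(\nu_2,L) \le 1/2$, as required.

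\textbf{Main obstacle.} The one genuinely non-routine point is realizing that the comparison must be carried out on the window $(0,L-2]$, on which $z(\cdot,L)$ is increasing and in which, by the very structure of the definition \eqref{Tdef} (the requirement $L_t \ge M(\nu)+2$), all particles of $\nu_1$ and $\nu_2$ necessarily sit; attempting to compare $Z(\nu_1,L)$ and $Z(\nu_2,L)$ via global monotonicity of $z(\cdot,L)$ would fail, since $z(\cdot,L)$ decreases near $L$. Everything else — monotonicity of $M$, the interval structure of the admissible-time set, and the transfer of $\prec$ to integrals of monotone nonnegative functions for finite (not necessarily probability) measures — is bookkeeping.
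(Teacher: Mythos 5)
Your proof is correct and follows essentially the same route as the paper: monotonicity of $M$ under $\prec$, the observation that the lower bound in \eqref{dzx} makes $z(\cdot,L)$ non-decreasing on $(0,L-2]$ where all particles of both configurations must lie, and the resulting comparison $Z(\nu_1,L)\le Z(\nu_2,L)$. The extra bookkeeping you supply (the interval structure of the admissible-time set and the explicit Stieltjes argument for transferring $\prec$ to integrals of non-decreasing functions) is sound but not needed beyond what the paper's shorter argument already uses.
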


\begin{proof}
It is immediate from the definitions that $M(\nu_1) \leq M(\nu_2)$.  Therefore, if $L_{T(\nu_1)} \leq M(\nu_2) + 2$, then $T(\nu_1) \leq T(\nu_2)$.  On the other hand, for all $t$ such that $L_t > M(\nu_2) + 2$, the fact that the lower bound in \eqref{dzx} holds for all $x \in (0, L - 2)$ implies that $Z(\nu_1, L_t) \leq Z(\nu_2, L_t)$.  Therefore, the conclusion that $T(\nu_1) \leq T(\nu_2)$ also must hold when $L_{T(\nu_1)} > M(\nu_2) + 2$.
\end{proof}

\begin{lemma}\label{biglemma}
For all $A > 0$, we have
\begin{equation}\label{RDrho}
\lim_{\eps \rightarrow 0} \P(M(D^{\rho}) \geq L_{T(D^{\rho})} - A) = 0.
\end{equation}
\end{lemma}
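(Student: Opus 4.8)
\textbf{Proof proposal for Lemma~\ref{biglemma}.}

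The plan is to combine the quasi-stationarity of $\mathcal{D}^\rho$ with the coupling of Section~\ref{couplesec} and Lemma~\ref{lemma2.12}, which is precisely the desired statement for branching Brownian motion with critical drift. First I would use quasi-stationarity to reduce to a statement about the process after a fixed time $d>0$. By Corollary~\ref{survivalcor}, if ${\bf X}_0 \sim \mathcal{D}^\rho$ then the conditional law of ${\bf X}_d$ given $\zeta > d$ is again $\mathcal{D}^\rho$, and $\P^\rho_{\mathcal{D}^\rho}(\zeta > d) = e^{-(\rho^2/2-1)d}\to 1$ as $\eps\to 0$; hence for any fixed $d$,
\[
  \P\big(M(D^\rho)\ge L_{T(D^\rho)}-A\big) = \P^\rho_{\mathcal{D}^\rho}\big(M({\bf X}_d)\ge L_{T({\bf X}_d)}-A \,\big|\, \zeta>d\big) \le 2\,\P^\rho_{\mathcal{D}^\rho}\big(M({\bf X}_d)\ge L_{T({\bf X}_d)}-A\big)
\]
for $\eps$ small, so it suffices to bound the unconditional probability on the right, with $d$ to be chosen.

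Next I would bring in the coupling. Under ${\bf P}^\rho_{\mathcal{D}^\rho}$, run the \BBMabs{} ${\bf X}$ alongside the BBM$^+(-\sqrt 2)$ process ${\bf X}^U$ started from the same configuration, so that ${\bf X}_d \prec {\bf X}^U_d$ by \eqref{Ucouple}, and hence $M({\bf X}_d)\le M({\bf X}^U_d)$ and, by Lemma~\ref{Tmonotone}, $T({\bf X}_d)\le T({\bf X}^U_d)$. Fix a target $\delta>0$ and apply Lemma~\ref{lemma2.12} to ${\bf X}^U$ with parameters $\delta$ and a large constant $A'$ (to be fixed below): with probability at least $1-\delta$, either $M({\bf X}^U_d)<L_{T({\bf X}^U_d)}-A'$ or $T({\bf X}^U_d)<t_1$. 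The second alternative has probability $\to 0$: on $\{\zeta>d\}$ one has ${\bf X}_d\sim\mathcal{D}^\rho$, so $T({\bf X}^U_d)\ge T({\bf X}_d)$ has the law of $T(D^\rho)$ on this event, and $\P(T(D^\rho)<k\eps^{-1})<\delta$ by Lemma~\ref{T0upper}, while $k\eps^{-1}>t_1$ once $\eps$ is small. Combining, with probability tending to at least $1-2\delta$ we have $M({\bf X}_d)\le M({\bf X}^U_d)<L_{T({\bf X}^U_d)}-A'$. Since $\{M({\bf X}_d)\ge L_{T({\bf X}_d)}-A\}\subseteq\{M({\bf X}^U_d)\ge L_{T({\bf X}^U_d)}-A'\}\cup\{L_{T({\bf X}^U_d)}-L_{T({\bf X}_d)}>A'-A\}$, it then remains to control the overshoot $L_{T({\bf X}^U_d)}-L_{T({\bf X}_d)}$.

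For the overshoot I would work at the level of the functional $Z$. By Lemma~\ref{derzlemma}, $\partial z/\partial L \le -z/8$, so $Z(\nu,\cdot)$ decays at exponential rate at least $1/8$ past $M(\nu)+2$; consequently it is enough to prove that $Z({\bf X}^U_d,L)\le C\,Z({\bf X}_d,L)+\tfrac14$ for some universal constant $C$ and all $L$ in the range where $L_{T({\bf X}_d)}$ lies (which is of order $\eps^{-1/3}$ by Lemmas~\ref{T0upper} and~\ref{M0bound}), because then the inequalities $Z({\bf X}^U_d,L_{T({\bf X}^U_d)})\le 1/2$ and $Z({\bf X}_d,L)\ge 1/8$ become incompatible once $L-L_{T({\bf X}_d)}>8\ln(4C)$, giving $L_{T({\bf X}^U_d)}-L_{T({\bf X}_d)}\le 8\ln(4C)$; one then chooses $A'=A+8\ln(4C)$. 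The particles common to ${\bf X}_d$ and ${\bf X}^U_d$ differ only by a rightward shift of $\eps d\to 0$, which by the lower bound in \eqref{dzx} alters their $z$-weights by a factor $e^{O(\eps d)}\to 1$. The remaining ``extra'' particles of ${\bf X}^U_d$ are exactly those whose trajectory dipped below level $\eps d$ (in the ${\bf X}^U$ frame) before time $d$, and one must show their total $z$-weight is at most $\tfrac14$ with high probability; using the standard $L^1$-bound $\E\big[Z(\text{killed BBM}^+(-\sqrt2)_t\text{ from }x,L)\big]\le z(x,L)$ together with $z(x,L)\le \sqrt2\pi x\,e^{\sqrt2(x-L)}$, which is exponentially small in $L$ for $x\le\eps d$, this reduces to controlling how many lineages reach level $\eps d$ before time $d$.

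The last point is the main obstacle. The only a priori information available at this stage is $N(D^\rho)\le e^{C_2/\sqrt\eps}$ with high probability from \eqref{meanN}, and this is too weak to be combined naively with the $e^{-\sqrt2 L}=e^{-\Theta(\eps^{-1/3})}$ smallness of $z(\eps d,L)$; a cruder ``expectation times supremum'' estimate would not close. One therefore needs a sharper argument that lineages which come within $\eps d$ of the absorbing barrier are genuinely rare because they are overwhelmingly likely to be absorbed shortly afterwards --- for instance by conditioning at the stopping line where a lineage first reaches level $\eps d$ and invoking that the survival probability of BBM$^+(-\sqrt2)$ started at height $\eps d$ tends to $0$ --- or else by bypassing the direct $Z$-comparison altogether and instead comparing the extinction times $\zeta$ and $\zeta^U$ through the lower coupling of \eqref{Lcouple}--\eqref{Uextinct} (taking $\Delta$ a large constant), using that the critical survival time from ${\bf X}_d$ lies between $\zeta-d$ and $\zeta^U-d$ and hence controls $T({\bf X}_d)$ up to lower-order corrections. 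Extracting a constant-order bound on $L_{T({\bf X}^U_d)}-L_{T({\bf X}_d)}$ from one of these routes is where the real work lies; the reductions above and the application of Lemmas~\ref{lemma2.11}--\ref{lemma2.12} are then bookkeeping.
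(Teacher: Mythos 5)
You have assembled the right ingredients (the coupling of Section~\ref{couplesec}, Lemma~\ref{lemma2.12}, Lemma~\ref{T0upper}, quasi-stationarity, and the bound \eqref{meanN}), but your argument has a genuine gap, and you have located it yourself: everything hinges on a constant-order bound for $L_{T({\bf X}^U_d)}-L_{T({\bf X}_d)}$, i.e.\ on showing that the extra particles of ${\bf X}^U_d$ (those whose lineage dipped within $\eps d$ of the barrier) contribute at most an $O(1)$ amount to $Z$ at the scale $L\asymp\eps^{-1/3}$, and none of the routes you sketch is carried out. The difficulty is structural and comes from your choice of the \emph{upper} coupling: from ${\bf X}_d\prec{\bf X}^U_d$ you get $T({\bf X}_d)\le T({\bf X}^U_d)$, which is the unfavorable direction, so Lemma~\ref{lemma2.12} applied to ${\bf X}^U$ tells you the critical maximum is well below the critical $L_T$, but says nothing about $M({\bf X}_d)$ versus the \emph{smaller} level $L_{T({\bf X}_d)}$ unless you control the overshoot. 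As you note, the naive expectation bound ($e^{C_2\eps^{-1/2}}$ lineages times $z$-weight $e^{-\Theta(\eps^{-1/3})}$) does not close, and neither the stopping-line idea nor the extinction-time comparison is developed far enough to replace it.

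The paper avoids this obstacle by using the \emph{lower} coupling instead: it compares ${\bf X}_d$ with ${\bf X}^{L,\eps d}_d$, a critical BBM started from $D^\rho-\eps d$, so that ${\bf X}_d\succ{\bf X}^{L,\eps d}_d$ and hence $T({\bf X}^{L,\eps d}_d)\le T({\bf X}_d)$, i.e.\ $L_{T({\bf X}^{L,\eps d}_d)}\le L_{T({\bf X}_d)}$ — the favorable direction. On the event where the two maxima coincide, $\{M({\bf X}_d)\ge L_{T({\bf X}_d)}-A\}$ then directly implies $\{M({\bf X}^{L,\eps d}_d)\ge L_{T({\bf X}^{L,\eps d}_d)}-A\}$ together with $T({\bf X}^{L,\eps d}_d)\ge t_1$ (using $T({\bf X}_d)>\eps^{-7/8}$, controlled via Lemma~\ref{T0upper} and quasi-stationarity), so Lemma~\ref{lemma2.12} applies with no overshoot control at all. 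The only residual event is that the maxima differ, which forces the top particle of ${\bf X}_d$ both to have an ancestor within $\eps d$ of the origin and to climb by about $c\eps^{-7/24}$ within time $d$; its Gaussian probability $Ce^{-C'\eps^{-7/12}}$, multiplied by $e^d$ (many-to-one) and by the expected particle count $e^{C_2\eps^{-1/2}}$ from \eqref{meanN}, tends to zero because $7/12>1/2$. In other words, the crude ``expectation times tail'' estimate that fails for your total-$Z$-weight comparison does work once the bad event is reduced to a single abnormally high dipped particle; this reduction, enabled by the choice of the lower rather than the upper coupling, is the missing idea in your proposal.
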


\begin{proof}
Let $\delta > 0$.  Now choose $t_1$ and $d$ as in Lemma \ref{lemma2.12}.  Recall from the coupling in Section~\ref{couplesec} that under $\P_{\mathcal{D}^{\rho}}^{\rho}$, the process $({\bf X}_t^{L, \eps d}, t \geq 0)$ is a BBM${}^+(-\sqrt{2})$ whose initial distribution has the same law as $D^{\rho} - \eps d$.  By Lemma \ref{lemma2.12},
\begin{equation}\label{fromms}
\P^{\rho}_{\mathcal{D}^{\rho}}(\{M({\bf X}_d^{L, \eps d}) \geq L_{T({\bf X}_d^{L, \eps d})} - A\} \cap \{T({\bf X}_d^{L, \eps d}) \geq t_1\}) < \delta.
\end{equation}
Now \eqref{Lcouple} implies that ${\bf X}_d \succ {\bf X}_d^{L, \eps d}$.  Therefore, by Lemma \ref{Tmonotone}, we have $T({\bf X}_d^{L, \eps d}) \leq T({\bf X}_d)$ and thus $L_{T({\bf X}_d^{L, \eps d})} \leq L_{T({\bf X}_d)}$.  Now
\begin{align}\label{3terms}
&\P^{\rho}_{\mathcal{D}^{\rho}}(M({\bf X}_d) \geq L_{T({\bf X}_d)} - A) \nonumber \\
&\quad \leq \P^{\rho}_{\mathcal{D}^{\rho}}(T({\bf X}_d) \leq \eps^{-7/8}) \nonumber \\
&\quad \quad + \P^{\rho}_{\mathcal{D}^{\rho}}(\{T({\bf X}_d) > \eps^{-7/8}\} \cap \{M({\bf X}_d) \neq M({\bf X}_d^{L, \eps d})\} \cap \{M({\bf X}_d) \geq L_{T({\bf X}_d)} - A\}) \nonumber \\
&\quad \quad \quad + \P^{\rho}_{{\cal D}^\rho}(\{T({\bf X}_d) > \eps^{-7/8}\} \cap \{M({\bf X}_d) = M({\bf X}_d^{L, \eps d})\} \cap \{M({\bf X}_d) \geq L_{T({\bf X}_d)} - A\}).
\end{align}

We separately bound these three terms.  By Corollary \ref{survivalcor}, under $\P^{\rho}_{\mathcal{D}^{\rho}}$, the total variation distance between $T({\bf X}_0)$ and $T({\bf X}_d)$ is at most $1 - e^{-(\rho^2/2 - 1)d}$.  Therefore, by Lemma \ref{T0upper},
\begin{align}\label{term1}
\limsup_{\eps \rightarrow 0} \P^{\rho}_{\mathcal{D}^{\rho}}(T({\bf X}_d) \leq \eps^{-7/8}) &\leq \limsup_{\eps \rightarrow 0} \big( \P^{\rho}_{\mathcal{D}^{\rho}}(T({\bf X}_0) \leq \eps^{-7/8}) + (1 - e^{-(\rho^2/2 - 1)d}) \big) \nonumber \\
&= \limsup_{\eps \rightarrow 0} \P(T(D_{\rho}) \leq \eps^{-7/8}) \nonumber \\
&= 0.
\end{align}

To bound the second term, note that the particles in ${\bf X}_d^{L, \eps d}$ are the same as the particles in ${\bf X}_d$, except that some particles may be missing from the former configuration because of additional killing at the origin.  Any particle that is in ${\bf X}_d$ but not in ${\bf X}_d^{L, \eps d}$ must have an ancestor that passes within $\eps d$ of the origin.  Therefore, if $M({\bf X}_d) \neq M({\bf X}_d^{L, \eps d})$, then the particle located at $M({\bf X}_d)$ must have an ancestor that passes within $\eps d$ of the origin between time $0$ and time $d$.  In particular, if $M({\bf X}_d) \geq L_{T({\bf X}_d)} - A$ and $T({\bf X}_d) > \eps^{-7/8}$, then there must be a particle which gets to a position $\eps d$ or smaller, and then increases by at least $c \eps^{-7/24} - A - \eps d$, which for $c_0 < c$ and sufficiently small $\eps$ is at least $c_0 \eps^{-7/24}$.  If a particle is below $\eps d$, then standard Gaussian tail estimates imply that there are positive constants $C$ and $C'$ such that the probability that this particle increases by at least $c_0 \eps^{-7/24}$ by time $d$ is at most $C e^{-C' \eps^{-7/12}}$.  By the many-to-one lemma, accounting for branching multiplies this probability by at most the constant factor of $e^d$.  By \eqref{meanN}, the expected number of particles at time zero is bounded above by $e^{C_2 \eps^{-1/2}}$.  Using Markov's inequality, it follows that
\begin{align}\label{term2}
&\limsup_{\eps \rightarrow 0} \P^{\rho}_{\mathcal{D}^{\rho}}(\{T({\bf X}_d) > \eps^{-7/8}\} \cap \{M({\bf X}_d) \neq M({\bf X}_d^{L, \eps d})\} \cap \{M({\bf X}_d) \geq L_{T({\bf X}_d)} - A\}) \nonumber \\
&\hspace{3.4in}\leq \limsup_{\eps \rightarrow 0} Ce^d e^{-C' \eps^{-7/12}} e^{C_2 \eps^{-1/2}} = 0.
\end{align}

It remains to bound the third term.  Because $M({\bf X}_d^{L, \eps d}) \leq T({\bf X}_d^{L, \eps d}) + 2$ by definition, on the event $\{T({\bf X}_d) > \eps^{-7/8}\} \cap \{M({\bf X}_d) = M({\bf X}_d^{L, \eps d})\} \cap \{M({\bf X}_d) \geq L_{T({\bf X}_d)} - A\}$, we have for sufficiently small $\eps$, $$T({\bf X}_d^{L, \eps d}) \geq M({\bf X}_d^{L, \eps d}) - 2 = M({\bf X}_d) - 2 \geq L_{T({\bf X}_d)} - A - 2 \geq c \eps^{-7/24} - A - 2 \geq t_1.$$  Recalling also that $L_{T({\bf X}_d^{L, \eps d})} \leq L_{T({\bf X}_d)}$, it now follows from Lemma \ref{lemma2.12} that
\begin{align}\label{term3}
&\limsup_{\eps \rightarrow 0} \P^{\rho}_{{\cal D}^\rho}(\{T({\bf X}_d) > \eps^{-7/8}\} \cap \{M({\bf X}_d) = M({\bf X}_d^{L, \eps d})\} \cap \{M({\bf X}_d) \geq L_{T({\bf X}_d)} - A\}) \nonumber \\
&\hspace{1in}\leq \limsup_{\eps \rightarrow 0} \P^{\rho}_{{\cal D}^\rho}( \{M({\bf X}_d^{L, \eps d}) \geq  L_{T({\bf X}_d^{L, \eps d})} - A\} \cap \{T({\bf X}_d^{L, \eps d}) \geq t_1\}) \leq \delta.
\end{align}

It now follows from \eqref{3terms}, \eqref{term1}, \eqref{term2}, and \eqref{term3} that
$$\limsup_{\eps \rightarrow 0} \P^{\rho}_{\mathcal{D}^{\rho}}(M({\bf X}_d) \geq L_{T({\bf X}_d)} - A) \leq \delta.$$
By Corollary \ref{survivalcor},
$$\P(M(D^{\rho}) \geq L_{T(D^{\rho})} - A) \leq \P^{\rho}_{\mathcal{D}^{\rho}}(M({\bf X}_d) \geq L_{T({\bf X}_d)} - A) + (1 - e^{-(\rho^2/2 - 1)d}).$$
It follows that $$\limsup_{\eps \rightarrow 0} \P(M(D^{\rho}) \geq L_{T(D^{\rho})} - A) \leq \delta.$$
Because $\delta > 0$ was arbitrary, the result follows.
\end{proof}

\subsection{A truncated process}
\label{subsec:truncation}

Given $t > 0$, it will often be useful to consider a truncated process in which particles at time $s \in [0, t]$ are killed not only if they reach $0$ but also if they reach an upper boundary at $L_t(s)$.  Note that the process is guaranteed to die out before time $t$ because $L_t(t) = 0$.   We will denote probabilities and expectations for this process when the initial condition has law $\mathcal{D}$ and the drift is $-\rho$ by $\P^{\rho, t}_{\mathcal{D}}$ and $\P^{\rho, t}_{\mathcal{D}}$.  As in the case when particles are absorbed only at the origin, for $\nu \in \mathfrak{P}$ and $x > 0$, we write the subscripts $\nu$ and $x$ in place of $\delta_{\nu}$ and $\delta_{\delta_x}$.  Also, when $0 < r < t$, we will write $\P^{\rho, t}_{x,r}$ and $\E^{\rho, t}_{x,r}$ when the process starts from one particle at time $r$ located at $x$.

For $0 \leq r < s < t$, we denote by $q_{r,s}^{\rho, t}(x, y)$ the density for the process started from one particle at $x$ at time $r$, meaning that if $A$ is a Borel subset of $(0, L_t(s))$, then the expected number of particles in the set $A$ at time $t$ is $\int_A q_{r,s}^{\rho, t}(x,y) \: dy$.  We will write $q_s^{\rho, t}(x,y)$ in place of $q_{0,s}^{\rho, t}(x,y)$.  We review here some results from \cite{ms22} in the critical case $\rho = \sqrt{2}$.  Let $w_s(x,y)$ be the density for a single Brownian particle killed at $0$ and $1$, meaning that if a Brownian particle starts at $x \in (0, 1)$ and is killed upon reaching $0$ or $1$, then the probability that it is in some Borel subset $A$ of $(0,1)$ at time $s$ is given by $\int_A w_s(x,y) \: dy$.  According to Lemma 5 in \cite{bbs13} or equations (5.3) and (5.5) of \cite{ms22},
\begin{equation}\label{wseq}
w_s(x,y) = 2 e^{-\pi^2 s/2} \sin(\pi x) \sin(\pi y)(1 + d_s(x,y)),
\end{equation}
where
\begin{equation}\label{dbound}
|d_s(x,y)| \leq \sum_{n=2}^{\infty} n^2 e^{-\pi^2 (n^2 - 1)s/2}.
\end{equation}
For $0 \leq r < s < t$, define
\begin{equation}\label{taudef}
\tau_t(r,s) = \int_r^s \frac{1}{L_t(u)^2} \: du = \frac{2}{\pi^2} \cdot \sqrt{2}(L_t(r) - L_t(s)).
\end{equation}
Then, Proposition 5.4 of \cite{ms22} in the case $A = 0$, along with the Girsanov transformation explained at the beginning of section 5.2 of \cite{ms22}, give that for $0 \leq r < s < t$, $0 < x < L_t(r)$, and $0 < y < L_t(s)$, we have
\begin{equation}\label{qrs}
q_{r,s}^{\sqrt{2}, t}(x,y) = \frac{e^{O((t - s)^{-1/3})}}{L_t(r)^{1/2} L_t(s)^{1/2}} e^{\sqrt{2}(x - y)} w_{\tau_t(r,s)}\bigg( \frac{x}{L_t(r)}, \frac{y}{L_t(s)} \bigg).
\end{equation}
A consequence of Girsanov's Theorem is that if $\rho = \sqrt{2} + \eps$, then
\begin{equation}\label{qGirsanov}
q_{r,s}^{\rho, t}(x,y) = q_{r,s}^{\sqrt{2}, t}(x,y) \cdot e^{\eps(x - y) -\sqrt{2} \eps (s - r) - \eps^2 (s - r)/2}.
\end{equation}
The following result follows immediately from (\ref{taudef}), (\ref{qrs}) and (\ref{qGirsanov}).  Here we assume $\rho = \sqrt{2} + \eps$, where $\eps > 0$.

\begin{lemma}\label{densitybound}
Fix $\delta > 0$, and $\beta > 0$.  There exist positive numbers $t_0$ and $C_5$ such that for all $t \geq t_0$, all $r$ and $s$ such that $s \leq (1 - \delta) t$, $\delta t^{2/3} \leq s - r \leq \delta^{-1} t^{2/3}$, and $\eps(s - r) \leq \beta$, we have
$$q_{r,s}^{\rho, t}(x,y) \leq \frac{C_5}{L_t} e^{\sqrt{2} x} \sin \bigg( \frac{\pi x}{L_t(r)} \bigg) e^{-\sqrt{2} y} \sin \bigg( \frac{\pi y}{L_t(s)} \bigg)$$
for all $x$ and $y$ such that $0 < x < L_t(r)$ and $0 < y < L_t(s)$.
If instead we let $t \rightarrow \infty$ and allow $\eps$, $r$, and $s$ to depend on $t$, then if $s \ll t$, $s - r \gg t^{2/3}$, and $\eps (s - r) \rightarrow 0$, we have $$q_{r,s}^{\rho, t}(x,y) = \frac{2(1 + o(1))}{L_t} e^{\sqrt{2}(L_t(s) - L_t(r))} e^{\sqrt{2} x} \sin \bigg( \frac{\pi x}{L_t(r)} \bigg) e^{-\sqrt{2} y} \sin \bigg( \frac{\pi y}{L_t(s)} \bigg)$$
for all $x$ and $y$ such that $0 < x < L_t(r)$ and $0 < y < L_t(s)$.
\end{lemma}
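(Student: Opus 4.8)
The plan is to combine the density formulas (\ref{qrs}) and (\ref{qGirsanov}) with the expansion (\ref{wseq}) of the killed‑Brownian density, then to simplify the resulting exponential prefactor using the identity (\ref{taudef}), and finally to estimate the remaining factors one at a time. Writing $\tau = \tau_t(r,s)$ and substituting, one obtains
\begin{equation*}
q_{r,s}^{\rho, t}(x,y) = \frac{2\, e^{O((t-s)^{-1/3})}}{L_t(r)^{1/2} L_t(s)^{1/2}}\, e^{(\sqrt{2} + \eps)(x-y)}\, e^{-\pi^2 \tau/2}\, e^{-\sqrt{2}\eps(s-r) - \eps^2(s-r)/2}\, \sin\!\Big(\tfrac{\pi x}{L_t(r)}\Big) \sin\!\Big(\tfrac{\pi y}{L_t(s)}\Big) \big(1 + d_\tau\big),
\end{equation*}
where $d_\tau = d_\tau\big(x/L_t(r), y/L_t(s)\big)$. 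By (\ref{taudef}) one has $e^{-\pi^2\tau/2} = e^{\sqrt{2}(L_t(s) - L_t(r))}$, and since $e^{(\sqrt{2}+\eps)(x-y)} = e^{\sqrt{2}x} e^{-\sqrt{2}y} e^{\eps(x-y)}$, both claimed formulas will follow once I control the factors
\begin{equation*}
\frac{L_t}{L_t(r)^{1/2} L_t(s)^{1/2}}, \qquad e^{O((t-s)^{-1/3})}, \qquad e^{\eps(x-y)}, \qquad e^{-\sqrt{2}\eps(s-r) - \eps^2(s-r)/2}, \qquad 1 + d_\tau,
\end{equation*}
keeping $e^{\sqrt{2}(L_t(s)-L_t(r))}$ explicit.

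For the uniform bound I would argue as follows. The hypothesis $s \leq (1-\delta)t$ gives $t - r \geq t - s \geq \delta t$, hence $\delta^{1/3} L_t \leq L_t(s) \leq L_t(r) \leq L_t$; this bounds the first factor by $\delta^{-1/3}$ and makes $(t-s)^{-1/3} \leq (\delta t)^{-1/3}$, bounded for $t \geq t_0$. An elementary estimate, $L_t(r) - L_t(s) = \tfrac{c}{3}\int_r^s (t-u)^{-2/3}\, du \geq \tfrac{c(s-r)}{3} t^{-2/3} \geq \tfrac{c\delta}{3}$, shows that $\tau = \tfrac{2\sqrt{2}}{\pi^2}(L_t(r) - L_t(s))$ is bounded below by a positive constant depending only on $\delta$, so by (\ref{dbound}) the factor $1 + d_\tau$ is bounded. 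Since $0 < x < L_t(r)$ and $0 < y < L_t(s) \leq L_t(r)$ we get $|x - y| \leq L_t(r) \leq c t^{1/3}$, while $\eps(s-r) \leq \beta$ together with $s - r \geq \delta t^{2/3}$ forces $\eps \leq \beta \delta^{-1} t^{-2/3}$; hence $|\eps(x-y)| \leq \beta c \delta^{-1} t^{-1/3}$ is bounded. Finally $r < s$ gives $L_t(s) - L_t(r) \leq 0$, so $e^{\sqrt{2}(L_t(s)-L_t(r))} \leq 1$, and both $-\sqrt{2}\eps(s-r)$ and $-\tfrac12 \eps^2(s-r) = -\tfrac{\eps}{2}\,\eps(s-r)$ are $\leq 0$; collecting all of these with a constant $C_5$ depending only on $\delta$ and $\beta$ yields the first assertion.

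For the asymptotic statement I would check that under $s \ll t$, $s - r \gg t^{2/3}$, and $\eps(s-r) \to 0$ each of the five factors tends to $1$: $t - s \sim t \to \infty$ gives $L_t(r), L_t(s) \sim L_t$ and $(t-s)^{-1/3} \to 0$; the same bound $L_t(r) - L_t(s) \geq \tfrac{c(s-r)}{3} t^{-2/3} \to \infty$ gives $\tau \to \infty$, hence $d_\tau \to 0$ by (\ref{dbound}); since $s - r \to \infty$ and $\eps(s-r) \to 0$ we have $\eps \to 0$, so $\eps^2(s-r) = \eps\cdot \eps(s-r) \to 0$, and using $|x-y| \leq L_t(r) \le c t^{1/3}$ together with $\eps c t^{1/3} = \tfrac{c t^{1/3}}{s-r}\,\eps(s-r)$ and $t^{1/3}/(s-r) \to 0$, also $\eps(x-y) \to 0$. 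Absorbing the first factor and $1 + d_\tau$ into the constant $2$ produces the $2(1+o(1))$ and completes the proof. The only step calling for a genuine (if small) idea is the bound on $e^{\eps(x-y)}$: the two‑sided control of $s-r$ relative to $t^{2/3}$ is precisely what converts the budget $\eps(s-r) \le \beta$ on the extra drift into a bound on $\eps$ strong enough to dominate the $O(t^{1/3})$ spatial spread; everything else is routine tracking of error terms.
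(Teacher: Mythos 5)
Your proof is correct and follows exactly the route the paper intends: the paper simply asserts that the lemma "follows immediately from (\ref{taudef}), (\ref{qrs}) and (\ref{qGirsanov})," and your argument is precisely that combination, expanding $w_{\tau}$ via (\ref{wseq})--(\ref{dbound}) and checking that each remaining factor is bounded (resp.\ tends to $1$) under the stated hypotheses. The factor-by-factor verification, including the conversion of $\eps(s-r)\le\beta$ into control of $e^{\eps(x-y)}$, is the right fleshing-out of the paper's one-line justification.
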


Let $$y(x,L) = \frac{x}{L} e^{\sqrt{2}(x - L)}, \qquad Y_t(s) = \sum_{u \in {\cal N}_s} y(X_s(u), L_t(s)).$$
We denote by $R_t(s)$ the number of particles killed at the upper boundary between times $0$ and $s$.

\begin{lemma}\label{meanR}
Fix $\delta > 0$ and $\beta > 0$.  Then there exist positive real numbers $t_0$ and $C_6$ such that
for all $t \geq t_0$, all $s$ such that $\delta t^{2/3} \leq s \leq (1 - \delta) t$, all $\rho$ such that $0 \leq (\rho - \sqrt{2})s \leq \beta$, and all initial configurations $\nu \in \mathfrak{P}$ for which $\nu([L_t, \infty)) = 0$, we have
$$\E_{\nu}^{\rho, t}[R_t(s)] \leq C_6 \bigg( \frac{s}{t} Z_t(0) + Y_t(0) \bigg).$$
\end{lemma}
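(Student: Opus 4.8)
The plan is to estimate $\E_{\nu}^{\rho,t}[R_t(s)]$ by a first-moment (many-to-one) computation, tracking the expected number of particles that cross the upper boundary $L_t(\cdot)$ before time $s$. First I would express $R_t(s)$ as an integral over time of the flux of particles across the moving boundary: heuristically, $\E_{\nu}^{\rho,t}[R_t(s)]$ equals an integral over $r \in (0,s)$ of the rate at which the expected density of the truncated process escapes at $L_t(r)$. Concretely, by the many-to-one lemma, the expected number of particles killed at the upper boundary in the time interval $[r, r+dr]$ can be written using the density $q_{0,r}^{\rho,t}(x_i, \cdot)$ near $y = L_t(r)$ together with the local boundary behavior of a killed Brownian motion; this is the same bookkeeping as in the corresponding lemma of \cite{ms22}, and I would follow that structure closely, with the one new feature that the drift is $-\rho = -\sqrt{2} - \eps$ rather than $-\sqrt{2}$.

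The key steps, in order, are as follows. (i) Reduce to a single initial particle at $x = x_i$ by linearity and the branching property, so that it suffices to bound the contribution of each $x_i \in (0, L_t)$ and sum, giving a bound in terms of $\sum_i z(x_i, L_t) = Z_t(0)$ and $\sum_i y(x_i, L_t) = Y_t(0)$. (ii) Split the time interval $(0,s)$ into an early part, say $r \in (0, \delta t^{2/3})$, and a late part $r \in [\delta t^{2/3}, s)$. For the early part, use crude estimates (Gaussian tail bounds together with the many-to-one lemma, accounting for branching by a factor $e^r \le e^{\delta t^{2/3}}$, which is negligible on the relevant scale) to show the contribution is controlled by $Y_t(0)$; the point is that only particles already close to $L_t$ can reach the boundary in time $O(t^{2/3})$, and $y(x,L)$ is precisely the right weight capturing proximity to the top in the Girsanov-tilted picture. (iii) For the late part, apply Lemma \ref{densitybound} with the given hypotheses $s \le (1-\delta)t$ and $0 \le (\rho-\sqrt{2})s \le \beta$ (so that $\eps(r' - r) \le \beta$ for the relevant time differences) to get, for each $r$ in this range, an upper bound on $q_{0,r}^{\rho,t}(x_i, y)$ of the form $\frac{C_5}{L_t} e^{\sqrt{2} x_i} \sin(\pi x_i / L_t) \, e^{-\sqrt{2} y} \sin(\pi y / L_t(r))$; integrating the boundary flux then produces a factor $\int_{\delta t^{2/3}}^{s} \frac{1}{L_t(r)^2}\, dr$ or similar, which is $O(s/t^{2/3}) \cdot O(t^{-2/3}) \cdot L_t \asymp s/t$ after inserting the normalization, yielding the $\frac{s}{t} Z_t(0)$ term. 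Summing over $i$ and combining the two parts gives the stated bound with a suitable $C_6$ and $t_0$.

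The main obstacle I expect is step (iii): carefully converting the density estimate from Lemma \ref{densitybound} into a bound on the \emph{flux} through the moving boundary $s \mapsto L_t(s)$, since the boundary is not flat and the escape rate involves the normal derivative of the killed heat kernel near $y = L_t(r)$, not just its value. Handling this cleanly requires either the explicit form of $w_s(x,y)$ near the boundary (using $\sin(\pi y / L_t(r)) \approx \pi (L_t(r) - y)/L_t(r)$ as $y \uparrow L_t(r)$, which makes the boundary-layer contribution integrable) or appealing directly to the corresponding flux computation in \cite{ms22} and then inserting the Girsanov correction factor $e^{\eps(x-y) - \sqrt{2}\eps r - \eps^2 r/2}$ from \eqref{qGirsanov}, which is bounded under the hypothesis $\eps s \le \beta$ since $x - y$ is bounded by $L_t = O(t^{1/3})$ and $\eps L_t \to 0$. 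Once the flux estimate is in hand, the time integration and the bookkeeping that turns $\sum_i e^{\sqrt{2} x_i}\sin(\pi x_i/L_t)$ into $Z_t(0)$ (up to the constant $\sqrt{2} L_t$ in the definition of $z$) and the early-time part into $Y_t(0)$ are routine.
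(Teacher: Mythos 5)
Your fallback route --- quoting the critical-drift boundary-hit estimate from \cite{ms22} and transferring it to $\rho=\sqrt{2}+\eps$ by inserting the Girsanov factor from \eqref{qGirsanov}, which is uniformly bounded above and below because $\eps s\le\beta$ and $\eps L_t=O(t^{-1/3})$ --- is exactly the paper's proof: the critical case is taken verbatim from the first inequality of Lemma~5.8 of \cite{ms22} together with \eqref{taudef}, and the subcritical case follows from the resulting two-sided comparison $C_7\, q_s^{\sqrt{2},t}(x,y)\le q_s^{\rho,t}(x,y)\le C_8\, q_s^{\sqrt{2},t}(x,y)$ applied to the expected number of trajectories hitting the upper boundary. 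If you take that branch, nothing about the moving boundary needs to be redone and the argument is complete.

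The self-contained route you describe first, however, has a genuine gap in the early-time window. For $r\le\delta t^{2/3}$, a many-to-one bound using only Gaussian tails and the branching factor $e^{r}$ (which is not ``negligible'': it is $e^{\delta t^{2/3}}$ and must be absorbed into the Gaussian exponent) gives, per initial particle at $x$, an expected number of boundary hits of order $e^{\sqrt{2}(x-L_t)}$; this is the content of \eqref{fct:maxDep} and of the martingale argument mentioned after it, and it is essentially sharp if the absorption at $0$ is ignored. But $e^{\sqrt{2}(x-L_t)}=(L_t/x)\,y(x,L_t)$, so for a configuration with a particle at height $x=O(1)$ and $s\asymp t^{2/3}$ this bound exceeds $\frac{s}{t}z(x,L_t)+y(x,L_t)\asymp L_t^{-1}e^{-\sqrt{2}L_t}$ by a factor of order $L_t$, and the lemma as stated does not follow from it. The factor $x/L_t$ in $y(x,L_t)$ comes precisely from the killing at the origin (a ballot/gambler's-ruin effect), so the early-time contribution must be estimated with the killed density or quoted from Lemma~5.8 of \cite{ms22}, not with free Gaussian tails. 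Your late-time bookkeeping ($\int \frac{dr}{L_t(r)^2}\asymp\tau_t(0,s)$ and $\tau_t(0,s)/L_t\asymp s/t$) is the right shape, but as you acknowledge it still requires either a boundary-derivative (flux) estimate for $w$ that you have not carried out, or the citation of \cite{ms22}; at that point the argument has collapsed back to the paper's.
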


\begin{proof}
When $\rho =\sqrt{2}$, the result follows from the first inequality in Lemma~5.8 of \cite{ms22} and equation (\ref{taudef}).
Now suppose instead $\rho = \sqrt{2} + \eps$, where $0 < \eps s \leq \beta$.  It follows from (\ref{qGirsanov}) and the assumptions on $s$ that there are positive constants $C_7$ and $C_8$ such that for all $x \in (0, L_t)$ and all $y \in (0, L_t(s))$, we have
\begin{equation}\label{qsrhosqrt(2)}
C_7 q_s^{\sqrt{2}, t}(x,y) \leq q_s^{\rho, t}(x,y) \leq C_8 q_s^{\sqrt{2}, t}(x,y).
\end{equation}
Because $\E_{\nu}^{\rho, t}[R_t(s)]$ is the expected number of particle trajectories that hit the upper boundary before time $s$, the result follows from (\ref{qsrhosqrt(2)}) and the result when $\rho = \sqrt{2}$.
\end{proof}

\begin{lemma}\label{momlem}
Let $f: [0, \infty) \rightarrow [0, \infty)$ and $g: [0,1] \rightarrow [0, \infty)$ be bounded nonzero measurable functions.  Let $s > 0$, $t > 0$, and $\nu \in \mathfrak{P}$ depend on $\eps$ in such a way that as $\eps \rightarrow 0$, we have $t \rightarrow \infty$, $t^{2/3} \ll s \ll t$, $\eps s \rightarrow 0$, and $\nu([L_t, \infty)) = 0$.  Then the following hold as $\eps \rightarrow 0$:
\begin{align}
\E_{\nu}^{\rho, t} \bigg[\sum_{u \in {\cal N}^+_s} f(X_s(u)) \bigg] &\sim \frac{\pi}{\sqrt{2} L_t^3} e^{\sqrt{2} L_t(s)} Z_t(0) \int_0^{\infty} 2y e^{-\sqrt{2} y} f(y) \: dy \label{mom1} \\
\E_{\nu}^{\rho, t} \bigg[ \sum_{u \in {\cal N}^+_s} e^{\sqrt{2} X_s(u)} g \bigg( \frac{X_s(u)}{L_t(s)} \bigg) \bigg] &\sim \frac{2\sqrt{2}}{\pi L_t} e^{\sqrt{2} L_t(s)} Z_t(0) \int_0^1 \frac{\pi}{2} \sin(\pi y) g(y) \: dy \label{mom2} \\
\Var_{\nu}^{\rho, t} \bigg(\sum_{u \in {\cal N}^+_s} f(X_s(u)) \bigg) &\lesssim \frac{e^{2 \sqrt{2} L_t(s)}}{L_t^6} \bigg( \frac{s}{t} Z_t(0) + \frac{1}{L_t} Z_t(0) + Y_t(0) \bigg) \label{mom3} \\
\Var_{\nu}^{\rho, t} \bigg( \sum_{u \in {\cal N}^+_s} e^{\sqrt{2} X_s(u)} g \bigg( \frac{X_s(u)}{L_t(s)} \bigg) \bigg) &\lesssim \frac{e^{2 \sqrt{2} L_t(s)}}{L_t^2} \bigg( \frac{s}{t} Z_t(0) + \frac{\log L_t}{L_t} Z_t(0) + Y_t(0) \bigg). \label{mom4}
\end{align}
\end{lemma}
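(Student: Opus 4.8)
The plan is to compute the first and second moments of the relevant additive functionals of the truncated BBM via the many-to-one and many-to-two lemmas, and then insert the density asymptotics from Lemma \ref{densitybound}. For the first-moment statements \eqref{mom1} and \eqref{mom2}, I would write, using the many-to-one lemma,
\begin{equation*}
\E_{\nu}^{\rho, t} \bigg[ \sum_{u \in {\cal N}^+_s} f(X_s(u)) \bigg] = \sum_i \int_0^{L_t(s)} q_s^{\rho, t}(x_i, y) f(y) \, dy,
\end{equation*}
and likewise for the exponential-times-$g$ functional. Plugging in the sharp asymptotic form of $q_s^{\rho, t}(x,y)$ from the second half of Lemma \ref{densitybound} (which applies precisely in the regime $t^{2/3} \ll s \ll t$, $\eps s \to 0$), the $y$-integral factors out: for \eqref{mom1}, $\sin(\pi y/L_t(s)) \approx \pi y / L_t(s)$ since $f$ is bounded and supported near $0$ relative to $L_t(s) \to \infty$, giving the integral $\int_0^\infty 2 y e^{-\sqrt{2} y} f(y)\, dy$ after collecting constants; for \eqref{mom2}, the substitution $y \mapsto y L_t(s)$ turns the sum into $\int_0^1 \frac{\pi}{2}\sin(\pi y) g(y)\, dy$ up to a constant. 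The $x_i$-dependence assembles into $\sum_i \sqrt{2} L_t e^{\sqrt{2}(x_i - L_t)} \sin(\pi x_i / L_t) = Z_t(0)$ after writing $e^{\sqrt{2}(L_t(s) - L_t)} \sin(\pi x_i/L_t)$ and noting $L_t(0) = L_t$; a short bookkeeping of the powers of $L_t$ and factors of $\sqrt{2}$ and $\pi$ then produces the stated prefactors $\frac{\pi}{\sqrt{2} L_t^3} e^{\sqrt{2} L_t(s)}$ and $\frac{2\sqrt{2}}{\pi L_t} e^{\sqrt{2} L_t(s)}$. One must check that $\nu([L_t,\infty)) = 0$ guarantees $0 < x_i < L_t(0)$ so the density formula is legitimate, and that $R_t$-type effects do not enter the first moment since we sum only over ${\cal N}^+_s$, i.e. survivors in the strip.

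For the variance bounds \eqref{mom3} and \eqref{mom4}, the standard route is the spine/many-to-two decomposition: the second moment of $\sum_{u} \varphi(X_s(u))$ splits into a diagonal term, equal to $\E[\sum_u \varphi(X_s(u))^2]$ (handled exactly as the first moment, with $f^2$ or the squared exponential weight), plus an off-diagonal term that integrates over the branching time $r \in [0,s]$ of the most recent common ancestor, weighted by the branching rate $2$ (or $1$ in the unscaled normalization — here $2$ after the scaling convention). Concretely, the off-diagonal contribution is
\begin{equation*}
2 \sum_i \int_0^s \int_0^{L_t(r)} q_r^{\rho, t}(x_i, w) \bigg( \int_0^{L_t(s)} q_{r,s}^{\rho, t}(w, y) \varphi(y)\, dy \bigg)^2 dw\, dr.
\end{equation*}
Using Lemma \ref{densitybound} to bound $q_{r,s}^{\rho,t}(w,y)$ by the product of sinusoidal profiles, the inner $y$-integral is $O\big(L_t^{-1} e^{\sqrt{2}(L_t(s) - L_t(r))} e^{\sqrt{2} w}\sin(\pi w/L_t(r)) \cdot c_\varphi\big)$, its square contributes $L_t^{-2} e^{2\sqrt{2}(L_t(s)-L_t(r))} e^{2\sqrt{2}w}\sin^2(\pi w/L_t(r))$, and then the $w$-integral against $q_r^{\rho,t}(x_i,w)$ must be carried out. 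The key technical point — and the main obstacle — is controlling this $w$-integral uniformly in $r$: for $r$ bounded away from both $0$ and $s$ one gets a clean bound proportional to $Z_t(0)/L_t^{k}$ for the appropriate $k$, but near $r = 0$ and near $r = s$ the Bessel/sinusoidal estimates degrade, and one must split the $r$-integral into boundary layers of width $\sim t^{2/3}$ and interior. The interior contributes the $\frac{s}{t} Z_t(0)$ term (the length of the interior range times the per-unit-$r$ bound), while the boundary layers near $r = s$ are where particles killed at the top boundary matter, producing the $Y_t(0)$ term via Lemma \ref{meanR}, and the boundary layer near $r = 0$ together with the diagonal term produces the $\frac{1}{L_t} Z_t(0)$ (resp. $\frac{\log L_t}{L_t} Z_t(0)$) correction — the extra $\log L_t$ in \eqref{mom4} arising because the weight $e^{\sqrt{2} x} g(x/L_t(s))$ puts mass near the top of the strip, so the relevant $w$-integral picks up a logarithmic divergence that is absent for the $f$-functional of \eqref{mom3}.

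I expect essentially all of this to be a transcription, with the scaling conversions of Section \ref{subsec:ms22}, of the second-moment computations already done for the critical case in \cite{ms22} (their Lemmas 5.8–5.11 or the analogous estimates), the only genuinely new ingredient being the Girsanov factor $e^{\eps(x-y) - \sqrt{2}\eps(s-r) - \eps^2(s-r)/2}$ from \eqref{qGirsanov}, which under the hypothesis $\eps s \to 0$ is $1 + o(1)$ uniformly and hence does not affect any of the asymptotics. So concretely I would: (i) state the many-to-one and many-to-two formulas in the truncated strip; (ii) prove \eqref{mom1} and \eqref{mom2} directly from Lemma \ref{densitybound} as above; (iii) for \eqref{mom3}–\eqref{mom4}, bound the diagonal term by the first-moment computation applied to $f^2$ and to $e^{2\sqrt 2 x}g^2$, and bound the off-diagonal term by splitting the $r$-integral into $[0, \delta t^{2/3}]$, $[\delta t^{2/3}, s - \delta t^{2/3}]$, and $[s - \delta t^{2/3}, s]$, applying Lemma \ref{densitybound} on the middle range and Lemma \ref{meanR} to absorb the contribution of particles near the upper boundary on the last range; (iv) collect the three pieces into the claimed $\lesssim$ bounds, being careful that the $e^{2\sqrt 2 L_t(s)}$ prefactor is common to all three and that the parenthesized factor is exactly $\frac{s}{t} Z_t(0) + (\text{boundary correction}) Z_t(0) + Y_t(0)$. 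The main obstacle, as noted, is the uniform-in-$r$ control of the $w$-integral in the boundary layers and the correct identification of the $L_t^{-1}$ versus $L_t^{-1}\log L_t$ correction; everything else is routine given the machinery of \cite{ms22}.
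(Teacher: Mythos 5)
Your treatment of \eqref{mom1} and \eqref{mom2} matches the paper's: many-to-one in the strip, the sharp second asymptotic in Lemma \ref{densitybound} (legitimate because $t^{2/3}\ll s\ll t$ and $\eps s\to 0$ kill the Girsanov factor), $\sin(\pi y/L_t(s))\sim \pi y/L_t(s)$ for the $f$-functional, the substitution $y\mapsto yL_t(s)$ for the $g$-functional, and reassembly of the $x$-dependence into $Z_t(0)$. The skeleton of your variance argument (diagonal term plus an off-diagonal integral over the branching time $r$, estimated with the strip densities and the estimates imported from \cite{ms22}) is also the paper's.

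The genuine gap is in how you propose to handle the two ends of the $r$-integral, and it is not just a bookkeeping slip. First, invoking Lemma \ref{meanR} for the layer near $r=s$ cannot work: that lemma bounds the expected number of particles absorbed at the upper boundary, whereas in the second-moment formula the killing is already built into the densities $q^{\rho,t}_{r,s}$; what is actually needed there is a small-$\tau$ estimate on the killed Brownian density $w_\tau$ beyond \eqref{wseq} (the paper uses $w_\tau(x,y)\lesssim \tau^{-3/2}xy\,e^{-(x-y)^2/2\tau}$ and, via a Chapman--Kolmogorov splitting, $w_\tau(x,y)\lesssim \tau^{-3}(1-x)y\,e^{-(x-y)^2/4\tau}$), followed by the substitution $u=\tau_t(r,s)$, which produces convergent integrals like $\int_0^\infty u^{-6}e^{-1/(32u)}\,du$ for \eqref{mom3} and the logarithm $\int_{(v/L_t)^2}^{\tau} v^2 u^{-1}\,du \asymp v^2\log(L_t/v)$ for \eqref{mom4}. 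Second, your attribution of the error terms is reversed: the $Y_t(0)$ term does not come from the layer near $r=s$ but from the range $r\le s-t^{2/3}$ --- it reflects initial particles close to $L_t$ whose density has not yet relaxed to the sine profile, and it is exactly what the transcription of equations (5.33)--(5.36) of \cite{ms22} yields there --- while the layer $s-t^{2/3}<r<s$ (together with the diagonal term) is what produces the $\tfrac{1}{L_t}Z_t(0)$ correction in \eqref{mom3} and the $\tfrac{\log L_t}{L_t}Z_t(0)$ correction in \eqref{mom4}. As written, your plan replaces the hardest part of the proof (the late boundary layer) with a tool that does not apply, and the misattribution would also lead you to look for the $\log L_t$ in the wrong place; the rest of the argument is sound.
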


The proof of Lemma \ref{momlem} will be deferred until section \ref{momproof}.  It will be clear from the proof that the results (\ref{mom3}) and (\ref{mom4}) hold under the weaker hypotheses that $t^{2/3} \lesssim s$, $t - s \gtrsim t$, and $\eps s \lesssim 1$, although we will not need this stronger result in this paper.

\subsection{Proof of Theorem \ref{etachi}}
\label{subsec:etachi}

In this subsection, we complete the proof of Theorem \ref{etachi}.  We begin with the following consequence of Lemma \ref{momlem}.

\begin{lemma}
Let $f: [0, \infty) \rightarrow [0, \infty)$ and $g: [0,1] \rightarrow [0, \infty)$ be bounded nonzero measurable functions.  Let $s = \eps^{5/6}$.  Then, under $\P_{\mathcal{D}^{\rho}}^{\rho}$, we have as $\eps \rightarrow 0$,
\begin{equation}\label{probf}
L_{T({\bf X}_0)}^3 e^{-\sqrt{2} L_{T({\bf X}_0)}(s)} \sum_{u \in {\cal N}^+_{s}} f(X_s(u)) \rightarrow_p \frac{\pi}{2 \sqrt{2}} \int_0^{\infty}  2ye^{-\sqrt{2} y} f(y) \: dy
\end{equation}
and
\begin{equation}\label{probg}
L_{T({\bf X}_0)} e^{-\sqrt{2} L_{T({\bf X}_0)}(s)} \sum_{u \in {\cal N}^+_{s}} e^{\sqrt{2} X_s(u)} g \bigg(\frac{X_s(u)}{L_{T({\bf X}_0)}(s)} \bigg) \rightarrow_p \frac{\sqrt{2}}{\pi} \int_0^1 \frac{\pi}{2} \sin(\pi y) g(y) \: dy.
\end{equation}
Also, we have
\begin{equation}\label{Mprob}
\frac{M({\bf X}_s)}{L_{T({\bf X}_0)}(s)} \rightarrow_p 1
\end{equation}
and
\begin{equation}\label{Nprob}
\frac{\log N({\bf X}_s)}{\sqrt{2} L_{T({\bf X}_0)}(s)} \rightarrow_p 1
\end{equation}
\end{lemma}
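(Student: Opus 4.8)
The plan is to deduce this lemma from Lemma~\ref{momlem} essentially by a second-moment (Chebyshev) argument, with the only genuinely new ingredient being control of the random quantities $Z_t(0)$, $Y_t(0)$, and $L_t$ appearing in the moment estimates, which we must replace by the deterministic-looking scale $L_{T({\bf X}_0)}$. The first step is to set up a high-probability event $E_\eps$ on which everything is under control: using Lemma~\ref{T0upper}, Lemma~\ref{M0bound}, and Lemma~\ref{biglemma}, we may assume that $T({\bf X}_0) \geq k \eps^{-1}$, that $M({\bf X}_0) \leq J \eps^{-1/3}$, and that $L_{T({\bf X}_0)} - M({\bf X}_0) \geq A$ for any prescribed $A$, all with probability at least $1 - \eta$ for $\eps$ small. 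On this event $L_{T({\bf X}_0)} = c\, T({\bf X}_0)^{1/3}$ is of exact order $\eps^{-1/3}$, and we will run the truncated process with the (random, but ${\bf X}_0$-measurable) horizon $t = T({\bf X}_0)$. Since $s = \eps^{5/6}$, on $E_\eps$ we have $t^{2/3} \ll s \ll t$ and $\eps s = \eps^{11/6} \to 0$, so the hypotheses of Lemma~\ref{momlem} are met after conditioning on ${\bf X}_0$; also $L_t(s) = c(t-s)^{1/3} \to L_{T({\bf X}_0)}$ in the sense that $L_{T({\bf X}_0)} - L_{T({\bf X}_0)}(s)$ is negligible compared to the fluctuations we care about, because $s \ll t$.

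Next I would relate the truncated process to the true \BBMabs{} over the short time interval $[0,s]$. The truncated process differs from the untruncated one only by the extra killing at the upper boundary $L_t(\cdot)$, and by Lemma~\ref{meanR} the expected number of particles killed at the boundary by time $s$ is at most $C_6\big(\tfrac{s}{t} Z_t(0) + Y_t(0)\big)$, which on $E_\eps$ is $o(1)$ times the main term in the first-moment estimates \eqref{mom1}--\eqref{mom2} (the ratio involves a factor $s/t \asymp \eps^{11/6}$ or $1/L_t \asymp \eps^{1/3}$). Hence with probability tending to one no particle is killed at the upper boundary before time $s$, so ${\bf X}_s$ and its truncated counterpart agree, and the sums $\sum_{u\in\mathcal N^+_s} f(X_s(u))$, $\sum_{u\in\mathcal N^+_s} e^{\sqrt2 X_s(u)} g(X_s(u)/L_t(s))$, $M({\bf X}_s)$, and $N({\bf X}_s)$ are the same for both processes. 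From here, applying Lemma~\ref{momlem} conditionally on ${\bf X}_0$ gives the conditional mean and variance of each sum; dividing by the conditional mean and invoking Chebyshev's inequality shows that the normalized sum concentrates at the ratio of integrals in \eqref{probf}--\eqref{probg}, provided the variance-to-mean-squared ratio is $o(1)$. That ratio, from \eqref{mom3}--\eqref{mom4} divided by the square of \eqref{mom1}--\eqref{mom2}, is (up to constants) $\tfrac{s}{t} + \tfrac{\log L_t}{L_t} + \tfrac{Y_t(0)}{Z_t(0)}$, and on $E_\eps$ each term is $o(1)$ — the first because $s/t\asymp \eps^{11/6}$, the second because $\log L_t / L_t \asymp \eps^{1/3}\log(1/\eps)$, and the third because $y(x,L) \leq z(x,L)/(\sqrt2 L \sin(\pi x/L))$ forces $Y_t(0)/Z_t(0) \lesssim 1/L_t$. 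This yields \eqref{probf} and \eqref{probg} after noting that $L_t(s)$ may be replaced by $L_{T({\bf X}_0)}$ in the exponential prefactor at cost $e^{\sqrt2(L_{T({\bf X}_0)}-L_{T({\bf X}_0)}(s))} = 1 + o(1)$.

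For \eqref{Mprob} and \eqref{Nprob} I would deduce these from \eqref{probf} and \eqref{probg}. The upper bound $M({\bf X}_s) \leq L_{T({\bf X}_0)}(s)(1+o(1))$ holds deterministically in the truncated process (particles cannot exceed the upper boundary), hence with high probability for the true process by the coupling above; combined with the fact that at a slightly earlier horizon or via monotonicity $M({\bf X}_0) \leq M({\bf X}_s) + O(s^{1/2}\cdot\text{something})$ is not needed — instead, the lower bound $M({\bf X}_s) \geq (1-\gamma) L_{T({\bf X}_0)}(s)$ for every $\gamma > 0$ follows from \eqref{probg} by choosing $g$ supported on $[1-\gamma, 1]$: if $M({\bf X}_s) < (1-\gamma)L_{T({\bf X}_0)}(s)$ then that sum would be exactly zero, contradicting its convergence to the strictly positive constant $\tfrac{\sqrt2}{\pi}\int_{1-\gamma}^1 \tfrac\pi2\sin(\pi y)\,dy$. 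For \eqref{Nprob}, taking $f \equiv 1$ in \eqref{probf} gives $N({\bf X}_s) = (1+o_p(1)) \cdot \tfrac{\pi}{2\sqrt2} L_{T({\bf X}_0)}^{-3} e^{\sqrt2 L_{T({\bf X}_0)}(s)} \int_0^\infty 2ye^{-\sqrt2 y}\,dy$, and taking logarithms, the dominant term is $\sqrt2 L_{T({\bf X}_0)}(s)$ since $\log L_{T({\bf X}_0)}^{-3} = O(\log(1/\eps)) \ll L_{T({\bf X}_0)} \asymp \eps^{-1/3}$ and again $L_{T({\bf X}_0)}(s) = L_{T({\bf X}_0)}(1+o(1))$; dividing by $\sqrt2 L_{T({\bf X}_0)}(s)$ gives the claim. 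The main obstacle I anticipate is the bookkeeping around the random horizon $t = T({\bf X}_0)$: one must be careful that $T({\bf X}_0)$ is ${\bf X}_0$-measurable so that Lemma~\ref{momlem} (which is stated for deterministic $t$, $s$, $\nu$) applies conditionally, and that the $o(1)$ and $\lesssim$ constants in Lemma~\ref{momlem} are uniform over the relevant range of parameters so that they survive the conditioning on the high-probability event $E_\eps$; quantifying "$\eps s \to 0$" and "$t^{2/3} \ll s \ll t$" uniformly on $E_\eps$ is where the preliminary bounds of Section~\ref{subsec:bounds} and Lemma~\ref{biglemma} do the real work.
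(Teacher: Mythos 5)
Your overall architecture (apply Lemma~\ref{momlem} conditionally on ${\bf X}_0$ at a random horizon, conclude by Chebyshev, transfer from the truncated to the true process via Lemma~\ref{meanR}, then deduce \eqref{Mprob} from \eqref{probg} with $g$ supported near $1$ and \eqref{Nprob} from \eqref{probf} with $f\equiv 1$) is the same as the paper's, but there is a genuine gap at the crucial point: you take $t=T({\bf X}_0)$ outright and assert that on your event $E_\eps$ one has $t^{2/3}\ll s\ll t$, with ``$L_{T({\bf X}_0)}$ of exact order $\eps^{-1/3}$.'' The preliminary lemmas only give the lower bound: Lemma~\ref{T0upper} yields $T({\bf X}_0)\gtrsim \eps^{-1}$, while Lemma~\ref{M0bound} bounds $M(D^{\rho})$, \emph{not} $T(D^{\rho})$. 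Nothing proved up to this point excludes configurations with maximum of order $\eps^{-1/3}$ but enormously many particles, for which $Z(D^{\rho},L)$ is huge and hence $T(D^{\rho})\gg\eps^{-5/4}$ (recall from \eqref{meanN} that $\E[N(D^{\rho})]$ can be as large as $e^{C_2\eps^{-1/2}}$, so such events cannot be dismissed a priori). Without an upper bound on $T({\bf X}_0)$, the hypothesis $t^{2/3}\ll s$ of Lemma~\ref{momlem} (and the condition $\delta t^{2/3}\le s$ in Lemma~\ref{meanR}) is simply unverified, and your conditional application of those lemmas collapses. The paper resolves exactly this by working at the capped horizon $t_n=T(D^{\rho_n})\wedge\eps_n^{-9/8}$, for which $t_n^{2/3}\ll s_n\ll t_n$ does hold with high probability, and then bootstrapping: the killed-process version of \eqref{probg} forces $M({\bf X}_s)>\theta L_{t_n}(s)$, whereas Lemma~\ref{M0bound} combined with the quasi-stationarity of Corollary~\ref{survivalcor} caps $M({\bf X}_s)$ at order $\eps^{-1/3}\ll\eps^{-3/8}$; hence $T(D^{\rho})<\eps^{-9/8}$ with probability tending to one and the cap can be removed. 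That upper bound on $T(D^{\rho})$ is produced \emph{inside} this proof and is not a consequence of the lemmas you cite.

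Two further points. First, your closing step ``$L_{T({\bf X}_0)}(s)$ may be replaced by $L_{T({\bf X}_0)}$ in the exponential prefactor at cost $e^{\sqrt{2}(L_{T({\bf X}_0)}-L_{T({\bf X}_0)}(s))}=1+o(1)$'' is false: $L_t-L_t(s)\asymp s\,t^{-2/3}$, which tends to infinity precisely because the method requires $s\gg t^{2/3}$. Fortunately no such replacement is needed, since \eqref{probf}--\eqref{probg} keep $L_{T({\bf X}_0)}(s)$ in the exponent (only the harmless polynomial factor uses $L_{T({\bf X}_0)}$), but as written your final step is wrong. Second, to land on the stated constants you must also identify the conditional mean, which requires $Z({\bf X}_0,L_{T({\bf X}_0)})=1/2$ with high probability (this does follow from Lemma~\ref{biglemma} and the definition of $T$, but concentration ``around the conditional mean'' alone leaves the random factor $Z_t(0)$ in place); likewise $Y_t(0)/Z_t(0)$ is small because $L_{T({\bf X}_0)}-M({\bf X}_0)\to_p\infty$, not because of a universal bound by $1/L_t$. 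Finally, the issue you flag but do not resolve—applying Lemma~\ref{momlem}, stated for deterministic parameters, at a random, ${\bf X}_0$-measurable horizon—is handled in the paper by passing to a Skorokhod representation so that the required parameter asymptotics hold almost surely before conditioning.
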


\begin{proof}
Our strategy is to apply Lemma \ref{momlem}, conditional on the initial configuration at time zero.
To do this, let $(\eps_n)_{n=1}^{\infty}$ be a sequence of positive numbers tending to zero.  Let
$$\rho_n = \sqrt{2} + \eps_n, \qquad s_n = \eps_n^{-5/6}, \qquad t_n = T(D^{\rho_n}) \wedge \eps_n^{-9/8},$$
where $D^{\rho_n}$ is a random variable having law $\mathcal{D}^{\rho_n}$ which will serve as the initial configuration of particles at time zero.
Note that $\eps_n$ and therefore $s_n$ are deterministic but $t_n$ is random.
We have $s_n/t_n \rightarrow_p 0$ by Lemma~\ref{T0upper}.  By Lemma \ref{biglemma}, we have $$L_{T(D^{\rho_n})} - M(D^{\rho_n}) \rightarrow_p \infty \qquad\mbox{as }n \rightarrow \infty.$$  Also, $\P(M(D^{\rho_n}) > \frac{c}{2}\eps_n^{-3/8}) \rightarrow 0$ as $n \rightarrow \infty$ by Lemma \ref{M0bound}.
Combining these two results, we get $$L_{t_n} - M(D^{\rho_n}) \rightarrow_p \infty \qquad\mbox{as }n \rightarrow \infty.$$
It now follows from Skorokhod's Representation Theorem that we may construct the random variables $D^{\rho_n}$ on one probability space in such a way that almost surely, we have
\begin{equation}\label{snbounds}
\lim_{n \rightarrow \infty} \frac{t_n^{2/3}}{s_n} = 0, \qquad \lim_{n \rightarrow \infty} \frac{s_n}{t_n} = 0,
\end{equation}
and
\begin{equation}\label{LMlimit}
\lim_{n \rightarrow \infty} \big( L_{t_n} - M(D^{\rho_n}) \big) = \infty.
\end{equation}
The results \eqref{snbounds} and \eqref{LMlimit}, combined with the fact that $\eps_n s_n \rightarrow 0$ as $n \rightarrow \infty$, allow us to apply Lemma \ref{momlem}, conditional on the initial configurations $D^{\rho_n}$.

The result \eqref{LMlimit} implies that almost surely $L_{t_n} - M(D^{\rho_n}) > 2$ for sufficiently large $n$.  Then the definition of $T(D^{\rho_n})$ implies that almost surely $Z_{T(D^{\rho_n})}(0) = 1/2$ for sufficiently large $n$.  Because $Z_{t_n}(0) > 1/2$ when $t_n < T(D^{\rho_n})$, it follows that almost surely
\begin{equation}\label{ZTlim}
Z_{t_n}(0) \geq 1/2 \qquad\mbox{for sufficiently large }n.
\end{equation}
It also follows from the definitions of $z(x,L)$ and $y(x,L)$ that when equation \eqref{LMlimit} holds, we have $Y_{t_n}(0)/Z_{t_n}(0) \rightarrow 0$ as $n \rightarrow \infty$.  We also have $s_n/t_n \rightarrow 0$ and $L_{t_n} \rightarrow \infty$ as $n \rightarrow \infty$.  Therefore, the right-hand side of \eqref{mom3} divided by the square of the right-hand side of \eqref{mom1} tends to zero as $n \rightarrow \infty$, and the right-hand side of \eqref{mom4} divided by the square of the right-hand side of \eqref{mom2} tends to zero as $n \rightarrow \infty$.  It follows that we can obtain concentration results by applying the conditional Chebyshev's Inequality.  We get that for the process in which particles are killed not only at the origin but also if they reach $L_{t_n}(r)$ at time $r$, we have as $n \rightarrow \infty$,
\begin{equation}\label{fkilling}
\frac{L_{t_n}^3}{Z_{t_n}(0)} e^{-\sqrt{2} L_{t_n}(s_n)} \sum_{u \in \mathcal{N}^+_{s_n}} f(X_{s_n}(u)) \rightarrow_p \frac{\pi}{\sqrt{2}} \int_0^{\infty} 2ye^{-\sqrt{2} y} f(y) \: dy
\end{equation}
and
\begin{equation}\label{gkilling}
\frac{L_{t_n}}{Z_{t_n}(0)} e^{-\sqrt{2} L_{t_n}(s_n)} \sum_{u \in \mathcal{N}^+_{s_n}} e^{\sqrt{2} X_{s_n}(u)} g \bigg( \frac{X_{s_n}(u)}{L_{t_n}(s_n)}\bigg) \rightarrow_p \frac{2 \sqrt{2}}{\pi} \int_0^1 \frac{\pi}{2} \sin(\pi y) g(y) \: dy.
\end{equation}

Let $0 < \theta < 1$.  By applying \eqref{gkilling} to the function $g(x) = \1_{[\theta, 1]}(x)$, we see that
\begin{equation}\label{MTL}
\lim_{n \rightarrow \infty} \P_{\mathcal{D}^{\rho_n}}^{\rho_n, t_n}(M({\bf X}_{s_n}) > \theta L_{t_n}(s_n)) = 1.
\end{equation}
Because the position of the maximum particle can only become larger if particles are not killed at an upper boundary, it follows that
$$\lim_{n \rightarrow \infty} \P_{\mathcal{D}^{\rho_n}}^{\rho_n}(M({\bf X}_{s_n}) > \theta L_{t_n}(s_n)) = 1.$$
However, Lemma \ref{M0bound} and Corollary \ref{survivalcor} imply that $$\lim_{n \rightarrow \infty} \P_{\mathcal{D}^{\rho_n}}^{\rho_n}(M({\bf X}_{s_n}) > \theta L_{\eps_n^{-9/8}}(s_n)) = 0.$$  It now follows from the definition of $t_n$ that $$\lim_{n \rightarrow \infty} \P(T(D^{\rho_n}) < \eps_n^{-9/8}) = 1.$$
Therefore, $t_n$ can be replaced by $T(D^{\rho_n}) = T({\bf X}_0)$ in \eqref{fkilling} and \eqref{gkilling}.  Also, because almost surely $Z_{T(D^{\rho_n})}(0) = 1/2$ for sufficiently large $n$ as noted above, we can replace $Z_{t_n}(0)$ by $1/2$ in \eqref{fkilling} and \eqref{gkilling}.  It follows that \eqref{probf} and \eqref{probg} both hold for the process in which particles are killed at $L_{t_n}(r)$ at time $r$.  Furthermore, by letting $\theta \rightarrow 1$ in \eqref{MTL} and noting that $M({\bf X}_{s_n}) \leq L_{t_n}(s_n)$ when particles are killed at the upper boundary, the result \eqref{Mprob} also holds for the process in which particles are killed at $L_{t_n}(r)$ at time $r$.

To complete the proof of \eqref{probf}, \eqref{probg}, and (\ref{Mprob}), it remains to show that, with probability tending to one as $n \rightarrow \infty$, no particle reaches $L_{t_n}(r)$ for $r \in [0, s_n]$, which will establish that results for the process with killing at the upper boundary also hold for the process without killing.  However, this follows immediately from Lemma \ref{meanR} because $s_n/t_n \rightarrow 0$ and $Y_{t_n}(0)/Z_{t_n}(0) \rightarrow 0$ almost surely as $n \rightarrow \infty$, and $Z_{t_n}(0) = 1/2$ with probability tending to one as $n \rightarrow \infty$.  Finally, \eqref{Nprob} follows by choosing $f(x) = 1$ for all $x$ in \eqref{probf} and taking logarithms.
\end{proof}

\begin{proof}[Proof of Theorem \ref{etachi}]
Let $f: [0, \infty) \rightarrow [0, \infty)$ and $g: [0, 1] \rightarrow [0, \infty)$ be bounded continuous functions.  Let $s = \eps^{-5/6}$.  Dividing the expression on the left-hand side of \eqref{probf} for a general $f$ by the expression when $f(x) = 1$ for all $x$, we get that under $\P^{\rho}_{\mathcal{D}^{\rho}}$, as $\eps \rightarrow 0$,
$$\frac{1}{N({\bf X}_s)} \sum_{u \in {\cal N}^+_{s}} f(X_s(u)) \rightarrow_p \int_0^{\infty} 2y e^{-\sqrt{2} y} f(y) \: dy.$$
Likewise, dividing the expression on the left-hand side of \eqref{probg} for a general $g$ by the expression when $g(x) = 1$ for all $x$, we get that under ${\cal D}^{\rho_n}$,
$$\bigg( \sum_{u \in {\cal N}^+_{s}} e^{\sqrt{2} X_s(u)} \bigg)^{-1} \sum_{u \in {\cal N}^+_{s}} e^{\sqrt{2} X_s(u)} g \bigg(\frac{X_s(u)}{L_{T({\bf X}_0)}(s)} \bigg) \rightarrow_p \int_0^1 \frac{\pi}{2} \sin(\pi y) g(y) \: dy.$$  By \eqref{Mprob} and the continuity of $g$, it now follows that
$$\bigg( \sum_{u \in {\cal N}^+_{s}} e^{\sqrt{2} X_s(u)} \bigg)^{-1} \sum_{u \in {\cal N}^+_{s}} e^{\sqrt{2} X_s(u)} g \bigg(\frac{X_s(u)}{M({\bf X}_s)} \bigg) \rightarrow_p \int_0^1 \frac{\pi}{2} \sin(\pi y) g(y) \: dy.$$
Because $(\rho^2/2 - 1)s \rightarrow 0$ as $\eps \rightarrow 0$, it now follows from the quasi-stationary established in Corollary \ref{survivalcor} that
\begin{equation}\label{ffinal}
\frac{1}{N({\bf X}_0)} \sum_{u \in {\cal N}^+_{0}} f(X_0(u)) \rightarrow_p \int_0^{\infty} 2y e^{-\sqrt{2} y} f(y) \: dy.
\end{equation}
and
$$\bigg( \sum_{u \in {\cal N}^+_{0}} e^{\sqrt{2} X_0(u)} \bigg)^{-1} \sum_{u \in {\cal N}^+_{0}} e^{\sqrt{2} X_0(u)} g \bigg(\frac{X_0(u)}{M({\bf X}_0)} \bigg) \rightarrow_p \int_0^1 \frac{\pi}{2} \sin(\pi y) g(y) \: dy.$$
According to Theorem 16.16 of \cite{kal02}, these two convergence results imply Theorem \ref{etachi}.
\end{proof}

We now record the following consequence of Theorem \ref{etachi} which will be important in the next section.  For $\nu \in \mathfrak{P}$ given by \eqref{nudef} and $\Delta > 0$, define $$Z^{\Delta}(\nu, L) = \sum_i z(x_i, L)\1_{\{x_i \leq \Delta\}},$$ which is the contribution to $Z(\nu, L)$ from particles located between $0$ and $\Delta$.  The following lemma shows that this contribution is small for a configuration of particles having law $\mathcal{D}^{\rho}$.

\begin{lemma}\label{leftedge}
For all $\Delta > 0$ and all $\theta > 0$, we have
\begin{equation}\label{ZDrho}
\lim_{\eps \rightarrow 0} \P(Z^{\Delta}(D^{\rho}, L_{T(D^{\rho})}) > \theta) = 0.
\end{equation}
\end{lemma}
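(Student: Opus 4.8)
\emph{Proof proposal.} The plan is to bound $Z^{\Delta}(D^{\rho}, L_{T(D^{\rho})})$ by a constant depending only on $\Delta$ times $e^{-\sqrt{2}L_{T(D^{\rho})}}N(D^{\rho})$, and then to prove that this product tends to $0$ in probability. Since $\P(L_{T(D^{\rho})}\le \Delta+2)\to 0$ by Lemma~\ref{T0upper}, and on its complement, writing $D^{\rho}=\sum_i\delta_{x_i}$, the bound $\sin(\pi x/L)\le \pi x/L$ gives $z(x_i,L_{T(D^{\rho})})\le \sqrt{2}\pi x_i e^{\sqrt{2}x_i}e^{-\sqrt{2}L_{T(D^{\rho})}}\le \sqrt{2}\pi\Delta e^{\sqrt{2}\Delta}e^{-\sqrt{2}L_{T(D^{\rho})}}$ for each $x_i\le\Delta$, it suffices to show $e^{-\sqrt{2}L_{T(D^{\rho})}}N(D^{\rho})\to_p 0$. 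Now set $s=\eps^{-5/6}$, so that $(\rho^2/2-1)s\to 0$; since $e^{-\sqrt{2}L_{T(\nu)}}N(\nu)=0$ for the null configuration, Corollary~\ref{survivalcor} gives, exactly as in the proof of Lemma~\ref{biglemma}, $\P\!\left(e^{-\sqrt{2}L_{T(D^{\rho})}}N(D^{\rho})>\delta\right)\le \P^{\rho}_{\mathcal{D}^{\rho}}\!\left(e^{-\sqrt{2}L_{T({\bf X}_s)}}N({\bf X}_s)>\delta\right)+(1-e^{-(\rho^2/2-1)s})$, so it is enough to prove $e^{-\sqrt{2}L_{T({\bf X}_s)}}N({\bf X}_s)\to_p 0$ under $\P^{\rho}_{\mathcal{D}^{\rho}}$. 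Applying \eqref{probf} with $f\equiv 1$ (and $\int_0^\infty 2y e^{-\sqrt{2}y}\,dy=1$) yields $N({\bf X}_s)=(1+o_p(1))\frac{\pi}{2\sqrt{2}}L_{T({\bf X}_0)}^{-3}e^{\sqrt{2}L_{T({\bf X}_0)}(s)}$, hence
\[
e^{-\sqrt{2}L_{T({\bf X}_s)}}N({\bf X}_s)=(1+o_p(1))\,\frac{\pi}{2\sqrt{2}}\,L_{T({\bf X}_0)}^{-3}\,e^{\sqrt{2}\left(L_{T({\bf X}_0)}(s)-L_{T({\bf X}_s)}\right)},
\]
and since $L_{T({\bf X}_0)}\to_p\infty$ (Lemma~\ref{T0upper}), it remains only to show that $L_{T({\bf X}_0)}(s)-L_{T({\bf X}_s)}\le C$ with probability tending to one, for some constant $C$.

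This comparison is the crux of the argument. First, applying \eqref{probg} with $g(y)=\sin(\pi y)$ and using $L_{T({\bf X}_0)}(s)/L_{T({\bf X}_0)}\to_p 1$ (again Lemma~\ref{T0upper}, since $s/T({\bf X}_0)\to_p 0$), a short computation — the relevant integral being $\frac{\sqrt2}{\pi}\int_0^1\frac{\pi}{2}\sin^2(\pi y)\,dy=\frac{\sqrt2}{4}$ — gives $Z({\bf X}_s,L_{T({\bf X}_0)}(s))\to_p \tfrac12$, so $Z({\bf X}_s,L_{T({\bf X}_0)}(s))\ge \tfrac14$ with probability tending to one. On the other hand, by the definition \eqref{Tdef} of $T$ we have $Z({\bf X}_s,L_{T({\bf X}_s)})\le \tfrac12$ and $L_{T({\bf X}_s)}\ge M({\bf X}_s)+2$. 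Assume $L_{T({\bf X}_s)}<L_{T({\bf X}_0)}(s)$ (otherwise the desired inequality is trivial). Then for every $L\in[L_{T({\bf X}_s)},L_{T({\bf X}_0)}(s)]$ each particle of ${\bf X}_s$ lies at position at most $M({\bf X}_s)\le L_{T({\bf X}_s)}-2\le L-2$, so \eqref{dLx} applies termwise and gives $\frac{d}{dL}\log Z({\bf X}_s,L)\le -\tfrac18$ on this interval; integrating from $L_{T({\bf X}_s)}$ to $L_{T({\bf X}_0)}(s)$ and using $Z({\bf X}_s,L_{T({\bf X}_s)})\le\tfrac12$ and $Z({\bf X}_s,L_{T({\bf X}_0)}(s))\ge\tfrac14$ yields
\[
L_{T({\bf X}_0)}(s)-L_{T({\bf X}_s)}\le 8\log\frac{Z({\bf X}_s,L_{T({\bf X}_s)})}{Z({\bf X}_s,L_{T({\bf X}_0)}(s))}\le 8\log 2
\]
with probability tending to one.

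Taking $C=8\log 2$ and combining the two displays gives $e^{-\sqrt{2}L_{T({\bf X}_s)}}N({\bf X}_s)\le (1+o_p(1))\frac{\pi}{2\sqrt{2}}2^{8\sqrt 2}L_{T({\bf X}_0)}^{-3}\to_p 0$, which together with the reduction of the first paragraph proves the lemma. I expect the comparison in the second paragraph to be the main obstacle; in particular the elementary bound $L_{T({\bf X}_s)}\ge M({\bf X}_s)+2$ is by itself far too weak here, since $N({\bf X}_s)$ is of order $e^{\sqrt{2}M({\bf X}_s)}$ rather than smaller, so one genuinely has to exploit that the functional $Z({\bf X}_s,\cdot)$ pins $L_{T({\bf X}_s)}$ to within $O(1)$ of $L_{T({\bf X}_0)}(s)$. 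A minor technical point is that, in order to apply \eqref{probg} with $g=\sin(\pi\cdot)$ to the process without an upper boundary, one uses that with probability tending to one no particle reaches $L_{T({\bf X}_0)}(r)$ during $[0,s]$; this is established exactly as in the proof of Theorem~\ref{etachi} via Lemma~\ref{meanR}.
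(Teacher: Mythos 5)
Your proof is correct, but it takes a genuinely different route from the paper's. The paper's own argument is much shorter: using that $z(x,L)\sim \pi\sqrt{2}\,xe^{\sqrt{2}x}e^{-\sqrt{2}L}$ uniformly for $x$ in compacts as $L\to\infty$, it applies the empirical-measure convergence \eqref{ffinal} (from the proof of Theorem~\ref{etachi}) to the bounded functions $f_k(x)=xe^{\sqrt{2}x}\1_{\{x\le k\Delta\}}$ with $k=1$ and $k=m$, obtaining $Z^{\Delta}(D^{\rho},L_{T(D^{\rho})})/Z^{m\Delta}(D^{\rho},L_{T(D^{\rho})})\rightarrow_p m^{-3}$; combined with the trivial bound $Z^{m\Delta}\le Z(D^{\rho},L_{T(D^{\rho})})\le 1/2$ and letting $m\to\infty$, this gives the lemma with no control of $N(D^{\rho})$ against $L_{T(D^{\rho})}$ at all. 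You instead bound $Z^{\Delta}$ by a constant (depending on $\Delta$) times $e^{-\sqrt{2}L_{T(D^{\rho})}}N(D^{\rho})$ and prove the stronger fact that this quantity is of order $L_{T(D^{\rho})}^{-3}$ in probability, which forces you to bring in the quasi-stationary transfer to time $s=\eps^{-5/6}$, \eqref{probf} with $f\equiv 1$, and the rigidity estimate $L_{T({\bf X}_0)}(s)-L_{T({\bf X}_s)}\le 8\log 2$ with probability tending to one, derived from \eqref{probg} with $g=\sin(\pi\,\cdot)$ together with \eqref{dLx} and the two defining inequalities of $T({\bf X}_s)$; your observation that the crude bound $L_{T({\bf X}_s)}\ge M({\bf X}_s)+2$ alone would not suffice is accurate, and the technical point you flag (whp no particle lies above $L_{T({\bf X}_0)}(s)$ at time $s$, so that $Z({\bf X}_s,L_{T({\bf X}_0)}(s))$ matches the sum in \eqref{probg}) is indeed covered by the Lemma~\ref{meanR} step in the proof of the lemma containing \eqref{probf}--\eqref{Nprob}. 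All your inputs (that lemma, Lemmas~\ref{T0upper} and~\ref{derzlemma}, Corollary~\ref{survivalcor}) precede Lemma~\ref{leftedge}, so there is no circularity, even though your comparison of $L_{T({\bf X}_0)}(s)$ and $L_{T({\bf X}_s)}$ anticipates the spirit of Lemma~\ref{TTDelta} and the proof of Theorem~\ref{NandM}. What your approach buys is a quantitative rate and the by-product $e^{-\sqrt{2}L_{T(D^{\rho})}}N(D^{\rho})\rightarrow_p 0$; what the paper's buys is brevity.
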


\begin{proof}
Note that $L_{T(D^{\rho})} \rightarrow_p \infty$ as $\eps \rightarrow 0$ by Lemma \ref{T0upper}.  Also, using that $\lim_{y \rightarrow 0} y^{-1} \sin(y) = 1$, for any fixed $x > 0$ we have $$\lim_{L \rightarrow \infty} \frac{z(x,L)}{\pi \sqrt{2} e^{-\sqrt{2} L} \cdot xe^{\sqrt{2} x}} = 1,$$
and this convergence is uniform over $x$ in compact intervals.  Therefore, if for each positive integer $k$ we define the function $f_k(x) = xe^{\sqrt{2} x} \1_{\{x \leq k \Delta\}}$, then
under $\P^{\rho}_{\mathcal{D}^{\rho}}$, we have as $\eps \rightarrow 0$,
$$\frac{ \pi \sqrt{2} e^{-\sqrt{2} L_{T({\bf X}_0)}}}{Z^{k \Delta}({\bf X}_0, L_{T({\bf X}_0)})} \sum_{u \in \mathcal{N}_0} f_k(X_0(u))  \rightarrow_p 1.$$
Now, by applying \eqref{ffinal} to the function $f_k$ for $k = m$ and $k = 1$, we get that as $\eps \rightarrow 0$,
$$\frac{Z^{\Delta}(D^{\rho}, L_{T(D^{\rho})})}{Z^{m \Delta}(D^{\rho}, L_{T(D^{\rho})})} \rightarrow_p \frac{\int_0^{\infty} 2y e^{-\sqrt{2} y} f_1(y) \: dy}{\int_0^{\infty} 2y e^{-\sqrt{2} y} f_m(y) \: dy} = \frac{\int_0^{\Delta} 2y^2 \: dy}{\int_0^{m \Delta} 2y^2 \: dy} = \frac{1}{m^3},$$
which tends to zero as $m \rightarrow \infty$.
Because $Z(D^{\rho}, L_{T(D^{\rho})}) \leq 1/2$ by the definition of $T(D^{\rho})$, and therefore $Z^{m \Delta}(D^{\rho}, L_{T(D^{\rho})}) \leq 1/2$ for all $m$, the result follows.
\end{proof}

\subsection{Proof of Theorem \ref{NandM}}
\label{subsec:NandM}

So that we can take advantage of the coupling presented in section \ref{couplesec}, we will begin by proving that the values of $T(D^{\rho})$ and $T(D^{\rho} - \Delta)$ are approximately the same.

\begin{lemma}\label{TTDelta}
Let $\Delta > 0$.  Then there exists a positive constant $k_{\Delta}$, depending on $\Delta$, such that $$\lim_{\eps \rightarrow 0} \P\big(T(D^{\rho})- k_{\Delta} T(D^{\rho})^{2/3} \leq T(D^{\rho} - \Delta) \leq T(D^{\rho}) \big) = 1.$$
\end{lemma}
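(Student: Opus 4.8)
The inequality $T(D^{\rho}-\Delta)\le T(D^{\rho})$ is immediate: by Definition~\ref{numinusb} and Definition~\ref{dominates} we have $D^{\rho}-\Delta\prec D^{\rho}$, so Lemma~\ref{Tmonotone} gives $T(D^{\rho}-\Delta)\le T(D^{\rho})$. The content of the lemma is the matching lower bound $T(D^{\rho}-\Delta)\ge T(D^{\rho})-k_{\Delta}T(D^{\rho})^{2/3}$, which I would establish on a high-probability event. Choose constants at the end, with $k_{\Delta}$ chosen first and large depending only on $\Delta$, then $A$ depending on $k_{\Delta}$, then $\theta$ small; let $E$ be the intersection of $\{M(D^{\rho})<L_{T(D^{\rho})}-A\}$ (probability $\to1$ by Lemma~\ref{biglemma}), $\{Z^{\Delta+1}(D^{\rho},L_{T(D^{\rho})})\le\theta\}$ (Lemma~\ref{leftedge} applied with $\Delta+1$), and $\{T(D^{\rho})\ge T_{0}\}$ for an arbitrarily large fixed $T_{0}$ (Lemma~\ref{T0upper}); then $\P(E)\to1$ as $\eps\to0$. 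Write $T=T(D^{\rho})$ and $t'=T-k_{\Delta}T^{2/3}$.

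On $E$ we have $M(D^{\rho})<L_{T}-2$, so by the observation following \eqref{Tdef} we get $Z(D^{\rho},L_{T})=1/2$. The key step is to transfer this to $D^{\rho}-\Delta$. Since $D^{\rho}-\Delta$ consists of the particles $x_{i}-\Delta$ with $x_{i}>\Delta$, I restrict the defining sum of $Z(D^{\rho}-\Delta,L_{T})$ to indices with $x_{i}\ge\Delta+1$ (on $E$ every particle satisfies $x_{i}\le M(D^{\rho})\le L_{T}-2$ automatically) and apply the upper bound in \eqref{dzx} along the segment $[x_{i}-\Delta,x_{i}]\subset[1,L_{T}-2]$, giving $z(x_{i}-\Delta,L_{T})\ge e^{-(\sqrt2+\pi/2)\Delta}z(x_{i},L_{T})$ for each such $i$. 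Summing, using Lemma~\ref{leftedge} to discard the particles below $\Delta+1$ and the event $\{M(D^{\rho})<L_{T}-A\}$ to note there are none above $L_{T}-2$, I obtain
\[
Z(D^{\rho}-\Delta,L_{T})\ \ge\ e^{-(\sqrt2+\pi/2)\Delta}\bigl(Z(D^{\rho},L_{T})-\theta\bigr)\ =\ e^{-(\sqrt2+\pi/2)\Delta}\Bigl(\tfrac12-\theta\Bigr).
\]

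Next I move the upper boundary back from $L_{T}$ to $L_{t'}$. Since $L_{t}=ct^{1/3}$ and $T\ge T_{0}$, the gap $\delta:=L_{T}-L_{t'}$ equals $\tfrac{ck_{\Delta}}{3}+o(1)$ as $T_{0}\to\infty$, so on $E$ (for $T_{0}$ large) $\delta$ lies in a bounded neighbourhood of $\tfrac{ck_{\Delta}}{3}$; in particular I may arrange $\delta\le A-2$, which guarantees $L_{t'}\ge M(D^{\rho}-\Delta)+2$, and for every $L\in[L_{t'},L_{T}]$ all particles of $D^{\rho}-\Delta$ lie below $L-2$. Then \eqref{dLx} applied termwise and integrated in $L$ yields $Z(D^{\rho}-\Delta,L_{t'})\ge e^{\delta/8}Z(D^{\rho}-\Delta,L_{T})$, so
\[
Z(D^{\rho}-\Delta,L_{t'})\ \ge\ e^{\delta/8}\,e^{-(\sqrt2+\pi/2)\Delta}\Bigl(\tfrac12-\theta\Bigr).
\]
Choosing $k_{\Delta}$ large enough that $e^{\delta/8-(\sqrt2+\pi/2)\Delta}\ge4$, then $A=\tfrac{ck_{\Delta}}{3}+3$ and $\theta=1/8$, makes the right-hand side at least $3/2>1/2$. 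Since $L_{t'}\ge M(D^{\rho}-\Delta)+2$ while $Z(D^{\rho}-\Delta,L_{t'})>1/2$, and since for any $\nu$ the set $\{t:L_{t}\ge M(\nu)+2,\ Z(\nu,L_{t})\le1/2\}$ equals $[T(\nu),\infty)$ (because $t\mapsto L_{t}$ is increasing and, once $L_{t}\ge M(\nu)+2$, $t\mapsto Z(\nu,L_{t})$ is strictly decreasing by \eqref{dLx}), it follows that $t'<T(D^{\rho}-\Delta)$, i.e. $T(D^{\rho}-\Delta)>T-k_{\Delta}T^{2/3}$ on $E$. Combined with the easy inequality, letting $\eps\to0$ proves the lemma.

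The main obstacle is the boundary behaviour of $z(\cdot,L)$: the derivative bounds of Lemma~\ref{derzlemma} degrade near both $x=0$ and $x=L$, so the comparison of $Z(D^{\rho}-\Delta,\cdot)$ with $Z(D^{\rho},\cdot)$ cannot be performed particle by particle without first knowing that under $\mathcal{D}^{\rho}$ there is negligible $z$-mass near $0$ (precisely Lemma~\ref{leftedge}) and no mass within a constant of $L_{T(D^{\rho})}$ (precisely Lemma~\ref{biglemma}); once these two inputs are available the rest is the elementary constant-chasing sketched above.
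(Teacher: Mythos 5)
Your proposal is correct and follows essentially the same route as the paper's proof: upper bound from domination and Lemma \ref{Tmonotone}, lower bound by combining $Z(D^{\rho},L_{T(D^{\rho})})=1/2$ (via Lemma \ref{biglemma}), the particle-by-particle shift estimate from the upper bound in \eqref{dzx} with Lemma \ref{leftedge} controlling the mass below $\Delta+1$, and then the boundary-moving estimate \eqref{dLx}. The only difference is presentational: the paper bounds $L_{T(D^{\rho})}-L_{T(D^{\rho}-\Delta)}$ by a constant $a_{\Delta}$ and converts to a $k_{\Delta}T^{2/3}$ bound on times, whereas you evaluate $Z$ directly at $t'=T-k_{\Delta}T^{2/3}$; the two computations are equivalent.
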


\begin{proof}
Because $Z(D^{\rho}, L_{T(D^{\rho})}) = 1/2$ as long as $M(D^{\rho}) < L_{T(D^{\rho})} - 2$, it follows from Lemma~\ref{biglemma} that
$$\lim_{\eps \rightarrow 0} \P(Z(D^{\rho}, L_{T(D^{\rho})}) = 1/2) = 1.$$
We now consider what happens when we change the configuration of particles from $D^{\rho}$ to $D^{\rho} - \Delta$.  It follows from the upper bound in (\ref{dzx}) that for $x \in [\Delta + 1, L_{T(D^{\rho})} - 2]$, we have
$$z(x - \Delta, L_{T(D^{\rho})}) \geq e^{-\Delta(\sqrt{2} + \pi/2)} z(x, L_{T(D^{\rho})}).$$
Lemma \ref{leftedge} with $\theta = 1/4$ implies that the contribution to $Z(D^{\rho}, L_{T(D^{\rho})})$ from all particles below $\Delta + 1$ is at most $1/4$ with probability tending to one as $\eps \rightarrow 0$.
Therefore, $$\lim_{\eps \rightarrow 0} \P\bigg(Z(D^{\rho} - \Delta, L_{T(D^{\rho})}) \geq \frac{1}{4} e^{-\Delta(\sqrt{2} + \pi/2)} \bigg) = 1.$$

By (\ref{dLx}), if $0 < x \leq M(D^{\rho})$ and $M(D^{\rho}) + 2 \leq L \leq L_{T(D^{\rho})}$, then
$$z(x, L) \geq z(x, L_{T(D^{\rho})}) e^{(L_{T(D^{\rho})} - L)/8}.$$
Therefore, on the event that $Z(D^{\rho} - \Delta, L_{T(D^{\rho})}) \geq \frac{1}{4} e^{-\Delta(\sqrt{2} + \pi/2)}$, we have $Z(D^{\rho} - \Delta, L) \geq 1/2$ if
$L \leq L_{T(D^{\rho})}- a_{\Delta}$, where $$a_{\Delta} = 8 \bigg( \Delta \Big(\sqrt{2} + \frac{\pi}{2} \Big) + \log 2 \bigg).$$
Thus, with probability tending to one as $\eps \rightarrow 0$, we have
\begin{equation}\label{LTaDel}
L_{T(D^{\rho})} - a_{\Delta} \leq L_{T(D^{\rho} - \Delta)} \leq L_{T(D^{\rho})},
\end{equation}
and therefore
$$\bigg(T(D^{\rho})^{1/3} - \frac{a_{\Delta}}{c} \bigg)^3 \leq T(D^{\rho} - \Delta) \leq T(D^{\rho}).$$
The result follows, with $k_{\Delta} = 3a_{\Delta}/c$.
\end{proof}

The next result, which is an extension of Lemma \ref{lemma2.11}, allows us to relate the extinction time $\zeta$ for the process to $T({\bf X}_0)$, which is a function of the initial configuration of particles.  This result will allow us to use what is known from Corollary \ref{survivalcor} about the distribution of $\zeta$ to obtain asymptotics for $M(D^{\rho})$ and $N(D^{\rho})$.

\begin{lemma}\label{zetaTlemma}
Let $\delta > 0$.  There exists a positive constant $k_0$ such that for sufficiently small $\eps$, we have
\begin{equation}
\P_{\mathcal{D}^{\rho}}^{\rho} \big( |\zeta - T({\bf X}_0)| \leq k_0 T({\bf X}_0)^{2/3} \big) > 1 - \delta.
\end{equation}
\end{lemma}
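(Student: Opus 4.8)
The plan is to prove this subcritical analogue of Lemma~\ref{lemma2.11} by using the coupling of Section~\ref{couplesec} to sandwich $\zeta$ between the extinction times of two copies of BBM${}^+(-\sqrt 2)$, and then applying Lemma~\ref{lemma2.11} itself to each of those copies, conditionally on the initial configuration $\mathbf{X}_0 = D^\rho$. Recall that under $\P_{\mathcal{D}^\rho}^\rho$ the coupling produces, on one probability space, the process $\mathbf{X}$ together with $\mathbf{X}^U$ (a BBM${}^+(-\sqrt 2)$ with the same initial configuration $D^\rho$) and $\mathbf{X}^{L,\Delta}$ (a BBM${}^+(-\sqrt 2)$ started from $D^\rho - \Delta$), and that \eqref{Uextinct} gives $\min(\eps^{-1}\Delta, \zeta^{L,\Delta}) \le \zeta \le \zeta^U$. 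Throughout, $T$, $L_{(\cdot)}$, $M(\cdot)$, $Z(\cdot,\cdot)$ are deterministic functions of the point measure, so they are unaffected by the change of drift.

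For the upper bound $\zeta \le T(\mathbf{X}_0) + k_0 T(\mathbf{X}_0)^{2/3}$: conditionally on $\mathbf{X}_0 = \nu$, the process $\mathbf{X}^U$ has law $\P_\nu^{\sqrt 2}$, so Lemma~\ref{lemma2.11} applies as soon as $T(\nu) \ge t_0$ and $L_{T(\nu)} - M(\nu) \ge a$. By Lemma~\ref{T0upper} we have $T(D^\rho) \ge k\eps^{-1} \to \infty$ with probability close to $1$, so the first condition holds with high probability; by Lemma~\ref{biglemma}, $L_{T(D^\rho)} - M(D^\rho) \to_p \infty$, so the second holds with high probability. Integrating the conditional estimate from Lemma~\ref{lemma2.11} over the law of $\mathbf{X}_0$ and using $\zeta \le \zeta^U$ then makes $\P_{\mathcal{D}^\rho}^\rho\big(\zeta > T(\mathbf{X}_0) + k T(\mathbf{X}_0)^{2/3}\big)$ as small as desired for small $\eps$.

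For the lower bound $\zeta \ge T(\mathbf{X}_0) - k_0 T(\mathbf{X}_0)^{2/3}$, I would first fix $\Delta = \Delta(\delta)$ large enough that $e^{-\sqrt 2 \Delta}$ is small compared to $\delta$. On the event $\{\zeta^{L,\Delta} > \eps^{-1}\Delta\}$, \eqref{Uextinct} forces $\zeta \ge \eps^{-1}\Delta$, so this event is contained in $\{\zeta \ge \eps^{-1}\Delta\}$, whose probability equals $e^{-(\rho^2/2 - 1)\eps^{-1}\Delta} \le e^{-\sqrt 2 \Delta}$ by Corollary~\ref{survivalcor}; hence with high probability $\min(\eps^{-1}\Delta, \zeta^{L,\Delta}) = \zeta^{L,\Delta}$ and therefore $\zeta \ge \zeta^{L,\Delta}$. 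Conditionally on $\mathbf{X}_0 = \nu$, the process $\mathbf{X}^{L,\Delta}$ has law $\P_{\nu - \Delta}^{\sqrt 2}$, and I would apply Lemma~\ref{lemma2.11} to it: its hypotheses follow with high probability from Lemma~\ref{TTDelta} combined with Lemma~\ref{T0upper} (so that $T(D^\rho - \Delta) \ge T(D^\rho) - k_\Delta T(D^\rho)^{2/3} \to \infty$), from \eqref{LTaDel}, and from Lemma~\ref{biglemma} (so that $L_{T(D^\rho - \Delta)} - M(D^\rho - \Delta) \ge L_{T(D^\rho)} - M(D^\rho) - a_\Delta \to_p \infty$, using $M(D^\rho - \Delta) \le M(D^\rho)$). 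Lemma~\ref{lemma2.11} then gives $\zeta^{L,\Delta} \ge T(D^\rho - \Delta) - k T(D^\rho - \Delta)^{2/3}$ with high probability, and since $T(D^\rho - \Delta) \le T(D^\rho)$ and $T(D^\rho - \Delta) \ge T(D^\rho) - k_\Delta T(D^\rho)^{2/3}$ by Lemma~\ref{TTDelta}, the right-hand side is at least $T(\mathbf{X}_0) - (k_\Delta + k) T(\mathbf{X}_0)^{2/3}$. Setting $k_0 = k_\Delta + k$ and collecting all the exceptional events via a union bound (each controlled to be below a fixed fraction of $\delta$ for $\eps$ small and $\Delta$ large) finishes the proof.

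The main obstacle I anticipate is the bookkeeping required to deploy Lemma~\ref{lemma2.11}, which is stated for deterministic initial configurations: one must condition on $\mathbf{X}_0 = D^\rho$ (respectively on $D^\rho - \Delta$), and then verify — uniformly enough as $\eps \to 0$ — that the conditions $T \ge t_0$ and $L_T - M \ge a$ hold with probability close to $1$. This is precisely the role of Lemmas~\ref{T0upper}, \ref{biglemma}, \ref{TTDelta} and equation \eqref{LTaDel}, and care is needed so that all the error terms produced by Lemma~\ref{TTDelta} and by Lemma~\ref{lemma2.11} aggregate into a single deviation of order $T(\mathbf{X}_0)^{2/3}$. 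By contrast, the apparently worrying truncation of the coupling at time $\eps^{-1}\Delta$ — worrying because $T(\mathbf{X}_0)$ is itself of order $\eps^{-1}$ — is dispatched cleanly: since $\zeta$ is exactly exponential with rate $\rho^2/2 - 1$ under $\mathcal{D}^\rho$ (Corollary~\ref{survivalcor}), taking $\Delta$ large makes $\P(\zeta \ge \eps^{-1}\Delta)$, and hence $\P(\zeta^{L,\Delta} > \eps^{-1}\Delta)$, negligible, so the coupling is valid past $\zeta^{L,\Delta}$ with high probability.
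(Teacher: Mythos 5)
Your proposal is correct and follows essentially the same route as the paper's proof: couple the \BBMabs{} between the two critical-drift processes $\mathbf{X}^U$ and $\mathbf{X}^{L,\Delta}$, apply Lemma~\ref{lemma2.11} conditionally on the (random) initial configurations $D^\rho$ and $D^\rho-\Delta$ with hypotheses verified via Lemmas~\ref{T0upper}, \ref{biglemma}, \ref{TTDelta} and \eqref{LTaDel}, and neutralize the truncation at time $\eps^{-1}\Delta$ by the exponential law of $\zeta$ from Corollary~\ref{survivalcor}. Your rearrangement of the lower bound (first establishing $\zeta \ge \zeta^{L,\Delta}$ with high probability) is just a cosmetic variant of the paper's union bound, and the final constant $k_0 = k + k_\Delta$ matches.
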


\begin{proof}
We will use the coupling described in section \ref{couplesec}.
Let $\delta > 0$, and choose $k$, $t_0$, and $a$ as in Lemma \ref{lemma2.11} but with $\delta/4$ in place of $\delta$.  By Corollary \ref{survivalcor}, we can choose a constant $\Delta > 0$ large enough that $$\P_{\mathcal{D}^{\rho}}^{\rho}(\zeta > \Delta \eps^{-1}) < \delta/4$$ for sufficiently small $\eps$.  Recall the constants $a_{\Delta}$ and $k_{\Delta}$ from Lemma \ref{TTDelta}.  Choose a positive constant $t_1 > t_0$ large enough that $t_1 - k_{\Delta}t_1^{2/3} \geq t_0$.
By Lemma \ref{T0upper},
\begin{equation}\label{Drho1}
\lim_{\eps \rightarrow 0} \P(T(D^{\rho}) > t_1) = 1.
\end{equation}
By Lemma \ref{TTDelta}, it follows that
\begin{equation}\label{Drho1Del}
\lim_{\eps \rightarrow 0} \P(T(D^{\rho} - \Delta) > t_0) = 1.
\end{equation}
It follows from Lemma \ref{biglemma} that
\begin{equation}\label{Drho2}
\lim_{\eps \rightarrow 0} \P(L_{T(D^{\rho})} - M(D^{\rho}) \geq a + a_{\Delta}) = 1,
\end{equation}
and then, because $M(D^{\rho} - \Delta) \leq M(D^{\rho})$, equation \eqref{LTaDel} yields
\begin{equation}\label{Drho2Del}
\lim_{\eps \rightarrow 0} \P(L_{T(D^{\rho} - \Delta)} - M(D^{\rho} - \Delta) \geq a) = 1.
\end{equation}
In view of (\ref{Drho1}), (\ref{Drho1Del}), (\ref{Drho2}), and (\ref{Drho2Del}), we can apply Lemma \ref{lemma2.11} with $D^{\rho}$ and with $D^{\rho} - \Delta$ in place of $\nu$ to get that for sufficiently small $\eps$,
\begin{equation}\label{Tbound1}
\P^{\rho}_{\mathcal{D}^{\rho}}\big(|\zeta^U - T({\bf X}_0^U)| \leq k T({\bf X}_0^U)^{2/3} \big) > 1 - \delta/4
\end{equation}
and
\begin{equation}\label{Tbound2}
\P^{\rho}_{\mathcal{D}^{\rho}}\big(|\zeta^{L, \Delta} - T({\bf X}_0^{L, \Delta})| \leq k T({\bf X}_0^{L, \Delta})^{2/3} \big) > 1 - \delta/4.
\end{equation}
Because $T({\bf X}_0^U) = T({\bf X}_0)$ and $\zeta \leq \zeta^U$ by \eqref{Uextinct}, equation \eqref{Tbound1} implies
\begin{equation}\label{zetabound1}
\P_{\mathcal{D}^{\rho}}^{\rho}(\zeta \leq T({\bf X}_0) + kT({\bf X}_0)^{2/3}) > 1 - \delta/4.
\end{equation}
Also, using \eqref{Uextinct} and (\ref{Tbound2}),
\begin{equation}\label{zetabound2}
\P_{\mathcal{D}^{\rho}}^{\rho}\big(\zeta \leq T({\bf X}_0^{L, \Delta}) - k T({\bf X}_0^{L, \Delta})^{2/3}\big) \leq \P_{\mathcal{D}^{\rho}}^{\rho}(\zeta > \Delta \eps^{-1}) + \delta/4 < \delta/2.
\end{equation}
Combining (\ref{zetabound1}) and (\ref{zetabound2}) with Lemma \ref{TTDelta}, we get that for sufficiently small $\eps$,
$$\P_{\mathcal{D}^{\rho}}^{\rho}\big( T({\bf X}_0) - (k + k_{\Delta}) T({\bf X}_0)^{2/3} \leq \zeta \leq T({\bf X}_0) + kT({\bf X}_0)^{2/3}\big) > 1 - \delta.$$
The result now follows with $k_0 = k + k_{\Delta}$.
\end{proof}

\begin{proof}[Proof of Theorem \ref{NandM}]
Write $s = \eps^{-5/6}$.  We now compare $T({\bf X}_0)$ and $T({\bf X}_s)$.  Let $\delta > 0$.  By Lemma \ref{zetaTlemma}, for sufficiently small $\eps$,
\begin{equation}\label{zetaT0}
\P_{\mathcal{D}^{\rho}}^{\rho}\big(|\zeta - T({\bf X}_0)| \leq k_0 T({\bf X}_0)^{2/3}\big) > 1 - \delta.
\end{equation}
Corollary \ref{survivalcor} applied to the function $F(\nu) = \P_{\nu}^{\rho}(|\zeta - T(\nu)| \leq k_0 T(\nu)^{2/3})$, combined with \eqref{zetaT0} gives that for sufficiently small $\eps$,
\begin{equation}\label{zetaTscond}
\P_{\mathcal{D}^{\rho}}^{\rho}\big(|(\zeta - s) - T({\bf X}_s)| \leq k_0 T({\bf X}_s)^{2/3} \,|\, \zeta > s\big) > 1 - \delta.
\end{equation}
It also follows from Corollary \ref{survivalcor} that
\begin{equation}\label{earlydeath}
\lim_{\eps \rightarrow 0} \P^{\rho}_{{\cal D}^{\rho}}(\zeta > s) = 1,
\end{equation}
so \eqref{zetaTscond} implies that for sufficiently small $\eps$,
\begin{equation}\label{zetaTs}
\P_{\mathcal{D}^{\rho}}^{\rho}\big(|(\zeta - s) - T({\bf X}_s)| \leq k_0 T({\bf X}_s)^{2/3}\big) > 1 - 2 \delta.
\end{equation}
Therefore, combining (\ref{zetaT0}) and (\ref{zetaTs}) gives that for sufficiently small $\eps$,
$$\P_{\mathcal{D}^{\rho}}^{\rho}\big( |T({\bf X}_0) - T({\bf X}_s)| \leq s + k_0 T({\bf X}_0)^{2/3} + k_0 T({\bf X}_s)^{2/3} \big) > 1 - 3 \delta.$$
It now follows from Lemma \ref{T0upper} that under $\P^{\rho}_{{\cal D}^{\rho}}$, we have
$T({\bf X}_0)/T({\bf X}_s) \rightarrow_p 1$ as $\eps \rightarrow 0$, and therefore
$$\frac{L_{T({\bf X}_0)}(s)}{L_{T({\bf X}_s)}} = \bigg( \frac{T({\bf X}_0) - s}{T({\bf X}_0)} \cdot \frac{T({\bf X}_0)}{T({\bf X}_s)} \bigg)^{1/3} \rightarrow_p 1.$$
Therefore, the equations \eqref{Mprob} and \eqref{Nprob} imply
$$\frac{\log N({\bf X}_s)}{L_{T({\bf X}_s)}} \rightarrow_p \sqrt{2}, \qquad \frac{M({\bf X}_s)}{L_{T({\bf X}_s)}} \rightarrow_p 1.$$
It now follows from \eqref{earlydeath} and the quasi-stationarity established in Corollary \ref{survivalcor} that under $\P^{\rho}_{{\cal D}^{\rho}}$,
$$\frac{\log N({\bf X}_0)}{L_{T({\bf X}_0)}} \rightarrow_p \sqrt{2}, \qquad \frac{M({\bf X}_0)}{L_{T({\bf X}_0)}} \rightarrow_p 1.$$
Also, under $\P^{\rho}_{{\cal D}^{\rho}}$, we have $\zeta/T({\bf X}_0) \rightarrow_p 1$ as $\eps \rightarrow 0$ by Lemma \ref{T0upper} and Lemma \ref{zetaTlemma}, so
$$\frac{\log N({\bf X}_0)}{c \zeta^{1/3}} \rightarrow_p \sqrt{2}, \qquad \frac{M({\bf X}_0)}{c \zeta^{1/3}} \rightarrow_p 1.$$
Corollary \ref{survivalcor} implies that $\eps \zeta \Rightarrow V/\sqrt{2}$ as $\eps \rightarrow 0$ under ${\cal D}^{\rho}$, so
we have the joint convergence
$$\bigg( \eps \zeta, \eps^{1/3} \log N({\bf X}_0), \eps^{1/3} M({\bf X}_0) \bigg) \Rightarrow \bigg( \frac{1}{\sqrt{2}} V, \frac{\sqrt{2} c}{2^{1/6}} V^{1/3}, \frac{c}{2^{1/6}} V^{1/3} \bigg),$$
which is equivalent to the statement of Theorem \ref{NandM}.
\end{proof}

\subsection{Moment Calculations}\label{momproof}

In this section, we prove Lemma \ref{momlem}.  Recall that we are considering the process in which particles at time $s$ are killed when they reach either $0$ or $L_t(s)$.  The calculations in this section are similar to those in section 3.4 in \cite{bbs15} but use ideas from \cite{ms22}.

\begin{proof}[Proof of \eqref{mom1}]
Using Lemma \ref{densitybound}, we have
\begin{align*}
\E^{\rho, t}_x \bigg[ \sum_{u \in {\cal N}^+_s} f(X_s(u)) \bigg] &= \int_0^{L_t(s)} q_s^{\rho, t}(x,y) f(y) \: dy \\
&\sim \int_0^{L_t(s)} \frac{2}{L_t} e^{\sqrt{2} (L_t(s) - L_t)} e^{\sqrt{2} x} \sin \bigg( \frac{\pi x}{L_t} \bigg) e^{-\sqrt{2} y} \sin \bigg( \frac{\pi y}{L_t(s)} \bigg) f(y) \: dy.
\end{align*}
Because $L_t(s) \rightarrow \infty$ as $t \rightarrow \infty$ and $\sin(z) \sim z$ as $z \rightarrow 0$, we have
\begin{equation}\label{intesin}
\int_0^{L_t(s)} e^{-\sqrt{2} y} \sin \bigg( \frac{\pi y}{L_t(s)} \bigg) f(y) dy \sim \frac{\pi}{L_t(s)} \int_0^{\infty} y e^{-\sqrt{2} y} f(y) \: dy.
\end{equation}
Noting also that $L_t \sim L_t(s)$ because $s \ll t$, and recalling \eqref{zdef}, it follows that
\begin{align}\label{expf}
\E^{\rho, t}_x \bigg[ \sum_{u \in {\cal N}^+_s} f(X_s(u)) \bigg] &\sim \frac{\pi}{L_t^2} e^{\sqrt{2} (L_t(s) - L_t)} e^{\sqrt{2} x} \sin \bigg( \frac{\pi x}{L_t} \bigg) \int_0^{\infty} 2y e^{-\sqrt{2} y} f(y) \: dy \nonumber \\
&= \frac{\pi}{\sqrt{2} L_t^3} e^{\sqrt{2} L_t(s)} z(x, L_t) \int_0^{\infty} 2y e^{-\sqrt{2} y} f(y) \: dy.
\end{align}
Summing over all particles in $\mathcal{N}_0$ now gives the result.
\end{proof}

\begin{proof}[Proof of \eqref{mom2}]
Using Lemma \ref{densitybound} and then recalling (\ref{zdef}), we have
\begin{align*}
&\E^{\rho, t}_x \bigg[ \sum_{u \in {\cal N}^+_s} e^{\sqrt{2} X_s(u)} g \bigg(\frac{X_s(u)}{L_t(s)} \bigg) \bigg] \\
&\qquad = \int_0^{L_t(s)} q_s^{\rho, t}(x,y) e^{\sqrt{2} y} g \bigg( \frac{y}{L_t(s)} \bigg) \: dy \\
&\qquad \sim \int_0^{L_t(s)} \frac{2}{L_t} e^{\sqrt{2} (L_t(s) - L_t)} e^{\sqrt{2} x} \sin \bigg( \frac{\pi x}{L_t} \bigg) \sin \bigg( \frac{\pi y}{L_t(s)} \bigg) g \bigg( \frac{y}{L_t(s)} \bigg) \: dy \\
&\qquad = \frac{\sqrt{2}}{L_t^2} e^{\sqrt{2} L_t(s)} z(x, L_t) \int_0^{L_t(s)} \sin \bigg( \frac{\pi y}{L_t(s)} \bigg) g \bigg( \frac{y}{L_t(s)} \bigg) \: dy.
\end{align*}
Making the substitution $v = y/L_t(s)$, we get
$$\E^{\rho, t}_x \bigg[ \sum_{u \in {\cal N}^+_s} e^{\sqrt{2} X_s(u)} g \bigg(\frac{X_s(u)}{L_t(s)} \bigg) \bigg] \sim \frac{\sqrt{2} L_t(s)}{L_t^2} e^{\sqrt{2} L_t(s)} z(x, L_t) \int_0^1 \sin(\pi y) g(y) \: dy.$$
We can now obtain \eqref{mom2} by recalling that $L_t \sim L_t(s)$ and then summing over all particles in $\mathcal{N}_0$.
\end{proof}

\begin{proof}[Proof of \eqref{mom3}]
Standard second moment calculations, which go back to \cite{inw69}, give
\begin{align*}
\Var^{\rho, t}_x \bigg( \sum_{u \in {\cal N}^+_s} f(X_s(u)) \bigg) &\leq \E^{\rho, t}_x \bigg[ \bigg( \sum_{u \in {\cal N}^+_s} f(X_s(u)) \bigg)^2 \bigg] \\
&= \int_0^{L_t(s)} q^{\rho, t}_s(x,y) f(y)^2 \: dy \\
&\qquad + 2 \int_0^s \int_0^{L_t(r)} q^{\rho, t}_r(x,z) \bigg( \int_0^{L_t(s)} q^{\rho, t}_{r,s}(z,y) f(y) \: dy \bigg)^2 \: dz \: dr.
\end{align*}
Because $f$ is bounded, it follows that
\begin{align}\label{var2terms}
&\Var^{\rho, t}_x \bigg( \sum_{u \in {\cal N}^+_s} f(X_s(u)) \bigg) \nonumber \\
&\qquad \lesssim \int_0^{L_t(s)} q^{\rho, t}_s(x,y) \: dy + \int_0^s \int_0^{L_t(r)} q^{\rho, t}_r(x,z) \bigg( \int_0^{L_t(s)} q^{\rho, t}_{r,s}(z,y) \: dy \bigg)^2 \: dz \: dr.
\end{align}
The first term is the same as the expectation in \eqref{expf} when $f(y) = 1$ for all $y$, so
\begin{equation}\label{term1bound}
\int_0^{L_t(s)} q_s(x,y) \: dy \lesssim \frac{e^{\sqrt{2} L_t(s)}}{L_t^3} z(x, L_t).
\end{equation}

The second term is more involved, and we will separately consider the cases $0 < r \leq s - t^{2/3}$ and $s - t^{2/3} < r < s$.  Suppose first that $0 \leq r \leq s - t^{2/3}$.  Note that our assumptions imply that $L_t(r) \asymp L_t$ and that our assumptions, along with \eqref{qGirsanov}, imply that $q_r^{\rho, t}(x,z) \lesssim q_r^{\sqrt{2}, t}(x,z)$.  Therefore, using \eqref{qrs}, we get
$$q_r^{\rho, t}(x,z) \lesssim \frac{1}{L_t} e^{\sqrt{2}(x - z)} w_{\tau_t(0,r)} \bigg( \frac{x}{L_t}, \frac{z}{L_t(r)} \bigg).$$
Also, recalling the definition of $\tau_t(r,s)$ from \eqref{taudef}, we have $\tau_t(r,s) \gtrsim 1$.  Therefore, using \eqref{wseq}, \eqref{dbound}, \eqref{taudef}, \eqref{qrs}, and \eqref{qGirsanov}, and using \eqref{intesin} for the last inequality, we get
\begin{align}\label{innerint}
\int_0^{L_t(s)} q_{r,s}^{\rho, t}(z,y) \: dy &\lesssim \int_0^{L_t(s)} \frac{1}{L_t} e^{\sqrt{2}(z - y)} e^{-\pi^2 \tau_t(r,s)/2} \sin \bigg( \frac{\pi z}{L_t(r)} \bigg) \sin \bigg( \frac{\pi y}{L_t(s)} \bigg) \: dy \nonumber \\
&= \frac{1}{L_t} e^{\sqrt{2} z} \sin \bigg( \frac{\pi z}{L_t(r)} \bigg) e^{\sqrt{2}(L_t(s) - L_t(r))} \int_0^{L_t(s)} e^{-\sqrt{2} y} \sin \bigg( \frac{\pi y}{L_t(s)} \bigg) \: dy \nonumber \\
&\lesssim \frac{1}{L_t^2} e^{\sqrt{2} z} \sin \bigg( \frac{\pi z}{L_t(r)} \bigg) e^{\sqrt{2}(L_t(s) - L_t(r))}.
\end{align}
Therefore,
\begin{align*}
&\int_0^{s - t^{2/3}} \int_0^{L_t(r)} q^{\rho, t}_r(x,z) \bigg( \int_0^{L_t(s)} q^{\rho, t}_{r,s}(z,y) \: dy \bigg)^2 \: dz \: dr \\
&\quad\lesssim \frac{e^{\sqrt{2} x} e^{2 \sqrt{2} L_t(s)}}{L_t^5} \int_0^{s - t^{2/3}} e^{-2 \sqrt{2} L_t(r)} \int_0^{L_t(r)} e^{\sqrt{2} z} \sin \bigg( \frac{\pi z}{L_t(r)} \bigg)^2 w_{\tau_t(0,r)} \bigg( \frac{x}{L_t}, \frac{z}{L_t(r)} \bigg) \: dz \: dr.
\end{align*}
Making the substitution $v = L_t(r) - z$ and using the bound $\sin(\pi(L_t(r) - v)/L_t(r)) \leq \pi v/L_t(r)$, we get
\begin{align*}
&\int_0^{s - t^{2/3}} \int_0^{L_t(r)} q^{\rho, t}_r(x,z) \bigg( \int_0^{L_t(s)} q^{\rho, t}_{r,s}(z,y) \: dy \bigg)^2 \: dz \: dr \\
&\quad\lesssim \frac{e^{\sqrt{2} x} e^{2 \sqrt{2} L_t(s)}}{L_t^5} \int_0^{s - t^{2/3}} \frac{e^{-\sqrt{2} L_t(r)}}{L_t(r)^2} \int_0^{L_t(r)} e^{-\sqrt{2} v} v^2 w_{\tau_t(0,r)} \bigg( \frac{x}{L_t}, \frac{L_t(r) - v}{L_t(r)} \bigg) \: dv \: dr \\
&\quad= \frac{e^{\sqrt{2} (x - L_t)} e^{2 \sqrt{2} L_t(s)}}{L_t^5} \int_0^{s - t^{2/3}} \frac{e^{\pi^2\tau_t(0,r)/2}}{L_t(r)^2} \int_0^{L_t(r)} e^{-\sqrt{2} v} v^2 w_{\tau_t(0,r)} \bigg( \frac{x}{L_t}, \frac{L_t(r) - v}{L_t(r)} \bigg) \: dv \: dr. \\
\end{align*}
Because $w_s(x,y) = w_s(1-x, 1-y)$ by symmetry, this expression matches, up to a scaling by $\sqrt{2}$ and up to a factor of $e^{2\sqrt{2} L_t(s)}/L_t^6$, with equation (5.33) in \cite{ms22} in the case when $A = 0$, $r = 0$ and $s=s-t^{2/3}$.  Equation (5.33) in \cite{ms22} is bounded by the sum of the expressions in (5.35) and (5.36) in \cite{ms22}.  Therefore, by following the same reasoning as on pages 967-968 of \cite{ms22} or by simply applying that result, we obtain
\begin{align}\label{term2Abound}
&\int_0^{s - t^{2/3}} \int_0^{L_t(r)} q^{\rho, t}_r(x,z) \bigg( \int_0^{L_t(s)} q^{\rho, t}_{r,s}(z,y) \: dy \bigg)^2 \: dz \: dr \nonumber \\
&\qquad \qquad \lesssim \frac{e^{2 \sqrt{2} L_t(s)}}{L_t^6} \bigg( \frac{\tau_t(0, s - t^{2/3})}{L_t} z(x, L_t) + y(x, L_t) \bigg) \nonumber \\
&\qquad \qquad \lesssim \frac{e^{2 \sqrt{2} L_t(s)}}{L_t^6} \bigg( \frac{s}{t} z(x, L_t) + y(x, L_t) \bigg).
\end{align}

It remains to consider the case when $s - t^{2/3} < r < s$.  Using Lemma \ref{densitybound} to bound $q^{\rho, t}_r(x,z)$ and using \eqref{qrs} to bound $q^{\rho, t}_{r,s}(z,y)$, and then using that $L_t(r) - L_t(s) \lesssim 1$, we get
\begin{align}\label{term2Bprelim}
&\int_{s - t^{2/3}}^s \int_0^{L_t(r)} q^{\rho, t}_r(x,z) \bigg( \int_0^{L_t(s)} q^{\rho, t}_{r,s}(z,y) \: dy \bigg)^2 \: dz \nonumber \\
&\qquad \lesssim \int_{s - t^{2/3}}^s \int_0^{L_t(r)} \frac{1}{L_t} e^{\sqrt{2} (L_t(r) - L_t)} e^{\sqrt{2} x} \sin \bigg( \frac{\pi x}{L_t} \bigg) e^{-\sqrt{2} z} \sin \bigg( \frac{\pi z}{L_t(r)} \bigg) \nonumber \\
&\qquad \qquad \times \bigg( \int_0^{L_t(s)} \frac{1}{L_t(r)} e^{\sqrt{2}(z - y)} w_{\tau_t(r,s)} \bigg( \frac{z}{L_t(r)}, \frac{y}{L_t(s)} \bigg) \: dy \bigg)^2 \: dz \: dr \nonumber \\
&\qquad \lesssim \frac{e^{\sqrt{2} L_t(s)}}{L_t^2} z(x, L_t) \int_{s - t^{2/3}}^s \frac{1}{L_t(r)^2} \nonumber \\
&\qquad \qquad \times \int_0^{L_t(r)} e^{\sqrt{2} z} \sin \bigg( \frac{\pi z}{L_t(r)} \bigg)\bigg( \int_0^{L_t(s)} e^{-\sqrt{2} y} w_{\tau_t(r,s)} \bigg( \frac{z}{L_t(r)}, \frac{y}{L_t(s)} \bigg) \: dy \bigg)^2 \: dz \: dr.
\end{align}
We now divide the integral on the last line of \eqref{term2Bprelim} into three pieces.  Because $e^{-\sqrt{2} y} \leq 1$ and $\int_0^1 w_s(x,y) \: dy \leq 1$, the inner integral in \eqref{term2Bprelim} is bounded above by $L_t(s)$.  Therefore,
\begin{align}\label{2Ba}
&\int_0^{\frac{1}{2} L_t(r)} e^{\sqrt{2} z} \sin \bigg( \frac{\pi z}{L_t(r)} \bigg)\bigg( \int_0^{L_t(s)} e^{-\sqrt{2} y} w_{\tau_t(r,s)} \bigg( \frac{z}{L_t(r)}, \frac{y}{L_t(s)} \bigg) \: dy \bigg)^2 \: dz \nonumber \\
&\qquad \leq L_t(s)^2 \int_0^{\frac{1}{2} L_t(r)} e^{\sqrt{2} z} \sin \bigg( \frac{\pi z}{L_t(r)} \bigg) \: dr \nonumber \\
&\qquad \leq e^{\frac{1}{2} \sqrt{2} L_t(r)} L_t(r) L_t(s)^2 \nonumber \\
&\qquad \lesssim e^{\frac{1}{2} \sqrt{2} L_t(s)} L_t^3.
\end{align}
Likewise,
\begin{align}\label{2Bb}
&\int_{\frac{1}{2} L_t(r)}^{L_t(r)} e^{\sqrt{2} z} \sin \bigg( \frac{\pi z}{L_t(r)} \bigg)\bigg( \int_{\frac{1}{4}L_t(s)}^{L_t(s)} e^{-\sqrt{2} y} w_{\tau_t(r,s)} \bigg( \frac{z}{L_t(r)}, \frac{y}{L_t(s)} \bigg) \: dy \bigg)^2 \: dz \nonumber \\
&\qquad \leq e^{-\frac{1}{2} \sqrt{2} L_t(s)} L_t(s)^2 \int_{\frac{1}{2} L_t(r)}^{L_t(r)} e^{\sqrt{2} z} \sin \bigg( \frac{\pi z}{L_t(r)} \bigg) \: dz \nonumber \\
&\qquad \leq e^{-\frac{1}{2} \sqrt{2} L_t(s)} L_t(s)^2 e^{\sqrt{2} L_t(r)} L_t(r) \nonumber \\
&\qquad \lesssim e^{\frac{1}{2} \sqrt{2} L_t(s)} L_t^3.
\end{align}
It remains to consider the case when $z > \frac{1}{2} L_t(r)$ and $y < \frac{1}{4} L_t(s)$.  We will need a sharper bound on $w_s(x,y)$.  We can bound $w_s(x,y)$ by the density for Brownian motion killed only at zero, which, as in (5.10) in \cite{ms22}, leads to $$w_s(x,y) \lesssim \frac{xy}{s^{3/2}} e^{-(x-y)^2/2s}.$$  Also, using an idea from the proof of Lemma 5.1 in \cite{ms22} and noting that, for all $z \in (0,1)$, either $|x-z|$ or $|y-z|$ must be at least as large as $|x-y|/2$, we have
\begin{align*}
w_s(x,y) &= \int_0^1 w_{s/2}(x,z) w_{s/2}(z,y) \: dz \\
&= \int_0^1 w_{s/2}(1-x, 1-z) w_{s/2}(z,y) \: dz \\
&\lesssim \int_0^1 \frac{(1-x)(1-z)}{s^{3/2}} e^{-(x-z)^2/s} \cdot \frac{zy}{s^{3/2}} e^{-(y-z)^2/s} \: dz \\
&\lesssim \int_0^1 \frac{(1-x)y}{s^3} e^{-(x-y)^2/4s} \: dz \\
&= \frac{(1-x)y}{s^3} e^{-(x-y)^2/4s}.
\end{align*}
Therefore,
\begin{align}\label{2Bc}
&\int_{\frac{1}{2} L_t(r)}^{L_t(r)} e^{\sqrt{2} z} \sin \bigg( \frac{\pi z}{L_t(r)} \bigg)\bigg( \int_0^{\frac{1}{4}L_t(s)} e^{-\sqrt{2} y} w_{\tau_t(r,s)} \bigg( \frac{z}{L_t(r)}, \frac{y}{L_t(s)} \bigg) \: dy \bigg)^2 \: dz \nonumber \\
&\qquad \lesssim \int_{\frac{1}{2} L_t(r)}^{L_t(r)} e^{\sqrt{2} z} \sin \bigg( \frac{\pi z}{L_t(r)} \bigg)\bigg( \int_0^{\frac{1}{4}L_t(s)} e^{-\sqrt{2} y} \frac{(L_t(r) - z) y}{L_t(r) L_t(s) \tau_t(r,s)^3} e^{-1/(64 \tau_t(r,s))} \: dy \bigg)^2 \: dz \nonumber \\
&\qquad \lesssim \frac{e^{\sqrt{2} L_t(r)}}{L_t(r)^3 L_t(s)^2} \cdot \frac{1}{\tau_t(r,s)^6} e^{-1/(32 \tau_t(r,s))} \nonumber \\
&\qquad \lesssim \frac{e^{\sqrt{2} L_t(s)}}{L_t^5} \cdot \frac{1}{\tau_t(r,s)^6} e^{-1/(32 \tau_t(r,s))}.
\end{align}
Therefore, using that $(x + y)^2 \leq 2(x^2 + y^2)$ and plugging the results of \eqref{2Ba}, \eqref{2Bb}, and \eqref{2Bc} into \eqref{term2Bprelim},
\begin{align*}
&\int_{s - t^{2/3}}^s \int_0^{L_t(r)} q^{\rho, t}_r(x,z) \bigg( \int_0^{L_t(s)} q^{\rho, t}_{r,s}(z,y) \: dy \bigg)^2 \: dz \\
&\quad \lesssim \frac{e^{\sqrt{2} L_t(s)}}{L_t^2} z(x, L_t) \int_{s - t^{2/3}}^s \frac{1}{L_t(r)^2} \bigg( e^{\frac{1}{2} \sqrt{2} L_t(s)} L_t^3 + \frac{e^{\sqrt{2} L_t(s)}}{L_t^5} \cdot \frac{1}{\tau_t(r,s)^6} e^{-1/(32 \tau_t(r,s))} \bigg) \: dr.
\end{align*}
Now make the substitution $u = \tau_t(r,s)$, so that $du/dr = - L_t(r)^{-2}$.  Because $\tau_t(s - t^{2/3}, s)\lesssim 1$, we have
\begin{align}\label{term2Bbound}
&\int_{s - t^{2/3}}^s \int_0^{L_t(r)} q^{\rho, t}_r(x,z) \bigg( \int_0^{L_t(s)} q^{\rho, t}_{r,s}(z,y) \: dy \bigg)^2 \: dz \nonumber \\
&\quad \lesssim \frac{e^{\sqrt{2} L_t(s)}}{L_t^2} z(x, L_t) \int_0^{\tau_t(s - t^{2/3}, s)} \bigg( e^{\frac{1}{2} \sqrt{2} L_t(s)} L_t^3 + \frac{e^{\sqrt{2} L_t(s)}}{L_t^5} \cdot \frac{1}{u^6} e^{-1/(32 u)} \bigg) \: du \nonumber \\
&\quad \lesssim \frac{e^{\sqrt{2} L_t(s)}}{L_t^2} z(x, L_t) \bigg( e^{\frac{1}{2} \sqrt{2} L_t(s)} L_t^3 + \frac{e^{\sqrt{2} L_t(s)}}{L_t^5} \int_0^{\infty} \frac{1}{u^6} e^{-1/(32u)} \: du \bigg). \nonumber \\
&\quad \lesssim \frac{e^{2 \sqrt{2} L_t(s)}}{L_t^7} z(x, L_t).
\end{align}
From \eqref{var2terms}, \eqref{term1bound}, \eqref{term2Abound}, and \eqref{term2Bbound}, we get
$$\Var^{\rho, t}_x \bigg( \sum_{u \in {\cal N}^+_s} f(X_s(u)) \bigg) \lesssim \frac{e^{2 \sqrt{2} L_t(s)}}{L_t^6} \bigg( \frac{s}{t} z(x, L_t) + \frac{1}{L_t} z(x, L_t) + y(x, L_t) \bigg).$$  The result \eqref{mom3} now follows by summing the contributions from all particles in ${\cal N}_0$.
\end{proof}

\begin{proof}[Proof of \eqref{mom4}]
Reasoning as in the beginning of the proof of \eqref{mom3}, we have
\begin{align}\label{varg2}
&\Var_x^{\rho, t} \bigg( \sum_{u \in {\cal N}^+_s} e^{\sqrt{2} X_s(u)} g \bigg( \frac{X_s(u)}{L_t(s)} \bigg) \bigg) \nonumber \\
&\qquad \lesssim \int_0^{L_t(s)} e^{2 \sqrt{2} y} q_s^{\rho, t}(x,y) \: dy + \int_0^s \int_0^{L_t(r)} q_r^{\rho, t}(x,z) \bigg( \int_0^{L_t(s)} e^{\sqrt{2} y} q_{r,s}^{\rho, t}(z,y) \: dy \bigg)^2 \: dz \: dr.
\end{align}
To bound the first term, we use Lemma \ref{densitybound} to get
\begin{align}\label{term1g}
\int_0^{L_t(s)} e^{2 \sqrt{2} y} q_s^{\rho, t}(x,y) \: dy &\lesssim \frac{1}{L_t} e^{\sqrt{2}(L_t(s) - L_t)} e^{\sqrt{2} x} \sin \bigg( \frac{\pi x}{L_t} \bigg) \int_0^{L_t(s)} e^{\sqrt{2}y} \sin \bigg( \frac{\pi y}{L_t(s)} \bigg) \: dy \nonumber \\
&\lesssim \frac{e^{\sqrt{2} L_t(s)}}{L_t^2} z(x, L_t) \int_0^{L_t(s)} e^{\sqrt{2} y} \sin \bigg( \frac{\pi y}{L_t(s)} \bigg) \: dy \nonumber \\
&\lesssim \frac{e^{2 \sqrt{2} L_t(s)}}{L_t^3} z(x, L_t).
\end{align}
As in the proof of \eqref{mom3}, we will break the second term into two pieces.  Consider first the case when $0 < r \leq s - t^{2/3}$.  Reasoning as in \eqref{innerint}, we get
\begin{align*}
\int_0^{L_t(s)} e^{\sqrt{2} y} q_{r,s}^{\rho, t}(z,y) \: dy &\lesssim \frac{1}{L_t} e^{\sqrt{2} z} \sin \bigg( \frac{\pi z}{L_t(r)} \bigg) e^{\sqrt{2}(L_t(s) - L_t(r))} \int_0^{L_t(s)} \sin \bigg( \frac{\pi y}{L_t(s)} \bigg) \: dy \nonumber \\
&\lesssim e^{\sqrt{2} z} \sin \bigg( \frac{\pi z}{L_t(r)} \bigg) e^{\sqrt{2}(L_t(s) - L_t(r))}.
\end{align*}
Note that this expression is the expression on the right-hand side of \eqref{innerint}, multiplied by $L_t^2$.  Because this inner integral is squared, our final answer can be obtained by multiplying the right-hand side of \eqref{term2Abound} by $L_t^4$.  That is, we have
\begin{align}\label{term2Ag}
&\int_0^{s - t^{2/3}} \int_0^{L_t(r)} q_r^{\rho, t}(x,z) \bigg( \int_0^{L_t(s)} e^{\sqrt{2} y} q_{r,s}^{\rho, t}(z,y) \: dy \bigg)^2 \: dz \: dr \nonumber \\
&\qquad \qquad \qquad \qquad \lesssim \frac{e^{2 \sqrt{2} L_t(s)}}{L_t^2} \bigg( \frac{s}{t} z(x, L_t) + y(x, L_t) \bigg).
\end{align}
We next consider the case when $s - t^{2/3} < r < s$.   Write
$$I = \int_{s - t^{2/3}}^s \int_0^{L_t(r)} q^{\rho, t}_r(x,z) \bigg( \int_0^{L_t(s)} e^{\sqrt{2} y} q^{\rho, t}_{r,s}(z,y) \: dy \bigg)^2 \: dz$$
Reasoning as in \eqref{term2Bprelim}, we get
\begin{align}\label{prelimg}
I &\lesssim \frac{e^{\sqrt{2} L_t(s)}}{L_t^2} z(x, L_t) \int_{s - t^{2/3}}^s \frac{1}{L_t(r)^2} \nonumber \\
&\qquad \qquad \times \int_0^{L_t(r)} e^{\sqrt{2} z} \sin \bigg( \frac{\pi z}{L_t(r)} \bigg)\bigg( \int_0^{L_t(s)} w_{\tau_t(r,s)} \bigg( \frac{z}{L_t(r)}, \frac{y}{L_t(s)} \bigg) \: dy \bigg)^2 \: dz \: dr.
\end{align}
We now make the substitutions $v = L_t(r) - z$ and $q = 1 - y/L_t(s)$ to rewrite the integral on the last line of \eqref{prelimg} as
$$e^{\sqrt{2} L_t(r)} L_t(s)^2 \int_0^{L_t(r)} e^{-\sqrt{2} v} \sin \bigg( \frac{\pi (L_t(r) - v)}{L_t(r)} \bigg) \bigg( \int_0^1 w_{\tau_t(r,s)} \bigg( 1 - \frac{v}{L_t(r)}, 1 - q \bigg) \: dq \bigg)^2 \: dv.$$
We now use the bound $\sin(\pi(L_t(r) - v)/L_t(r)) \leq \pi v/L_t(s)$ along with the identity $w_s(x,y) = w_s(1-x, 1-y)$ and the fact that $e^{\sqrt{2} L_t(r)} \asymp e^{\sqrt{2} L_t(s)}$ when $s - t^{2/3} < r < s$ to get
\begin{align*}
I \lesssim \frac{e^{2 \sqrt{2} L_t(s)}}{L_t} z(x, L_t) \int_{s - t^{2/3}}^s \frac{1}{L_t(r)^2} \int_0^{L_t(r)} e^{-\sqrt{2} v} v \bigg( \int_0^1 w_{\tau_t(r,s)} \bigg( \frac{v}{L_t(r)}, q \bigg) \: dq \bigg)^2 \: dv \: dr.
\end{align*}
Note that $\int_0^1 w_s(x,y) \: dy$ is bounded above by the probability that Brownian motion, started at $x$, does not hit the origin before time $s$.  This probability is bounded above by a constant multiple of $s^{-1/2} x \wedge 1$.  It follows that
$$I \lesssim \frac{e^{2 \sqrt{2} L_t(s)}}{L_t} z(x, L_t) \int_{s - t^{2/3}}^s \frac{1}{L_t(r)^2} \int_0^{L_t(r)} e^{-\sqrt{2} v} v \bigg( \frac{v^2}{L_t(r)^2 \tau_t(r,s)} \wedge 1 \bigg) \: dv \: dr.$$
Next, we make the substitution $u = \tau_t(r,s)$, so that $du/dr = - L_t(r)^{-2}$.  Then recalling that $\tau_t(s - t^{2/3}, s) \lesssim 1$ and noting that $L_t(r) \asymp L_t$, we get
\begin{align*}
I &\lesssim \frac{e^{2 \sqrt{2} L_t(s)}}{L_t} z(x, L_t) \int_0^{\tau_t(s - t^{2/3}, s)} \int_0^{\infty} e^{-\sqrt{2} v} v \bigg( \frac{v^2}{L_t^2 u} \wedge 1 \bigg) \: dv \: du \\
&= \frac{e^{2 \sqrt{2} L_t(s)}}{L_t^3} z(x, L_t) \int_0^{\infty} e^{-\sqrt{2} v} v \int_0^{\tau_t(s - t^{2/3}, s)} \bigg( \frac{v^2}{u} \wedge L_t^2 \bigg) \: dr \: du \\
&= \frac{e^{2 \sqrt{2} L_t(s)}}{L_t^3} z(x, L_t) \int_0^{\infty} e^{-\sqrt{2} v} v \bigg( \int_0^{(v/L_t)^2} L_t^2 \: du + \int_{(v/L_t)^2}^{\tau_t(s - t^{2/3}, s)} \frac{v^2}{u} \: du \bigg) \: dv \\
&\lesssim \frac{e^{2 \sqrt{2} L_t(s)}}{L_t^3} z(x, L_t) \int_0^{\infty} e^{-\sqrt{2} v} v \bigg( v^2 + v^2 \log(L_t/v) \bigg) \: dv \\
&\lesssim \frac{e^{2 \sqrt{2} L_t(s)} \log L_t}{L_t^3} z(x, L_t).
\end{align*}
Combining this bound with \eqref{varg2}, \eqref{term1g}, and \eqref{term2Ag} gives \eqref{mom4}.
\end{proof}

\paragraph{Acknowledgments.}
This research was carried out in part during the workshop \emph{Branching systems, reaction-diffusion equations and population models}, held in Montreal in May, 2022.  We thank the organizers of this conference. In particular, BM acknowledges partial support by funding from the Simons Foundation and the Centre de Recherches Mathématiques, through the Simons-CRM scholar-in-residence program.

\end{document}